\DeclareSymbolFont{rsfscript}{OMS}{rsfs}{m}{n}
\DeclareSymbolFontAlphabet{\mathrsfs}{rsfscript}
\DeclareMathOperator{\id}{\mathbbm{1}}
\DeclareMathOperator{\m}{\mbox{-}}
\DeclareMathOperator{\lcm}{lcm}
\DeclareMathOperator{\Sch}{Sch}
\DeclareMathOperator{\St}{Stab}
\DeclareSymbolFont{rsfscript}{OMS}{rsfs}{m}{n}
\DeclareSymbolFont{bbold}{U}{bbold}{m}{n}
\DeclareSymbolFontAlphabet{\mathbbold}{bbold}
\newtheorem{theorem}{Theorem}
\newtheorem{proposition}{Proposition}
\newtheorem{definition}{Definition}
\newtheorem{lemma}{Lemma}
\newtheorem{corollary}{Corollary}
\newtheorem{prob}{Problem}
\newcommand{\rf}{\rightarrow}
\newcommand{\ul}{\underline}
\newcommand{\wt}{\widetilde}
\newcommand{\inv}{^{-1}}
\newcommand{\oo}{\overline}
\def\vlongrightarrow{\relbar\joinrel\longrightarrow}
\def\vvlongrightarrow{\relbar\joinrel\vlongrightarrow}
\def\vvvlongrightarrow{\relbar\joinrel\vvlongrightarrow}
\def\vvvvlongrightarrow{\relbar\joinrel\vvvlongrightarrow}
\def\vlongmapright#1{\smash{\mathop{\vvlongrightarrow}\limits^{#1}}}
\newcommand{\longfr}[2]{\smash{\stackrel{\text{\tiny{$#1|#2$}}}{\vlongrightarrow}}}
\newcommand{\vlongfr}[2]{\smash{\stackrel{\text{\tiny{$#1|#2$}\,}}{\vvlongrightarrow}}}
\newcommand{\vvlongfr}[2]{\smash{\stackrel{\text{\tiny{$#1|#2$}\,}}{\vvvlongrightarrow}}}
\newcommand{\vvvlongfr}[2]{\smash{\stackrel{\text{\tiny{$#1|#2$}\,}}{\vvvvlongrightarrow}}}
\newcommand{\mapright}[1]{\smash{\stackrel{\text{\tiny{$#1$}}}{\vlongrightarrow}}}
\newcommand{\Pth}[1]{\smash{\stackrel{\text{\tiny{$#1$}}}{\longrightarrow}}}
\title{Freeness of automata groups vs boundary dynamics}
\author{Daniele D'Angeli\\
        Institut f\"{u}r Mathematische Strukturtheorie (Math C)\\
Technische Universit\"{a}t Graz\\
Steyrergasse 30, 8010 Graz, Austria.\\
        \texttt {dangeli@math.tugraz.at}
        \and
       	Emanuele Rodaro \\
        Centro de Matem\'atica, University of Porto\\
        Rua do Campo Alegre, 687, Porto, 4169-007, Portugal.\\				
        \texttt{emanuele.rodaro@fc.up.pt}
        }
\date{\today}
\begin{document}
\maketitle

\begin{abstract}
We prove that the boundary dynamics of the (semi)group generated by the enriched dual transducer characterizes the algebraic property of being free for an automaton group. We specialize this result to the class of bireversible transducers and we show that the property of being not free is equivalent to have a finite Schreier graph in the boundary of the enriched dual pointed on some essentially non-trivial point. From these results we derive some consequences from the dynamical, algorithmic and algebraic point of view. In the last part of the paper we address the problem of finding examples of non-bireversible transducers defining free groups, we show examples of transducers with sink accessible from every state which generate free groups, and, in general, we link this problem to the nonexistence of certain words with interesting combinatorial and geometrical properties.
\end{abstract}

\section{Introduction}
In 1980 R. I. Grigorchuk described the first example of a group of intermediate (i.e. faster
than polynomial and slower than exponential) growth. It later appeared that the most
natural way to study this group is by its action on the rooted binary tree, or to look at its generating automaton (Mealy machine). Over the last decades a new exciting direction of research focusing on finitely generated automata groups acting by automorphisms on rooted trees has been developed. It has proven to have deep connections with the theory of profinite groups and with complex
dynamics. In particular, many groups of this type satisfy a property of self-similarity, reflected on fractalness of some limit objects associated with them \cite{BaGriNe03, BDN, Nekra05}. In the spirit of the modern theory of dynamical systems, one is interested in the action of an automaton group (which is a countable group) on the uncountable set of right infinite sequences somehow endowed with the uniform measure. This action on the boundary seems to be very rich in this context and it is described by the structure of the corresponding Schreier graphs. There are examples of essentially free actions as well as examples of totally non-free actions, but so far no examples of actions that are neither essentially free nor totally non-free on the boundary. Therefore a series of open questions naturally appear in this setting \cite{basilica, GriSa13, Vershik}. One of the most intriguing question is what kind of groups can be generated by these Mealy machines. Among these an interesting family is constituted by finitely presented groups such as free groups and free products of finite groups. Examples of free groups generated by invertible Mealy machines are rare and difficult to obtain, see for instance \cite{GlaMoz05} where the authors claim that: ``Somewhat surprisingly, for example, it is not so easy to generate a free group''. Examples of free groups or free product of groups were found by Vorobets and Vorobets in \cite{VoVo2007,VoVo2010}, by Muntyan and Savchuk in the case of Bellaterra group \cite{Nekra05}, by Glasner and Mozes \cite{GlaMoz05}. In this paper we show a connection between the property of being free for such automata groups with their dynamics on the boundary. Indeed, in Section \ref{sec: freeness vs dynamics} we characterize the freeness of the groups generated by an invertible transducer in term of the dynamics on the boundary of group (semigroup) generated by its enriched dual. We first explore the general case, then we specialize it to the class of bireversible transducers. In the general case we show that being free is equivalent to the absence of finite Schreier (or orbital in the non-reversible case) graphs rooted at periodic points for the group (semigroup) generated by the enriched dual automaton. Subsequently, we consider bireversible transducers; we first show that for this class, enriching a transducer with a structure of inverse transducer does not change the generated group, however, its dynamics is richer and it strongly influences the algebraic structure of the group generated by its dual. In this framework, we spot a class of points in the boundary $(Q\cup Q^{-1})^{\omega}$, called essentially non-trivial, which ``represent'' elements on the Gromov boundary $\widehat{F_{Q}}$ of the free group $F_{Q}$. We show that, independently of the generated group, the Schreier graphs rooted at points in the complement of essentially non-trivial points are always finite. However, the freeness of the group generated by the dual is equivalent to the infiniteness of all the Schreier graphs pointed on essentially non-trivial elements. These characterizations have a series of consequences. For instance, we give an elementary proof (without passing throught Zelmanov's result) that if $G$ is a Burnside automaton group with bounded exponent, then $G$ is finite. Furthermore, we find sufficient conditions on an invertible transducer not to have any elements in the generated semigroup that act like the identity, or equivalently it does not posses ``positive relations'', i.e. relations of the form $u=1$. These semigroups seem to play also an important role in the open problem of looking for non-virtually nilpotent automata groups with all trivial boundary stabilizers (see, \cite{GriSa13}). Even if bireversible transducers generate groups with essentially free actions on the boundary, in all known examples they always exhibit a critical point with a non-trivial stabilizer. In this paper we give some necessary conditions for groups generated by reversible transducers. Indeed, we prove that if a reversible transducers defines a group having all trivial stabilizers in the boundary, then the semigroup of the dual have no ``positive relations'', in particular it is torsion-free, this extends a result in \cite{DaRo14} for the class of reversible invertible transducers. Furthermore, the group generated by its dual transducer does not possess this property of trivial stabilizers in the boundary. In this direction we restrict the scope of the search, indeed we show that the transducers defining these groups may lie both in the class of bireversible transducers, or in the complement of the class of reversible transducers. Moreover, if the first case occurs, then the corresponding group is Burnside. At the end of Section \ref{sec: freeness vs dynamics} we use the previously obtained characterizations to get some consequences from the algorithmic point of view. For instance, the general characterization leads to the equivalence of the problem of determining whether an automaton group is not free with the problem of detecting a finite orbital graph of a (reduced) periodic boundary-point.
\\
In the last section of the paper, we tackle the problem of finding free groups generated by automata with a sink state. Indeed, one common feature of all the aforementioned examples of free automata groups, is the fact that they are all defined by transducers which are bireversible. This leads to ask whether it is possible to generate a free group by means of transducers with a sink state, this case represents, in some sense, the opposite of bireversible transducers. We answer positively to this question by showing how to build an infinite series of Mealy machines with a sink state reachable from every state defining free groups. We show that this reachability condition is equivalent to the existence of $g$-regular elements on the boundary for every element $g$ in the generated group. In this case the resulting free groups do not act transitively on the corresponding tree, so it remains open the question of finding a free group generated by a non-bireversible automaton and acting transitively on the set of words of the same length in the alphabet. In this framework, we propose a combinatorial approach to deal with non-free groups defined by transducers whose sink state is accessible from every state, using certain words which we call fragile. Roughly speaking these are words representing minimal relations such that a small transformation bring them to reduced words. The last part of the paper is devoted to some examples of groups obtained by suitable colorings of the Cayley machines. In particular, we prove that the relations for such groups can be detected by using purely combinatorial properties of the dual automaton. Such observation enables us to study a special class of such machines generating free semigroups, and therefore groups with exponential growth.

\section{Preliminaries}\label{sec: preliminaries}
In the rest of the paper $A$ will denote a finite set, called \emph{alphabet}. A word $w$ over $A$ is a tuple $w=(w_{1},\ldots, w_{n})$ of element of $A$ which is more often represented as a string $w=w_{1}\ldots w_{n}$, and for convenience we will use both the notations freely. The set $A^{+}$ ($A^{*}$) of all finite non-empty words (words) over $A$ has a structure of free semigroup (monoid) on $A$ with respect to the usual operation of concatenation of words (and with identity the empty word $1$). By $A^{\le n}$, $A^{\ge n}$ and $A^{n}$ we denote the set of words of length less then, greater, equal to $n$, respectively. By $A^{\omega}$ we denote the set of words on $A$ infinite to the right, we use the vector notation, and for an element $\ul{u}=u_{1}u_{2}\ldots u_{i}\ldots\in A^{\omega}$ the prefix of length $k>0$ is denoted by $\ul{u}[k]=u_{1}u_{2}\ldots u_{k}$, while the factor $u_{i}\ldots u_{j}$ is denoted by $\ul{u}[i,j]$. By $\tilde{A}=A\cup A\inv$ we denote the \emph{involutive alphabet} where $A\inv$ is the set of \emph{formal} inverses of elements $A$. The operator $\inv\colon A\rightarrow A\inv$ sending $ a\mapsto a\inv$ is extended to an involution on the free monoid $\wt{A}^*$ through
$$
1\inv = 1, \;\; (a\inv)\inv=a, \;\; (uv)\inv=v\inv u\inv\;\;\; (a\in
A;\;u,v\in\wt{A}^*).
$$
Let $\sim$ be the congruence on $\wt{A}^*$ generated by the relation
$\{(aa\inv,1)\mid a \in \wt{A} \}$. The quotient $F_A= \wt{A}^*/\sim$ is the \emph{free group} on $A$, and let $\sigma\colon \wt{A}^* \to F_A$ be the canonical homomorphism. The set of all reduced words on $\wt{A}^*$ may be compactly written as
$$
R_A=\tilde{A}^*\setminus\bigcup_{a\in\tilde{A}}\tilde{A}^*aa\inv\tilde{A}^*
$$
For each $u\in\wt{A}^*$, we denote by $\overline{u}\in R_A$ the (unique) reduced word $\sim$-equivalent to $u$. With a slight abuse in the notation we often identify the elements of $F_A$ with their reduced representatives, i.e. $\sigma(u)=\overline{u}$; this clearly extends to subsets $\sigma(L)=\oo{L}$, $L\subseteq \wt{A}^{*}$.
\\
In this paper we strongly follow the geometric/language theoretic approach to automata groups developed in \cite{DaRo14}, for this reason we treat transducers and automata as labelled digraphs. Using Serre's approach, an $A$-digraph is a tuple $\Gamma=(V,E,A,\iota,\tau,\mu)$, where $V$ is the set of vertices (or states), $E$ is the set of edges (or transitions), $\iota, \tau$ are functions from $E$ into $V$ giving the initial and terminal vertices, respectively, and $\mu:E\rightarrow A$ is the \emph{labeling} map. It is more compact to depict an edge $e\in E$ with $q=\iota(e), q'=\tau(e)$, $\mu(e)=a$ as $e=(q\mapright{a}q')$. A \emph{path} is an ordered sequence of edges $p=e_{1}\ldots e_{k}$ such that $\tau(e_{i})=\iota(e_{i+1})$ for $i=1,\ldots, k-1$, and we say that the origin of $p$ is $\iota(p)=\iota(e_{1})$ and the terminal vertex is $\tau(p)=\tau(e_{k})$. The \emph{label} of the path $p$ is the word $\mu(p)=\mu(e_{1})\ldots \mu(e_{k})$, and we graphically represent this path as $p=(v\vlongmapright{\mu(p)}v')$. When we fix a vertex $v\in V$ (a \emph{base point}), the pair $(\Gamma, v)$ can be seen as a language recognizer ($A$-automaton), whose language recognized is the set:
$L(\Gamma,v)=\{\mu(p):p\mbox{ is a path in }\Gamma\mbox{ with }\iota(p)=\tau(p)=v\}$.
When we pinpoint the vertex $v$, we implicitly assume that the underlying $A$-digraph of $(\Gamma,v)$ is the connected component of $\Gamma$ containing $v$, and $\|(\Gamma, v)\|$ denotes the cardinality of the set of vertices of $(\Gamma,v)$. A morphism of $A$-digraphs ($A$-automata) are defined as usual. The $A$-labelled graph $\Gamma$ is called \emph{complete} (\emph{deterministic}) if for each vertex $v\in V$ and $a\in A$ there is (at most) an edge $e\in E$ with $\iota(e)=v$ and $\mu(e)=a$. One usually refers to a deterministic and complete $A$-labelled graph $\Gamma$ with a finite number of vertices as \emph{semiautomaton} \cite{HowieAuto} and it can be equivalently described by a $3$-tuple $\mathcal{A}=(Q,A,\delta)$ where $Q$ is a finite set of states, $A$ is a finite alphabet, $\delta:Q\times A\rf Q$ is the \emph{transition function}. Fixing a base-point $q\in Q$ the language recognized by the automaton (DFA) $(\mathcal{A}, q)$ is the set $L(\mathcal{A},q)=\{u\in A^{*}: \delta(q,u)=q\}$. Note that this notation is compatible with the one presented above. The map $\delta$ induces an action $Q\overset{\cdot}{\curvearrowleft} A^{*}$ of $A^{*}$ on $Q$ defined inductively by the formula $q\cdot (a_{1}\ldots a_{n})=\delta(q, a_{1})\cdot  (a_{2}\ldots a_{n}), \forall q\in Q, \: \forall a_{1}\ldots a_{n}\in A^{*}$. The semiautomaton $\mathcal{A}$ is called \emph{reversible} whenever this action is a permutation. In this paper we consider alphabetical transducers with the same input and output alphabet, for further details on the general theory of automata and transducers we refer the reader to \cite{Eil, HowieAuto}. A \emph{finite state Mealy automaton}, shortly a transducer (or a machine), is a $4$-tuple $\mathrsfs{A} = (Q,A,\delta,\lambda)$ where $(Q,A,\delta)$ is a semiautomaton, while $\lambda:Q\times A\rf A$ is called the \emph{output function}. This function defines an action $Q\overset{\circ}{\curvearrowright} A^{*}$ of $Q$ on $A^{*}$ defined inductively by
$$
q\circ (a_{1}\ldots a_{n})=\lambda(q,a_{1})\left((q\cdot a_{1})\circ (a_{2}\ldots a_{n})\right)
$$
Both $Q\overset{\circ}{\curvearrowright} A^{*}$ and $Q\overset{\cdot}{\curvearrowleft} A^{*}$ can be naturally extended to $Q^{*}$; we refer to the pair $(Q^{*}\overset{\cdot}{\curvearrowleft} A^{*}$, $Q^{*}\overset{\circ}{\curvearrowright} A^{*})$ as the associated \emph{coupled-actions} of the transducer $\mathrsfs{A}$, and henceforth we will use the notation $\mathrsfs{A}=(Q,A,\cdot,\circ)$ underlying these actions.
\\
From the geometrical point the transducer $\mathrsfs{A}$ can be visualized as an $A\times A$-labelled digraph with edges of the form $q\mapright{a|b}q'$ whenever $q\cdot a=q'$ and $q\circ a=b$, and we will make no distinction between the transducer and the digraph notation. Considering just the input or the output labeling, we may define the \emph{input automaton} $\mathrsfs{A}_{\mathcal{I}}$ and the \emph{output automaton} $\mathrsfs{A}_{\mathcal{O}}$ having edges $q\mapright{a}q'$, $q\mapright{b}q'$ whenever $q\mapright{a|b}q'$ is an edge in $\mathrsfs{A}$, respectively. Another operation is the \emph{product} of two machines $\mathrsfs{A} = (Q,E,A\times A,\iota, \tau, \mu)$, $\mathrsfs{B} = (T,D,A\times A,\iota', \tau', \mu')$, this is the transducer $\mathrsfs{A}\mathrsfs{B} = (Q\times T,F,A\times A,\oo{\iota},\oo{\tau},\oo{\mu})$ whose edges are given by $(q,q')\mapright{a|b}(p,p')$ whenever $q\mapright{a|c}p$ is an edge in $E$ and  $q'\mapright{c|b}p'$ is an edge in $D$. The $k$-th power of the machine $\mathrsfs{A}$ is defined inductively by $\mathrsfs{A}^{k}=(\mathrsfs{A}^{k-1})\mathrsfs{A}$, and we put $\mathrsfs{A}^{k}_{\mathcal{I}}=(\mathrsfs{A}^{k})_{\mathcal{I}}$.
From the algebraic point of view the action $Q^{*}\overset{\circ}{\curvearrowright} A^{*}$ gives rise to a semigroup $\mathcal{S}(\mathrsfs{A})$ generated by the endomorphisms $\mathrsfs{A}_{q}$, $q\in Q$, of the rooted tree identified with $A^{*}$ defined by $\mathrsfs{A}_{q}(u)=q\circ u$, $u\in A^{*}$. For $q_{1},\ldots, q_{m}\in Q$ we may use the shorter notation $\mathrsfs{A}_{q_{1}\ldots q_{m}}=\mathrsfs{A}_{q_{1}}\ldots \mathrsfs{A}_{q_{m}}$. An important role in group theory is played by groups defined by invertible transducers, for more details we refer the reader to \cite{Nekra05}. A transducer $\mathrsfs{A} = (Q,A,\cdot,\circ)$ is called \emph{invertible} whenever the map $\lambda(q,\circ):A\rf A$ is a permutation. In this case all the maps $\mathrsfs{A}_{q}$, $q\in Q$, are automorphisms of the rooted regular tree identified with $A^{*}$, and the group generated by these automorphisms is denoted by $\mathcal{G}(\mathrsfs{A})$ (with identity $\id$). Henceforth a generator $\mathrsfs{A}_{q}$ of $\mathcal{G}(\mathrsfs{A})$ (or $\mathcal{S}(\mathrsfs{A})$) is identified with the element $q\in Q$, and its inverse with the formal inverse $q^{-1}\in Q^{-1}=\{q^{-1}:q\in Q\}$. Note that the actions of the inverses $Q^{-1}$ are given by the inverse (transducer) $\mathrsfs{A}^{-1}$ having $Q^{-1}$ as the set of vertices, and there is an edge $q^{-1}\mapright{a|b}p^{-1}$ in $\mathrsfs{A}^{-1}$ whenever $q\mapright{b|a}p$ is an edge in $\mathrsfs{A}$. The action of $\mathcal{G}(\mathrsfs{A})$ on $A^{*}$, in case $\mathrsfs{A}$ is invertible (or $\mathcal{S}(\mathrsfs{A})$ in the more general case), can be naturally extended on the \textit{boundary} $A^{\omega}$ of the tree. This action gives rise to the so called orbital graph. In general, given a finitely generated semigroup $S$ with set of generators $A$ which acts on the left on a set $X$, if $\pi:A^{*}\rightarrow S$ denotes the canonical map, then the \emph{orbital graph} $\Gamma(S,A,X)$ is defined as the $A$-digraph with set of vertices $X$, and there is an edge $x\mapright{a}y$ whenever $\pi(a)x=y$. When we want to pinpoint the connected component containing the element $y\in X$ we use the shorter notation $\Gamma(S,A,X,y)$ instead of $\left (\Gamma(S,A,X),y\right)$. Note that in the realm of groups, this notion correspond to the notion of Schreier graph. In particular for the group $\mathcal{G}(\mathrsfs{A})$ and $v\in A^{\ast}\sqcup A^{\omega}$, if $\St_{\mathcal{G}(\mathrsfs{A})}(v)=\{g\in \mathcal{G}(\mathrsfs{A}) \ : \ g(v)=v\}$ denotes the stabilizer of $v$, the Schreier graph $\Sch(\St_{\mathcal{G}(\mathrsfs{A})}(v), Q\cup Q^{-1})$ corresponds to the connected component of the orbital graph $\Gamma(\mathcal{G}(\mathrsfs{A}),Q\cup Q^{-1},A^{*}\sqcup A^{\omega})$ pinpointed by $v$. Equivalently, from the automaton point of view, we have
$$
\left(\Sch(\St_{\mathcal{G}(\mathrsfs{A})}(v), Q\cup Q^{-1}),\St_{\mathcal{G}(\mathrsfs{A})}(v)\right)\simeq\Gamma(\mathcal{G}(\mathrsfs{A}),Q\cup Q^{-1},A^{*}\sqcup A^{\omega},v)
$$
Another two important classes of transducers that we consider throughout the paper are the \emph{reversible} and \emph{bireversible} machines. A transducer $\mathrsfs{A}$ is called reversible whenever $\mathrsfs{A}_{\mathcal{I}}$ is a reversible semiautomaton, and it is called bireversible if in addition also $\mathrsfs{A}_{\mathcal{O}}$ is a reversible semiatomaton, hence in this case $\mathrsfs{A}$ must be necessarily invertible.

\subsection{A geometric perspective via the enriched dual}
The approach used in this paper strongly follows the ideas developed in \cite{DaRo14} using enriched duals. This is a compact way to deal with the relations of $\mathcal{G}(\mathrsfs{A})$, we briefly recall this notion and some of its consequences. An $\wt{A}$-digraph $\Gamma$ is called \emph{involutive} if whenever $p\mapright{a}q$ is and edge of $\Gamma$, so is $q\mapright{a^{-1}}p$. An involutive graph $\Gamma$ is called \emph{inverse} if in addition $\Gamma$ is deterministic. In depicting an inverse graph we can just draw one out of the two edges $p\mapright{a}q$, $q\mapright{a^{-1}}p$, $a\in\wt{ A}$; this corresponds to the choice of an orientation $E^{+}$ on the set of edges $E$; if in choosing this orientation and forgetting the orientation of the edges we obtain a tree, then we call it an \emph{inverse tree}. When a base-point $v\in V(\Gamma)$ is fixed, the pair $(\Gamma,v)$ is often referred as an \emph{inverse $A$-automaton} (or simply inverse automaton when the alphabet $A$ is clear from the context); and in case $\Gamma$ is an inverse tree, the pair $(\Gamma,v)$ is a called a \emph{rooted inverse tree}. For inverse automata there is an important property (belonging to the folklore) which relates languages to morphisms:
\begin{proposition}\label{prop: immersion lang hom}
Let $\Gamma_{1},\Gamma_{2}$ be two inverse graphs, and $q_{1},q_{2}$ be two vertices belonging to $\Gamma_{1},\Gamma_{2}$, respectively. Then $L(\Gamma_{1},v_{1})\subseteq L(\Gamma_{2},v_{2})$ if and only if there is a morphism $\varphi\colon (\Gamma_{1},v_{1})\rightarrow (\Gamma_{2},v_{2})$. Furthermore, $(\Gamma_{1},v_{1})$ is the minimal inverse automaton (up to isomorphism) recognizing $L(\Gamma_{1},v_{1})$.
\end{proposition}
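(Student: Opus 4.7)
The plan is to handle the two implications of the if-and-only-if separately, and then deduce the minimality claim as a corollary. For the ($\Leftarrow$) direction, I would simply observe that a morphism $\varphi\colon (\Gamma_{1},v_{1})\rightarrow (\Gamma_{2},v_{2})$ sends any closed path at $v_{1}$ to a closed path at $v_{2}$ with the same label, so labels of loops at $v_{1}$ appear as labels of loops at $v_{2}$, yielding $L(\Gamma_{1},v_{1})\subseteq L(\Gamma_{2},v_{2})$ immediately.

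For the harder ($\Rightarrow$) direction, I would build $\varphi$ explicitly. Since $(\Gamma_{1},v_{1})$ is taken to be the connected component of $v_{1}$, every vertex $q\in V(\Gamma_{1})$ can be written as $q=v_{1}\cdot w$ for some $w\in\wt{A}^{*}$, and I would define $\varphi(q):=v_{2}\cdot w$. Two points need verification: that $v_{2}\cdot w$ is defined in $\Gamma_{2}$, and that the value does not depend on the chosen $w$. The total-definedness uses the involutive structure: the $w$-path from $v_{1}$ to $q$ can be traversed in reverse to give a loop at $v_{1}$ labelled $ww\inv$, so $ww\inv\in L(\Gamma_{1},v_{1})\subseteq L(\Gamma_{2},v_{2})$, whence reading $w$ from $v_{2}$ is feasible. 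Independence of the representative follows similarly: if $v_{1}\cdot w=v_{1}\cdot w'$, then $w(w')\inv\in L(\Gamma_{1},v_{1})\subseteq L(\Gamma_{2},v_{2})$, and determinism of the inverse graph $\Gamma_{2}$ forces $v_{2}\cdot w=v_{2}\cdot w'$. That $\varphi$ preserves edges and labels is then immediate, since an edge $q\mapright{a}q'$ in $\Gamma_{1}$ is witnessed by words $w,wa$ representing $q,q'$, and the same words produce the corresponding edge in $\Gamma_{2}$ by determinism.

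Minimality is a direct consequence of the equivalence just proved. If $(\Gamma',v')$ is any inverse automaton with $L(\Gamma',v')=L(\Gamma_{1},v_{1})$, applying the equivalence in both directions yields morphisms $(\Gamma_{1},v_{1})\to(\Gamma',v')$ and $(\Gamma',v')\to(\Gamma_{1},v_{1})$. Each such morphism is surjective, because every vertex of the codomain is reachable from the base-point by some word $w$, and its preimage is obtained by reading the same $w$ from the source base-point. Comparing cardinalities gives $\|(\Gamma_{1},v_{1})\|\le\|(\Gamma',v')\|$, proving minimality. In case of equality, both morphisms are bijective on vertices, and since they preserve labels in a deterministic setting, they are forced to be bijective on edges as well, giving the asserted uniqueness up to isomorphism.

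The main obstacle is bridging the gap between the loop-only information contained in the languages $L(\Gamma_{i},v_{i})$ and the vertex-level information needed to define $\varphi$: the inclusion only constrains closed walks at the base-points, whereas the morphism must act consistently on every vertex. The key trick, enabled precisely by the involutive/inverse structure, is that any path from $v_{1}$ to $q$ gives rise to a loop $ww\inv$ at $v_{1}$, which is the mechanism that transforms purely "language" data into the combinatorial data needed to transport vertices from $\Gamma_{1}$ to $\Gamma_{2}$.
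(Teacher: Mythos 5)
Your proof is correct. Note that the paper itself gives no proof of this proposition --- it is stated as a folklore fact about inverse automata --- so there is nothing to compare against; your argument (transporting vertices via $\varphi(v_1\cdot w)=v_2\cdot w$, using $ww\inv\in L(\Gamma_1,v_1)$ to get totality and $w(w')\inv$ plus determinism and involutivity to get well-definedness, then two-way morphisms for minimality) is exactly the standard one. The only point worth making explicit is in the minimality step: surjectivity of the morphism $(\Gamma_1,v_1)\to(\Gamma',v')$ requires that $v_1\cdot w$ be defined whenever $v'\cdot w$ is, which uses the \emph{reverse} inclusion $L(\Gamma',v')\subseteq L(\Gamma_1,v_1)$; this is available since the two languages are equal in that setting, but your phrasing glosses over it.
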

Another important feature of inverse automata is their connection with subgroups of $F_{A}$ via the notion of Stallings automaton $\mathcal{S}_{t}(H)$ of a finitely generated group $H\le F_{A}$, we refer the reader to \cite{BarSil,KaMi02} for further details. While inverse automata on the alphabet $A$ essentially represent subgroups of $F_{A}$, there is an important tool recently introduced by Silva in \cite{SilvaVirtual} which is a compact way to deal with rational maps on $F_{A}$: the class of \emph{inverse transducers}. For our purposes, an inverse transducer is an inverse $(\wt{A}\times \wt{A})$-digraphs where the involution is given by $(a,b)\mapsto (a^{-1},b^{-1})$. A reversible machine can be always ``enriched'' with a structure of inverse transducer:
\begin{lemma}\cite[Lemma 1]{DaRo14}
Let $\mathrsfs{A}=(Q,A,\cdot,\circ)$ be a reversible (bireversible) transducer, then it can be extended to a reversible (bireversible) inverse transducer $\mathrsfs{A}^{-}=(Q,\wt{A},\cdot,\circ)$ obtained from $\mathrsfs{A}$ by adding to each edge $q\mapright{a|b}p$ of $\mathrsfs{A}$ the edge $p\vlongfr{a^{-1}}{b^{-1}}q$.
\end{lemma}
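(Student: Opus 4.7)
The plan is to verify, in order, that (i) $\mathrsfs{A}^{-}$ really is an inverse $(\wt{A}\times\wt{A})$-transducer, (ii) it is reversible, and (iii) it is bireversible when $\mathrsfs{A}$ is. By construction the edge set of $\mathrsfs{A}^{-}$ is the union of the edge set of $\mathrsfs{A}$ with its image under the involution $(a,b)\mapsto (a^{-1},b^{-1})$, so $\mathrsfs{A}^{-}$ is automatically involutive; the content of the lemma is therefore that determinism and reversibility survive the enlargement of the alphabet.

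The main step I would carry out is the determinism check. Fix a state $q\in Q$ and an input letter $x\in\wt{A}$. If $x=a\in A$, the unique outgoing $a$-edge of $q$ in $\mathrsfs{A}$ is still the only outgoing edge of $q$ in $\mathrsfs{A}^{-}$ with first coordinate $a$, because all newly adjoined edges have input labels in $A^{-1}$. If $x=a^{-1}\in A^{-1}$, then outgoing $x$-edges at $q$ in $\mathrsfs{A}^{-}$ are in bijection with incoming $a$-edges at $q$ in $\mathrsfs{A}$; since $\mathrsfs{A}$ is reversible, the map $p\mapsto p\cdot a$ is a bijection of $Q$, so there is exactly one such edge. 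This simultaneously yields input-determinism and input-completeness on $\wt{A}$, so $\mathrsfs{A}^{-}$ is indeed a (complete, deterministic) $\wt{A}$-transducer satisfying the involutive condition, hence an inverse transducer in the sense of the preliminaries.

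From the same computation reversibility of $\mathrsfs{A}^{-}$ is immediate: each $a\in A$ acts on $Q$ as a permutation by hypothesis, and by construction $a^{-1}$ acts as the inverse permutation, so the action of $\wt{A}^{*}$ on $Q$ via the input labels is by permutations. Under the stronger hypothesis that $\mathrsfs{A}$ is bireversible, the output semiautomaton $\mathrsfs{A}_{\mathcal{O}}$ is reversible as well; the analogous argument on the output coordinate — using that every state has, for each $b\in A$, exactly one incoming output-$b$-edge in $\mathrsfs{A}_{\mathcal{O}}$ — then shows that $(\mathrsfs{A}^{-})_{\mathcal{O}}$ is reversible on $\wt{A}$, so $\mathrsfs{A}^{-}$ is bireversible.

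I do not expect any real obstacle: the lemma is a bookkeeping verification. The one conceptual point to underline is that reversibility of the input side is precisely the condition ensuring that formally adjoining the inverse edges $p\longfr{a^{-1}}{b^{-1}}q$ preserves both determinism and completeness of the input on the enlarged alphabet $\wt{A}$, and bireversibility is the analogous condition on the output side; no further assumption is needed.
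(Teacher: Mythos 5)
Your verification is correct: the paper itself states this as a citation of \cite[Lemma 1]{DaRo14} and gives no proof, and your argument supplies exactly the routine check one would expect — involutivity is automatic from the construction, input-determinism and completeness on $A^{-1}$ follow from reversibility of $\mathrsfs{A}_{\mathcal{I}}$ (each $q$ has a unique incoming $a$-edge), and the same argument applied to $\mathrsfs{A}_{\mathcal{O}}$ gives bireversibility. Nothing is missing.
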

Note that the property of being invertible is not preserved on the alphabet $\wt{A}$ in the passage from $\mathrsfs{A}$ to its associated inverse transducer $\mathrsfs{A}^{-}$. However, if $\mathrsfs{A}$ is bireversible this property is preserved. This operator also commutes with the product:
\begin{proposition}\cite[Proposition 4]{DaRo14}\label{prop: inverse of products}
Let $\mathrsfs{A}$, $\mathrsfs{B}$ be two reversible transducers, then:
$$
\left(\mathrsfs{A}\mathrsfs{B}\right)^{-}=\mathrsfs{A}^{-}\mathrsfs{B}^{-}
$$
In particular, there is a epimorphism from $\mathcal{S}(\mathrsfs{A}^{-})$ onto $\mathcal{S}(\mathrsfs{A})$.
\end{proposition}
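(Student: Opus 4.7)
The plan is to prove the equality $(\mathrsfs{A}\mathrsfs{B})^{-}=\mathrsfs{A}^{-}\mathrsfs{B}^{-}$ by unwinding the two constructions and matching edges: both sides are $\wt{A}\times\wt{A}$-labelled digraphs on the vertex set $Q\times T$ (where $Q,T$ are the state sets of $\mathrsfs{A},\mathrsfs{B}$), so only the edge sets need to be compared. A preliminary step would be to check that reversibility passes to the product, so that $(\mathrsfs{A}\mathrsfs{B})^{-}$ is itself well-defined: this is straightforward, since given $(p,p')$ and input $a\in A$, reversibility of $\mathrsfs{A}$ picks out a unique $(q,c)$ with $q\mapright{a|c}p$, and reversibility of $\mathrsfs{B}$ then picks out a unique $q'$ with $q'\mapright{c|b}p'$ for a unique $b$.

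The core of the argument rests on a \emph{polarity} observation about the enriched dual: by construction every edge of $\mathrsfs{A}^{-}$ has either both labels in $A$ (an original edge of $\mathrsfs{A}$) or both labels in $A\inv$ (an added inverse edge), with no mixed edges. In an edge $(q,q')\mapright{x|y}(p,p')$ of the product $\mathrsfs{A}^{-}\mathrsfs{B}^{-}$ witnessed by an intermediate letter $z\in\wt{A}$, the polarity of $z$ propagates to both $x$ (through the $\mathrsfs{A}^{-}$-factor) and to $y$ (through the $\mathrsfs{B}^{-}$-factor). The case $z\in A$ then yields exactly an edge of $\mathrsfs{A}\mathrsfs{B}$ with positive labels, while the case $z=c\inv$, $x=a\inv$, $y=b\inv$ translates, through the defining involution on each factor, into the existence of $p\mapright{a|c}q$ in $\mathrsfs{A}$ and $p'\mapright{c|b}q'$ in $\mathrsfs{B}$, i.e.\ an inverse edge of $\mathrsfs{A}\mathrsfs{B}$. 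Together these two cases exhaust the edges of $(\mathrsfs{A}\mathrsfs{B})^{-}$, giving the equality.

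For the second assertion, the key observation is that $\mathrsfs{A}^{-}$, when fed a positive input $u\in A^{*}$, acts identically to $\mathrsfs{A}$: determinism forbids using any of the newly added $A\inv$-labelled edges, since each input letter read is positive. Consequently, if $q_{1}\ldots q_{n}$ and $p_{1}\ldots p_{m}$ induce the same transformation of $\wt{A}^{*}$ in $\mathcal{S}(\mathrsfs{A}^{-})$, their restrictions to $A^{*}$ coincide, so they also induce the same transformation in $\mathcal{S}(\mathrsfs{A})$. Thus $q\mapsto q$ extends to a well-defined and evidently surjective semigroup homomorphism $\mathcal{S}(\mathrsfs{A}^{-})\twoheadrightarrow\mathcal{S}(\mathrsfs{A})$. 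I do not anticipate a serious obstacle; the whole argument is bookkeeping on top of the polarity dichotomy, and the only place where care is needed is to invoke reversibility and determinism of the enriched duals precisely so that this dichotomy remains strict.
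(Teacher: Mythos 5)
Your argument is correct. Note that the paper does not actually prove this proposition: it is imported verbatim as \cite[Proposition 4]{DaRo14}, so there is no in-text proof to compare against. Your verification is nonetheless sound and self-contained: the preliminary check that reversibility of $\mathrsfs{A}$ and $\mathrsfs{B}$ passes to $\mathrsfs{A}\mathrsfs{B}$ (so that the left-hand side is defined) is right, and the polarity dichotomy --- every edge of an enriched transducer has both labels in $A$ or both in $A\inv$, so the intermediate letter $z\in\wt{A}$ forces the two factor edges to be simultaneously original or simultaneously added --- is exactly the mechanism that makes the edge sets of $(\mathrsfs{A}\mathrsfs{B})^{-}$ and $\mathrsfs{A}^{-}\mathrsfs{B}^{-}$ coincide; in particular it rules out spurious mixed edges in the product of the enrichments. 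The derivation of the epimorphism $\mathcal{S}(\mathrsfs{A}^{-})\twoheadrightarrow\mathcal{S}(\mathrsfs{A})$ by restricting the action to positive inputs $A^{*}$ (where determinism forbids the added $A\inv$-labelled edges from ever being traversed) is also the standard and correct way to obtain the second assertion.
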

The \emph{dual} of of a transducer is a well known concept in automata groups, and it is a useful tool already used in many papers, see for instance \cite{Pic12,StVoVo2011, VoVo2007,VoVo2010}. Formally, given a transducer $\mathrsfs{A} = (Q,A,\cdot,\circ)$ this is the (well defined) transducer $\partial\mathrsfs{A} = (A,Q,\circ,\cdot)$ such that $p\longfr{a}{b}q$ is an edge of $\mathrsfs{A} $ if and only if $a\longfr{p}{q}b$ is an edge of $\partial\mathrsfs{A}$. If the pair $(Q^{*}\overset{\cdot}{\curvearrowleft} A^{*},Q^{*}\overset{\circ}{\curvearrowright} A^{*})$ are the couple-actions associated to $\mathrsfs{A}$, then $(A^{*}\overset{\circ}{\curvearrowleft} Q^{*},A^{*}\overset{\cdot}{\curvearrowright} Q^{*})$ are the coupled-actions associated to $\partial\mathrsfs{A}$. Since sometimes we work simultaneously with both $\mathrsfs{A}$ and $\partial\mathrsfs{A}$, we use both the coupled-actions $(Q^{*}\overset{\cdot}{\curvearrowleft} A^{*},Q^{*}\overset{\circ}{\curvearrowright} A^{*})$ and $(A^{*}\overset{\circ}{\curvearrowleft} Q^{*},A^{*}\overset{\cdot}{\curvearrowright} Q^{*})$, and the order in which the actions appear will implicitly discriminate which of the two automata $\mathrsfs{A}$ and $\partial\mathrsfs{A}$ we are working with. For instance, let $u\in Q^{*}, v\in A^{*}$, if we write $u\cdot v$ or $u\circ v$ we are implicitly considering $\mathrsfs{A}$, otherwise a formulas $v\cdot u$, $v\circ u$ mean that we are working with the coupled-actions of $\partial\mathrsfs{A}$. Note that with this convention
$$
u\cdot v=(v^{R}\cdot u^{R})^{R},\quad u\circ v=(v^{R}\circ u^{R})^{R}, \quad \forall u\in Q^{*}, v\in A^{*}
$$
hold, where the \emph{mirror} operator $\circ^{R}$ is defined by $(u_{1}\ldots u_{k})^{R}\mapsto (u_{k}\ldots u_{1})$.
\\
The following proposition sums up some relationships between a transducer and its dual.
\begin{proposition}\cite{VoVo2007,VoVo2010,DaRo14}\label{prop: dual prop}
Let $\mathrsfs{A}$ be a transducer, then:
\begin{enumerate}
 \item[i)] $\mathrsfs{A}$ is invertible if and only if $\partial\mathrsfs{A}$ is reversible;
 \item[ii)] $\mathrsfs{A}$ is a reversible invertible transducer if and only if $\partial\mathrsfs{A}$ is a reversible invertible transducer;
 \item[iii)] $\mathrsfs{A}$ is bireversible if and only if $\partial\mathrsfs{A}$ is bireversible.
\end{enumerate}
\end{proposition}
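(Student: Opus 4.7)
The plan is to encode the edges of $\mathrsfs{A}$ as a relation $E_{\mathrsfs{A}}\subseteq Q\times A\times Q\times A$, identifying an edge $q\mapright{a|b}p$ with the tuple $(q,a,p,b)$ read as (start-state, input-label, end-state, output-label), and then to rephrase each property appearing in the proposition as the bijectivity of a specific projection $E_{\mathrsfs{A}}\to Q\times A$. Determinism-plus-completeness is the bijectivity of the (start, input)-projection $(q,a,p,b)\mapsto(q,a)$, which is automatic for a transducer; invertibility is the bijectivity of the (start, output)-projection $(q,a,p,b)\mapsto(q,b)$; reversibility is the bijectivity of the (end, input)-projection $(q,a,p,b)\mapsto(a,p)$; and, assuming invertibility (so that $\mathrsfs{A}_{\mathcal{O}}$ is a complete deterministic semiautomaton), its reversibility is the bijectivity of the (end, output)-projection $(q,a,p,b)\mapsto(b,p)$. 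Bireversibility is by definition the conjunction of the last three.

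The second ingredient is the duality bijection on edges: by definition of $\partial\mathrsfs{A}$, the $\mathrsfs{A}$-edge $q\mapright{a|b}p$ coded as $(q,a,p,b)$ corresponds to the $\partial\mathrsfs{A}$-edge $a\mapright{q|p}b$, whose (start, input, end, output)-tuple is $(a,q,b,p)$. In other words, duality swaps start-state with input-label and end-state with output-label, while preserving the start/end and input/output polarities. Chasing this through, the four projections of $E_{\partial\mathrsfs{A}}$ translate to projections of $E_{\mathrsfs{A}}$ as follows: (start, input) of $\partial\mathrsfs{A}$ becomes (input, start) of $\mathrsfs{A}$ (again automatic); (start, output) of $\partial\mathrsfs{A}$ becomes (input, end) of $\mathrsfs{A}$, i.e.\ reversibility of $\mathrsfs{A}$; (end, input) of $\partial\mathrsfs{A}$ becomes (output, start) of $\mathrsfs{A}$, i.e.\ invertibility of $\mathrsfs{A}$; and (end, output) of $\partial\mathrsfs{A}$ becomes (output, end) of $\mathrsfs{A}$, i.e.\ output reversibility of $\mathrsfs{A}$.

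With this dictionary in place, the three items are immediate readings. Part (i) is the single entry ``$\partial\mathrsfs{A}$ reversible iff $\mathrsfs{A}$ invertible''; part (ii) is obtained by conjoining this with ``$\partial\mathrsfs{A}$ invertible iff $\mathrsfs{A}$ reversible''; and part (iii) follows because bireversibility on each side is the conjunction of the three nontrivial bijection conditions, which the dictionary pairs up with the conditions defining bireversibility on the other side. The only real care required is the book-keeping: state coordinates live in $Q$ on $\mathrsfs{A}$'s side and in $A$ on $\partial\mathrsfs{A}$'s side, and one must check that the projection characterization of output reversibility is valid precisely under invertibility (so that $\mathrsfs{A}_{\mathcal{O}}$ is indeed a semiautomaton). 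Once these points are settled, the three equivalences are just different slices of the same coordinate permutation induced by duality, with no further calculation needed.
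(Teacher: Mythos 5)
Your proposal is correct. Note that the paper itself offers no proof of this proposition --- it is quoted from \cite{VoVo2007,VoVo2010,DaRo14} as a known fact --- so there is no internal argument to compare against; judged on its own, your ``dictionary'' of the four projections of the edge set $E_{\mathrsfs{A}}\subseteq Q\times A\times Q\times A$ is exactly the standard direct verification used in those references, just organized more systematically, and since $|E_{\mathrsfs{A}}|=|Q|\,|A|$ the bijectivity of each projection is equivalent to its injectivity or surjectivity, so all the equivalences do reduce to the coordinate permutation $(q,a,p,b)\mapsto(a,q,b,p)$ induced by duality. You were also right to flag the one genuine subtlety: the $(\mathrm{end},\mathrm{output})$-projection characterizes reversibility of $\mathrsfs{A}_{\mathcal{O}}$ only once invertibility guarantees that $\mathrsfs{A}_{\mathcal{O}}$ is deterministic and complete, and in part~(iii) that hypothesis is always available on whichever side is assumed bireversible, so the argument closes.
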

In the case $\mathrsfs{A}$ is invertible, we can extend the actions of $\mathrsfs{A}$ to the disjoint union $\mathrsfs{A}\sqcup \mathrsfs{A}^{-1}$ in the obvious way. This extension is clearly reflected on the coupled-action $(\wt{Q}^{*}\overset{\cdot}{\curvearrowleft} A^{*},\wt{Q}^{*}\overset{\circ}{\curvearrowright} A^{*})$ which is also equivalent to the action of the group $\mathcal{G}(\mathrsfs{A})$ on $A^{*}$. It is straightforward to check that $\partial\left(\mathrsfs{A}\sqcup \mathrsfs{A}^{-1}\right)=(\partial\mathrsfs{A})^{-}$ holds (see \cite[Lemma 2]{DaRo14}). The transducer $(\partial\mathrsfs{A})^{-}$ is called the \emph{enriched dual} of $\mathrsfs{A}$, and by the previous equation it is clear that $(\partial\mathrsfs{A})^{-}$ is a compact tool to geometrically encode algebraic and topological properties of the group $\mathcal{G}(\mathrsfs{A})$. For instance, the following theorem is a crucial ingredient used through the paper and it characterizes the relations defining the group $\mathcal{G}(\mathrsfs{A})$. This fact has been already used in many papers on the subject implicitly, here we give it in a formalized form.
\begin{theorem}\cite{DaRo14}\label{theo: charact relations}
Let $\mathrsfs{A}=(Q,A,\cdot,\circ)$ be an invertible transducer, with $\mathcal{G}(\mathrsfs{A})\simeq F_{Q}/N$. Consider the transducer $(\partial\mathrsfs{A})^{-}=(A,\wt{Q},\circ,\cdot)$, and let
\begin{equation}\label{eq: containing relation}
\mathcal{N}\subseteq \bigcap_{a\in A}L\left((\partial\mathrsfs{A})^{-},a\right)
\end{equation}
be the maximal subset invariant for the action $A\overset{\cdot}{\curvearrowright} \wt{Q}^{*}$.Then $N=\oo{\mathcal{N}}$.
\end{theorem}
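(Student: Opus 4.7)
The plan is to identify $\mathcal{N}$ explicitly with
$$
\mathcal{M} := \{u \in \wt{Q}^{*} : u \circ w = w \text{ for every } w \in A^{*}\},
$$
i.e.\ the set of words in $\wt{Q}^{*}$ acting as the identity automorphism on the tree $A^{*}$. Since $u$ and its reduced form $\overline{u}$ represent the same element of $\mathcal{G}(\mathrsfs{A}) = F_{Q}/N$, one has $u \in \mathcal{M}$ if and only if $\overline{u} \in N$, so $\overline{\mathcal{M}} = N$. Thus establishing $\mathcal{N} = \mathcal{M}$ will immediately give $\overline{\mathcal{N}} = N$.

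The bridge between $\mathrsfs{A}$ and its enriched dual is the pair of letter-level identities $a \circ q = q \circ a$ and $a \cdot q = q \cdot a$ for $q \in \wt{Q}$, $a \in A$, which are just the two readings of the same edge of $(\partial\mathrsfs{A})^{-}$. A short induction on $|u|$ extends them to $a \circ u = u \circ a$ (as letters of $A$) and $a \cdot u = u \cdot a$ (as words of $\wt{Q}^{|u|}$) for all $u \in \wt{Q}^{*}$. Consequently, $u \in \bigcap_{a \in A} L((\partial\mathrsfs{A})^{-},a)$ if and only if $u \circ a = a$ for every $a \in A$, i.e.\ $u$ fixes $A$ pointwise; and the $A$-invariance condition "$a \cdot u \in \mathcal{N}$ for every $a \in A$" translates into "$u \cdot a \in \mathcal{N}$ for every $a \in A$''. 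Throughout, the key computational tool is the standard Mealy recursion $u \circ (aw) = (u \circ a)\bigl((u \cdot a) \circ w\bigr)$.

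With this dictionary in place, the inclusion $\mathcal{M} \subseteq \mathcal{N}$ follows by verifying that $\mathcal{M}$ is an $A$-invariant subset of $\bigcap_{a} L((\partial\mathrsfs{A})^{-},a)$: any $u \in \mathcal{M}$ obviously fixes each letter, and the identity $u \circ (aw) = aw$ together with the recursion forces $(u \cdot a) \circ w = w$ for every $w \in A^{*}$, so $u \cdot a \in \mathcal{M}$; maximality of $\mathcal{N}$ then yields $\mathcal{M} \subseteq \mathcal{N}$. Conversely, for $u \in \mathcal{N}$ I would prove $u \circ v = v$ for all $v \in A^{*}$ by induction on $|v|$: the cases $|v| \le 1$ are covered by the language condition, while for $v = aw$ invariance provides $u \cdot a \in \mathcal{N}$, the inductive hypothesis (applied to $u\cdot a$ and the shorter word $w$) yields $(u \cdot a) \circ w = w$, and the recursion closes the step. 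The only genuinely delicate point is setting up the dictionary between the dual language/invariance conditions and the action of $u$ on $A^{*}$ via the identities $a\circ u = u\circ a$ and $a\cdot u=u\cdot a$; once that is in place, the induction is routine and yields $\mathcal{N} = \mathcal{M}$, and hence $\overline{\mathcal{N}} = N$.
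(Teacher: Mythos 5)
Your proposal is correct, and it is essentially the standard argument: the paper itself imports Theorem~\ref{theo: charact relations} from \cite{DaRo14} without reproving it, but the intended proof is exactly this unwinding of definitions, namely identifying $\mathcal{N}$ with the set $\mathcal{M}$ of words of $\wt{Q}^{*}$ acting as the identity on $A^{*}$ via the cross-diagram dictionary (final state of $(\partial\mathrsfs{A})^{-}$ read from $a$ $=$ output of $\mathrsfs{A}_{u}$ on $a$; output word of $(\partial\mathrsfs{A})^{-}$ $=$ next state of the power machine), then using maximality for one inclusion and your induction on $|v|$ for the other. The only point to watch --- which you already flag as the delicate one --- is the orientation convention: with the paper's mirror identities $u\circ v=(v^{R}\circ u^{R})^{R}$ the letter-level dictionary acquires a reversal, so you must fix once and for all which end of $u\in\wt{Q}^{*}$ acts first and state $a\circ u=u\circ a$, $a\cdot u=u\cdot a$ under that same convention; since $\mathcal{M}$ is not closed under reversal this is not cosmetic, but it affects only bookkeeping and not the structure of your argument.
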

We denote by $\mathcal{C}(G,A)$ the \emph{Cayley automaton} of the group $G$ with symmetric generating set $A\cup A^{-1}$ and base-point the identity of $G$. Note that if $G=F_{A}/H$, then $L(\mathcal{C}(G,A))=\sigma^{-1}(H)$. The following theorem gives a way to represent the Schreier automata from the powers of the enriched dual as well as a way to build (at the limit) the Cayley automaton of the group $\mathcal{G}(\mathrsfs{A})$.
\begin{theorem}\cite[Theorem 3]{DaRo14}\label{thm:schreier}
Let $G=\mathcal{G}(\mathrsfs{A})=F_{Q}/N$. For any $k\ge 1$ put $\mathcal{D}_{k}=((\partial\mathrsfs{A})^{-})^{k}_{\mathcal{I}}$. The following facts hold.
\begin{enumerate}
\item[i)] If $v=a_{1}\ldots a_{k}\in A^{k}$, and $H=\St_{G}(v)$. Then
$$
\left(\mathcal{D}_{k}, v\right)\simeq (\Sch(H,\wt{Q}),H)
$$
\item[ii)] If $\ul{v}=a_{1}a_{2}\ldots\in A^{\omega}$, and $H=\St_{G}(v)$, then $\varprojlim \{\mathcal{D}^{k}_{\mathcal{I}}\}_{k\ge 1}=\mathcal{D}_{\mathcal{I}}^{\infty}$ is an inverse graph such that
$$
(\mathcal{D}_{\mathcal{I}}^{\infty}, \ul{v})\simeq  (\Sch(H,\wt{Q}),H)
$$
\item[iii)] Let $(\mathcal{N}_{k},v_{k})=\prod_{v\in A^{k}} (\mathcal{D}_{k},v)$. Then $\mathcal{C}(G/\St_{G}(k),Q)\simeq (\mathcal{N}_{k},v_{k})$.
Furthermore, the inclusions $L(\mathcal{N}_{k},v_{k})\subseteq L(\mathcal{N}_{k-1},v_{k-1})$ induces maps $$\psi_{k,k-1}:(\mathcal{N}_{k},v_{k})\rf (\mathcal{N}_{k-1},v_{k-1})$$ giving rise to the inverse system $\left(\{(\mathcal{N}_{k},v_{k})\}_{k\ge 1}, \psi_{i,j}\right)$ such that if we put $\mathcal{N}^{\infty}=\varprojlim \{(\mathcal{N}_{k},v_{k})\}_{k\ge 1}$, then
$$
\mathcal{C}(G,Q)\simeq \mathcal{N}^{\infty}
$$
\end{enumerate}
\end{theorem}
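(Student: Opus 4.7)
The strategy is a uniform application of Proposition \ref{prop: immersion lang hom}: at each stage I will show that the two inverse automata we wish to identify recognize the same language, and then invoke minimality. The first observation is that $\mathcal{D}_k$ is a genuine inverse $\wt{Q}$-automaton: since $(\partial\mathrsfs{A})^-$ is an inverse transducer with involution $(q,q')\mapsto(q^{-1},q'^{-1})$ on $\wt{Q}\times\wt{Q}$, its $k$-th power carries the same involution, and its input projection $\mathcal{D}_k$ is involutive; determinism comes from the fact that a transducer is deterministic on the input, which is preserved by taking products and powers.

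For (i), I compute $L(\mathcal{D}_k,v)$. A word $u=q_1\cdots q_m\in\wt{Q}^*$ labels a loop at $v\in A^k$ in $\mathcal{D}_k$ exactly when the path $v\cdot u$ in $(\partial\mathrsfs{A})^-$ (extended to $\mathrsfs{A}^{k}$ coordinate by coordinate) ends at $v$. Using the relation $\partial(\mathrsfs{A}\sqcup\mathrsfs{A}^{-1})=(\partial\mathrsfs{A})^-$ and the identities $u\circ v=(v^R\circ u^R)^R$ and $u\cdot v=(v^R\cdot u^R)^R$ between the coupled actions, this is equivalent to $u\circ v=v$, i.e.\ $\sigma(u)\in \St_G(v)=H$. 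Hence $L(\mathcal{D}_k,v)=\sigma^{-1}(H)$. The Schreier graph $\Sch(H,\wt{Q})$ based at the coset $H$ is also an inverse $\wt{Q}$-automaton whose language at $H$ is exactly $\sigma^{-1}(H)$. By Proposition \ref{prop: immersion lang hom} applied in both directions, the two pointed inverse automata are isomorphic.

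For (ii), the inverse system $\{\mathcal{D}_k\}_{k\ge 1}$ admits connecting maps given by forgetting the last letter of $v\in A^k$ (these are morphisms of inverse $\wt{Q}$-automata, since the $\wt{Q}$-action on $A^*$ is length-preserving). The limit $\mathcal{D}^\infty_{\mathcal{I}}$ inherits the inverse-graph structure, and its language at $\underline{v}$ equals
\[
\bigcap_{k\ge 1} L(\mathcal{D}_k,\underline{v}[k])=\sigma^{-1}\!\left(\bigcap_{k\ge 1}\St_G(\underline{v}[k])\right)=\sigma^{-1}(\St_G(\underline{v})).
\]
Comparing with $\Sch(\St_G(\underline v),\wt Q)$ and invoking Proposition \ref{prop: immersion lang hom} once more yields the claimed isomorphism. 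For (iii) the product $(\mathcal{N}_k,v_k)=\prod_{v\in A^k}(\mathcal{D}_k,v)$ is still an inverse automaton, and its language is $\bigcap_{v\in A^k}L(\mathcal{D}_k,v)=\sigma^{-1}(\St_G(k))$, where $\St_G(k)=\bigcap_{v\in A^k}\St_G(v)$ is the kernel of the action of $G$ on the $k$-th level. This is precisely the language of $\mathcal{C}(G/\St_G(k),Q)$, so minimality gives the isomorphism $(\mathcal{N}_k,v_k)\simeq \mathcal{C}(G/\St_G(k),Q)$. The maps $\psi_{k,k-1}$ come from the language inclusion together with Proposition \ref{prop: immersion lang hom}, and the inverse limit $\mathcal{N}^\infty$ has language $\bigcap_k\sigma^{-1}(\St_G(k))=\sigma^{-1}(1)$ since $G$ acts faithfully on $A^\omega$; by minimality again $\mathcal{N}^\infty\simeq \mathcal{C}(G,Q)$.

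The only real subtlety is the language computation in (i): one must keep straight which of the two coupled actions is being used and how the involution introduced in passing from $\partial\mathrsfs{A}$ to $(\partial\mathrsfs{A})^-$ encodes the inverses in $\mathrsfs{A}\sqcup \mathrsfs{A}^{-1}$. Once this bookkeeping is settled, (ii) and (iii) are formal consequences obtained by intersecting languages and passing to the inverse limit in the category of pointed inverse automata.
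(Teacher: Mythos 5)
This statement is quoted verbatim from \cite[Theorem~3]{DaRo14} and the present paper contains no proof of it, so there is nothing internal to compare your argument against; judged on its own terms, your proof is the natural one and follows exactly the language-theoretic framework that the paper imports from \cite{DaRo14} (inverse automata are minimal recognizers of their languages, so it suffices to compute languages and intersect or pass to inverse limits). The structure is sound: $\mathcal{D}_k$ is inverse because powers of an inverse transducer are inverse transducers and the input projection of a transducer is deterministic and inherits the involution; the Schreier coset graph of $H\le F_Q$ is the connected inverse automaton with language $\sigma^{-1}(H)$; and parts (ii) and (iii) then follow formally from prefix-compatibility of the tree action, $\bigcap_k \St_G(\ul v[k])=\St_G(\ul v)$, and faithfulness of $G$ on $A^*$. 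The one place where your write-up is glib is the step you yourself flag: the mirror identities $u\circ v=(v^R\circ u^R)^R$ convert ``$u$ labels a loop at $v$ in $\mathcal{D}_k$'' into ``$u^R\circ v^R=v^R$'', not directly into ``$u\circ v=v$'', so as written you land on $\St_G(v^R)$ with reversed reading of the label word rather than on $\St_G(v)$. This is a matter of fixing the convention by which a tuple in $A^k$ indexes a state of the $k$-th power (and the order in which $\wt Q^*$ acts), and the statement of the theorem itself forces the correct choice, but an honest proof should make that identification explicit rather than leaving it as ``bookkeeping''; with that one line added, the argument is complete. A second, smaller point: in (ii) the reason the truncation maps $A^k\to A^{k-1}$ are automaton morphisms is that the group acts by \emph{prefix-preserving} tree automorphisms, not merely that the action is length-preserving.
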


\section{Freeness characterizations in term of the dynamics on the boundary}\label{sec: freeness vs dynamics}

The problem of determining whether an invertible transducer $\mathrsfs{A}$ generates a free group seems to be very interesting and it has not a complete solution. In this section we provide a characterization in term of the dynamics of its enriched dual transducer: we show that the property of not being free is equivalent to the existence of a finite orbital graph in the boundary of the enriched dual automaton pointed on some periodic point. Unfortunately, we are not able to give a characterization in term of just existence of a finite Schreier graph on the boundary. However, in the next subsection we show such a characterization for the class of bireversible transducers. We begin with the following lemma.

\begin{lemma}\label{lem: periodic in finite schr}
Let $\mathrsfs{A}=(Q,A,\cdot,\circ)$ be an invertible automaton, and let $S=\mathcal{S}(\partial \mathrsfs{A}^{-})$. The following hold:
\begin{itemize}
\item Let $y\in \wt{Q}^{*}$, if $\|\Gamma(S,A,\wt{Q}^{\omega},y^{\omega})\|<\infty$, then any vertex of $\Gamma(S,A,\wt{Q}^{\omega},y^{\omega})$ is a periodic point $z^{\omega}$ with $|z|=|y|k(z)$ for some integer $k(z)$ depending on $z$.
\item Let $x,y\in \wt{Q}^{*}$, if $\|\Gamma(S,A,\wt{Q}^{\omega},xy^{\omega})\|<\infty$, then any vertex of $\Gamma(S,A,\wt{Q}^{\omega},y^{\omega})$ is an almost-periodic point $hr^{\omega}$ with $|h|=|x|$ and $|r|=|y|k(r)$ for some integer $k(r)$ depending on $r$.
\end{itemize}
\end{lemma}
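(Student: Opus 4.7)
The plan is to reduce both statements to a permutation argument on the finite state set of the enriched dual. For the first item, take any vertex $\ul{w}$ in the forward $S$-orbit of $y^{\omega}$, so $\ul{w}=v\cdot y^{\omega}$ for some $v\in A^{*}$, and set $m=|y|$. Unpacking the action of $v$ through the product structure of $\partial\mathrsfs{A}^{-}$ and grouping the output into blocks of length $m$, one obtains the block decomposition
$$
v\cdot y^{\omega}=(v_{0}\cdot y)(v_{1}\cdot y)(v_{2}\cdot y)\cdots,\qquad v_{0}=v,\; v_{k+1}=v_{k}\circ y\in A^{|v|}.
$$
Hence, once we prove $v\circ y^{k}=v$ for some $k\ge 1$, the block-state sequence has period $k$ and $v\cdot y^{\omega}=z^{\omega}$ with $z=v\cdot y^{k}$ of length exactly $|y|k$, giving the required $k(z)$.

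The key technical step is to show that $\varphi_{y}\colon A^{|v|}\to A^{|v|}$, $u\mapsto u\circ y$, is a permutation; its finite order then furnishes the required $k$. By Proposition 5(i), invertibility of $\mathrsfs{A}$ forces reversibility of $\partial\mathrsfs{A}$ and thus of $\partial\mathrsfs{A}^{-}$, so for every single letter $q\in\wt{Q}$ the map $a\mapsto a\circ q$ is a bijection of $A$. Using the product-transducer description of the extended $\circ$-action, one inverts $(a_{1},\ldots,a_{n})\circ q$ sequentially: recover $a_{1}$ from its image via the bijection $a\mapsto a\circ q$, compute the intermediate output $q^{(1)}=a_{1}\cdot q$, then recover $a_{2}$ via $a\mapsto a\circ q^{(1)}$, and so on. This shows $u\mapsto u\circ q$ is a bijection of $A^{n}$ for every $q\in\wt{Q}$; composing along the letters of $y$, $\varphi_{y}$ is itself a permutation of the finite set $A^{|v|}$, so any $v$ lies on a cycle of length $k=k(v)$ and $v\circ y^{k}=v$.

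The second statement splits off an initial transient: for $\ul{w}=v\cdot(xy^{\omega})$ one has $\ul{w}=(v\cdot x)\bigl((v\circ x)\cdot y^{\omega}\bigr)$, so setting $h=v\cdot x$ (of length $|x|$) and applying the first item to $(v\circ x)\cdot y^{\omega}$ yields the periodic tail $r^{\omega}$ with $|r|=|y|k(r)$. The main subtle point in the whole argument, and the only nontrivial ingredient beyond routine bookkeeping, is the permutation property of $\varphi_{y}$; this is precisely where the hypothesis that $\mathrsfs{A}$ is invertible enters, through the reversibility of the enriched dual. The finiteness hypothesis on $\Gamma$ ultimately bounds $k(z)$ uniformly and, for any vertex of the connected component that is not in the forward orbit of $y^{\omega}$, would be needed to extend the argument via the finite SCC structure of the orbital graph.
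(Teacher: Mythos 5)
Your proof is correct and rests on the same mechanism as the paper's: reversibility of the enriched dual (from invertibility of $\mathrsfs{A}$) forces the state component to return after finitely many copies of $y$, so the output of $y^{\omega}$ is periodic with period a multiple of $|y|$, and the almost-periodic case splits off the transient $v\cdot x$ exactly as in the paper. The only cosmetic difference is that the paper invokes the finite order $m$ of the transition group of $\partial\mathrsfs{A}^{-}$ and inducts edge-by-edge along the orbital graph, whereas you treat an arbitrary orbit element $v\cdot y^{\omega}$ at once via the cycle length of $v$ under the permutation $u\mapsto u\circ y$ of $A^{|v|}$.
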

\begin{proof}
Since $\Gamma(S,A,\wt{Q}^{\omega},y^{\omega})$ is finite and $y^{\omega}$ is periodic, it is easy to see that the first statement holds if we show that if $x^{\omega}\mapright{a} t$ is an edge in $\Gamma(S,A,\wt{Q}^{\omega},y^{\omega})$ where $x\in\wt{Q}^{*}$ with $|x|=|y|k(x)$ and $t\in\wt{Q}^{\omega}$, then $t=z^{\omega}$ for some $z\in\wt{Q}^{*}$ with $|z|=|y|k(z)$. Since the transition monoid of $\partial \mathrsfs{A}^{-}$ is a finite group of some order $m$, we have that for any $u\in\wt{Q}^{*}$ and $a\in A$, $a\circ u^{m}=a$, from which it easily follows
$$
(a\cdot x^{m})^{\omega}=a\cdot (x^{m})^{\omega}=a\cdot x^{\omega}=t
$$
Taking $z=a\cdot x^{m}$ we get $t=z^{\omega}$ and $|z|=|y|(k(z)m)$, from which the previous statement follows.
\\
The second case is analogous. Indeed, it holds if we show that in case $x'z^{\omega}\mapright{a} t$ is an edge in $\Gamma(S,A,\wt{Q}^{\omega},xy^{\omega})$ where $x',z\in\wt{Q}^{*}$ with $|x|=|x'|$ and $|z|=|y|k(z)$ and $t\in\wt{Q}^{\omega}$, then $t=hr^{\omega}$ for some $h\in\wt{Q}^{*}$ with $|h|=|x|$ and $|r|=|y|k(r)$. Similarly as above we have
$$
a\cdot (x'(z^{m})^{\omega})=h (a\cdot z^{m})^{\omega}=t
$$
with $h=a\cdot x'$. The statement follows taking $r=a\cdot z^{m}$, from which we get $t=hr^{\omega}$ with $|h|=|x'|$ and $|r|=|y|(k(z)m)$.
\end{proof}
We are now ready to prove the general characterization.
\begin{theorem}\label{theo: characterization free}
Let $\mathrsfs{A}=(Q,A,\cdot,\circ)$ be an invertible automaton, and let $G:=\mathcal{G}(\mathrsfs{A})$, $S=\mathcal{S}(\partial \mathrsfs{A}^{-})$. Then the following are equivalent:
\begin{itemize}
\item [i)] $G=F_{Q}/N$ is not free with a non-trivial relation $y^{r}\in\wt{Q}^{*}$ for some positive integer $r$;
\item[ii)] there is a periodic point $y^{\omega}$ with $y\in \widetilde{Q}^{\ast}$ and $\oo{y}\neq 1$ such that $$ \|\Gamma(S,A,\wt{Q}^{\omega},y^{\omega}) \|<\infty$$
\end{itemize}
\end{theorem}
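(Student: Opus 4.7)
The plan is to prove the two implications separately, in both cases working directly with the maximal set $\mathcal{N}$ characterizing $N$ via Theorem~\ref{theo: charact relations}.

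For (i) $\Rightarrow$ (ii): the identity $N = \oo{\mathcal{N}}$ lets me assume without loss of generality that the witness $y^r$ already lies in $\mathcal{N}$, since whenever $G$ is non-free $\mathcal{N}$ must contain some $w$ with $\oo{w}\neq 1$ (one may even take $r=1$, $y=w$). From $y^r \in \mathcal{N}$ I read off $a\cdot y^r = a$ for every $a\in A$, and the $A^*$-invariance built into $\mathcal{N}$ gives both $v\cdot y^r = v$ and $v\circ y^r \in \mathcal{N}$ for every $v\in A^*$. A short induction on $n$ then yields $v \circ y^{nr} = (v\circ y^r)^n$ and therefore $v \circ y^\omega = (v\circ y^r)^\omega$. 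Every vertex of $\Gamma(S,A,\wt{Q}^\omega,y^\omega)$ is thus periodic with period length dividing $r|y|$, and the orbit injects into the finite set $\wt{Q}^{r|y|}$.

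For (ii) $\Rightarrow$ (i): Lemma~\ref{lem: periodic in finite schr} provides a common period length $\ell := |y|K$, where $K$ is the least common multiple of the integers $k(z)$ as $z^\omega$ ranges over the finite vertex set; every vertex is then of the form $z_v^\omega$ with $z_v \in \wt{Q}^\ell$ the length-$\ell$ prefix of $v\circ y^\omega$. Next, since $\mathrsfs{A}$ is invertible, Proposition~\ref{prop: dual prop} makes $(\partial\mathrsfs{A})^-$ reversible, so the input-action $\wt{Q}^* \curvearrowright A$ factors through a finite permutation group $H\le S_A$; letting $e$ be its exponent, one has $a\cdot w^e = a$ for every $a\in A$ and every $w\in\wt{Q}^*$.

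The central claim is $y^r \in \mathcal{N}$ with $r := Ke$. Because a transducer reproduces prefixes, $v\circ y^r$ is the length-$\ell e$ prefix of $v\circ y^\omega = z_v^\omega$, hence $v\circ y^r = z_v^e$; the choice of $e$ forces $a \cdot z_v^e = a$ for every $a\in A$, so the whole $A^*$-orbit of $y^r$ sits inside $\bigcap_{a\in A} L((\partial\mathrsfs{A})^-, a)$. Invariance of this orbit under $A^*$ is automatic from $u \circ (v \circ y^r) = (vu) \circ y^r$, so by the maximality defining $\mathcal{N}$ we get $y^r \in \mathcal{N}$, and Theorem~\ref{theo: charact relations} then gives $\oo{y^r}\in N$. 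Since $F_Q$ is torsion-free and $\oo{y} \neq 1$, this is a non-trivial relation, establishing (i). The only delicate point is the simultaneous role of the two constants in $r$: one needs $r$ to be a multiple of $K$ (so that $y^r$ covers an integer number of output-periods and the prefix identification $v\circ y^r = z_v^e$ holds) and of $e$ (so that $z_v^e$ acts trivially on $A$); once these two are aligned, the rest is bookkeeping.
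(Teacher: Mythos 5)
Your proof is correct and follows essentially the same route as the paper: for (i)$\Rightarrow$(ii) you show the $A^*$-orbit of a word of $\mathcal{N}$ consists of words of fixed length, hence the orbit of $y^\omega$ is finite, and for (ii)$\Rightarrow$(i) you combine Lemma~\ref{lem: periodic in finite schr} with the finiteness of the state-permutation group of $(\partial\mathrsfs{A})^{-}$ to produce an invariant subset of $\bigcap_{a\in A}L((\partial\mathrsfs{A})^{-},a)$ containing $y^{r}$, exactly as in the paper (your normalization of all periods to the common length $|y|K$ is in fact a slightly cleaner bookkeeping than the paper's $r=\lcm\{m,k_1,\dots,k_n\}$). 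The only blemish is notational: in the paper's convention for the dual $(A,\wt{Q},\circ,\cdot)$ the state action is $\circ$ and the output action is $\cdot$, so your identities $a\cdot y^{r}=a$ and $v\circ y^{r}\in\mathcal{N}$ have the two symbols interchanged.
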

\begin{proof}
$i)\Rightarrow ii)$. Let $y^{r}\in \wt{Q}^{*}$ be a relation of $G$ with $\oo{y}\neq 1$. By Theorem \ref{theo: charact relations} the set $Y=\{u\cdot y^{r}: u\in A^{\ast}\}$ is invariant for the action $A\overset{\cdot}{\curvearrowright} \wt{Q}^{*}$. Furthermore, $Y\subseteq \bigcap_{a\in A}L\left((\partial\mathrsfs{A})^{-},a\right)$, hence a simple computation shows that
$$
a\cdot z^{\omega}=(a\cdot z)^{\omega}, \quad \forall z\in Y, \, \forall a\in A
$$
holds. Thus, since $Y$ is finite we deduce $\|\Gamma(S,A,\wt{Q}^{\omega},y^{\omega}) \|<\infty$.
\\
$ii)\Rightarrow i)$. By Lemma \ref{lem: periodic in finite schr} the vertices of $\Gamma(S,A,\wt{Q}^{\omega},y^{\omega})$ are $y_{1}^{\omega},\ldots, y_{n}^{\omega}$ with $|y_{1}|=k_{1},\ldots, |y_{n}|=k_{n}$. Let $m$ be the order of the transition monoid of $\partial \mathrsfs{A}^{-}$, and let $r=\lcm\{m,k_{1},\ldots, k_{n}\}$, by definition of $\Gamma(S,A,\wt{Q}^{\omega},y^{\omega})$ we have that if $y_{i}^{\omega}\mapright{a}y_{j}^{\omega}$ is an edge in $\Gamma(S,A,\wt{Q}^{\omega},y^{\omega})$, then $a\cdot y_{i}^{r}=y_{j}^{r}$. Hence, since $m$ divides $r$, we have that the set
$$
Y=\{y_{1}^r,\ldots, y_{n}^{r}\}\subseteq \bigcap_{a\in A}L\left((\partial\mathrsfs{A})^{-},a\right)
$$
 is invariant for the action $A\overset{\cdot}{\curvearrowright} \wt{Q}^{*}$. Furthermore, since $\oo{y}\neq 1$ and $y^{r}\in Y$, we have that $y^{r}$ is a non-trivial relation of $G$, whence $G$ is not free.
\end{proof}

\subsection{The bireversible case}

Enriching a reversible transducer $\mathrsfs{A}=(Q,A,\cdot,\circ)$ with a structure of inverse transducer, in general, changes the structure of its generated semigroup: by Proposition \ref{prop: inverse of products} there is an epimorphism $\mathcal{S}(\mathrsfs{A}^{-})\rightarrow\mathcal{S}(\mathrsfs{A})$. In case $\mathrsfs{A}$ is bireversible, both $\mathrsfs{A}^{-}$ and $\mathrsfs{A}$ are invertible and thus they define a group; also in this case it is straightforward to check that there is an epimorphism $\mathcal{G}(\mathrsfs{A}^{-})\rightarrow\mathcal{G}(\mathrsfs{A})$. Answering to an open question raised in \cite{DaRo14}, quite surprisingly, in the next theorem we show that this is actually an isomorphism.
\begin{theorem}\label{theo: enriched is equal}
Let $\mathrsfs{A}=(Q,A,\cdot,\circ)$ be a bireversible transducer, then $\mathcal{G}(\mathrsfs{A}^{-})\simeq\mathcal{G}(\mathrsfs{A})$.
\end{theorem}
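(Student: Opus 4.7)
The plan is to show injectivity of the natural epimorphism $\mathcal{G}(\mathrsfs{A}^{-})\twoheadrightarrow \mathcal{G}(\mathrsfs{A})$. Writing $\mathcal{G}(\mathrsfs{A})=F_{Q}/N$ and $\mathcal{G}(\mathrsfs{A}^{-})=F_{Q}/N'$, the inclusion $N'\subseteq N$ is automatic and the goal becomes $N\subseteq N'$. I would invoke Theorem \ref{theo: charact relations} for both transducers, obtaining $N=\oo{\mathcal{N}}$ with $\mathcal{N}$ the maximal $A$-invariant subset of $\bigcap_{a\in A}L((\partial\mathrsfs{A})^{-},a)$, and $N'=\oo{\mathcal{N}'}$ with $\mathcal{N}'$ the maximal $\tilde{A}$-invariant subset of $\bigcap_{a\in \tilde{A}}L((\partial\mathrsfs{A}^{-})^{-},a)$, so it suffices to prove $\mathcal{N}\subseteq \mathcal{N}'$.

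The first step is an explicit description of $(\partial\mathrsfs{A}^{-})^{-}$: each edge $q\mapright{a|b}p$ of $\mathrsfs{A}$ contributes the four edges
$$a\mapright{q|p}b,\qquad b\mapright{q^{-1}|p^{-1}}a,\qquad a^{-1}\mapright{p|q}b^{-1},\qquad b^{-1}\mapright{p^{-1}|q^{-1}}a^{-1}.$$
Hence no edge connects an $A$-vertex with an $A^{-1}$-vertex; the $A$-part coincides with $(\partial\mathrsfs{A})^{-}$, and the map $\iota$ sending $a\mapsto a^{-1}$ on vertices and exchanging input and output $(q|p)\leftrightarrow(p|q)$ on each edge is an isomorphism of the $A$-part onto the $A^{-1}$-part. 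In particular, an input-loop at $a\in A$ reading $u$ with output $u^{*}$ corresponds under $\iota$ to an input-loop at $a^{-1}$ reading $u^{*}$ with output $u$.

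The main step then exploits bireversibility of $\mathrsfs{A}$: at each vertex $a$ of $(\partial\mathrsfs{A})^{-}$, the input-to-output map $u\mapsto a\cdot u$ is a bijection of $\tilde{Q}^{n}$ for every $n$, since bireversibility of the dual lets us reconstruct each input letter from the corresponding output letter and the current state. The $A$-invariance of $\mathcal{N}$ means $a\cdot \mathcal{N}\subseteq \mathcal{N}$, so this bijection of $\tilde{Q}^{n}$ restricts to an injection on the finite set $\mathcal{N}_{n}\subseteq \tilde{Q}^{n}$, hence is a bijection $\mathcal{N}_{n}\to \mathcal{N}_{n}$. Consequently, for every $u\in \mathcal{N}$ and every $a\in A$ there is a unique $v\in \mathcal{N}$ with $a\cdot v=u$, which via $\iota$ is exactly an input-loop at $a^{-1}$ reading $u$ and producing output $v$. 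This simultaneously yields $\mathcal{N}\subseteq L((\partial\mathrsfs{A}^{-})^{-},a^{-1})$ and $a^{-1}\cdot u=v\in \mathcal{N}$ for all $a\in A$; combined with the already known conditions on the $A$-side, $\mathcal{N}$ satisfies the defining conditions of $\mathcal{N}'$, and maximality gives $\mathcal{N}\subseteq \mathcal{N}'$, whence $N\subseteq N'$.

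The main obstacle is the passage from injectivity to bijectivity of $u\mapsto a\cdot u$ on $\mathcal{N}_{n}$; the crucial point is that $\mathcal{N}_{n}$ sits inside the finite set $\tilde{Q}^{n}$ and is stable under the input-to-output map by $A$-invariance, so finite pigeonhole supplies surjectivity. The geometric meaning of $\iota$ is that this surjective preimage construction furnishes precisely the $A^{-1}$-loop condition and $A^{-1}$-invariance required to enlarge $\mathcal{N}$ into the maximal $\tilde{A}$-invariant set $\mathcal{N}'$.
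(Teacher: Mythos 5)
Your proposal is correct and follows essentially the same route as the paper: both reduce the problem to showing that the maximal invariant set $\mathcal{N}$ for $(\partial\mathrsfs{A})^{-}$ already satisfies the loop and invariance conditions at the $A^{-1}$-states, and both produce the required preimage $v$ with $a\cdot v=u$ from finiteness (the paper via the finite order of the permutation $u\mapsto a\cdot u$, you via pigeonhole on $\mathcal{N}_{n}$ --- the same element in substance). Your explicit four-edge description and the swap isomorphism $\iota$ are just an unpacking of the paper's identity $\partial\left(\mathrsfs{A}^{-}\sqcup(\mathrsfs{A}^{-})^{-1}\right)=(\partial\mathrsfs{A})^{-}\sqcup[(\partial\mathrsfs{A})^{-}]^{-1}$ and its language-level consequence (\ref{eq: charact language}).
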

\begin{proof}
Note that both $\mathcal{G}(\mathrsfs{A}^{-}\sqcup (\mathrsfs{A}^{-})^{-1})\simeq \mathcal{G}(\mathrsfs{A}^{-})$ and $$\mathcal{G}\left((\partial\mathrsfs{A})^{-}\sqcup [(\partial\mathrsfs{A})^{-}]^{-1}\right)\simeq \mathcal{G}\left((\partial\mathrsfs{A})^{-}\right)$$ holds. Furthermore, by a simple computation we get:
\begin{equation}\label{eq: partial and inverses}
\partial\left(\mathrsfs{A}^{-}\sqcup (\mathrsfs{A}^{-})^{-1}\right)=[\partial \mathrsfs{A}^{-}]^{-}=\left[\partial\mathrsfs{A}\sqcup \partial\mathrsfs{A}^{-1}\right]^{-}=(\partial\mathrsfs{A})^{-}\sqcup [(\partial\mathrsfs{A})^{-}]^{-1}
\end{equation}
where in the last equality we have used $ [(\partial\mathrsfs{A})^{-1}]^{-}= [(\partial\mathrsfs{A})^{-}]^{-1}$. Let $\mathcal{G}(\mathrsfs{A})=F_{Q}/N$, by Theorem \ref{theo: charact relations} there is a maximal subset $\mathcal{N}\subseteq \bigcap_{a\in A}L\left((\partial\mathrsfs{A})^{-},a\right)$ invariant for the action $A\overset{\cdot}{\curvearrowright} \wt{Q}^{*}$ with $N=\oo{\mathcal{N}}$. Since there is an epimorphism $\mathcal{G}(\mathrsfs{A}^{-})\rightarrow\mathcal{G}(\mathrsfs{A})$, by Theorem \ref{theo: charact relations} and (\ref{eq: partial and inverses}) to prove the isomorphism it is enough to show the inclusion
\begin{equation}\label{eq: inclusion to prove}
\mathcal{N}\subseteq \bigcap_{a\in A^{-1}}L\left([(\partial\mathrsfs{A})^{-}]^{-1},a\right)
\end{equation}
and that $\mathcal{N}$ is invariant for the action $A^{-1}\overset{\cdot}{\curvearrowright} \wt{Q}^{*}$ in the transducer $[(\partial\mathrsfs{A})^{-}]^{-1}$. It is not difficult to check that
\begin{equation}\label{eq: charact language}
L\left([(\partial\mathrsfs{A})^{-}]^{-1},a^{-1}\right)=\left\{v: a\cdot v\in L\left((\partial\mathrsfs{A})^{-},a\right)\right\}
\end{equation}
holds. Fix a word $u\in\mathcal{N}$. Since $(\partial\mathrsfs{A})^{-}$ is invertible, for any $a\in A$ there is an integer $m(a)$ such that $a^{m(a)}\cdot u=u$. Moreover, since $\mathcal{N}$ is invariant for the action $A\overset{\cdot}{\curvearrowright} \wt{Q}^{*}$, we get
\begin{equation}\label{eq: invariance}
a^{i}\cdot u\in \mathcal{N}\subseteq \bigcap_{a\in A}L\left((\partial\mathrsfs{A})^{-},a\right)
\end{equation}
for all $i\in [1,m(a)]$. In particular, $u=a\cdot (a^{m(a)-1}\cdot u)$ with $a^{m(a)-1}\cdot u\in L\left((\partial\mathrsfs{A})^{-},a\right)$, and so by (\ref{eq: charact language}) we get $u\in L\left([(\partial\mathrsfs{A})^{-}]^{-1},a^{-1}\right)$. Whence, by repeating the previous argument for all the elements of $A^{-1}$, we deduce that (\ref{eq: inclusion to prove}) holds. The invariance of $u$ for the action $A^{-1}\overset{\cdot}{\curvearrowright} \wt{Q}^{*}$ follows from (\ref{eq: invariance}) since for all $a\in A$ we have $a^{-1}\cdot u=a^{m(a)-1}\cdot u\in\mathcal{N}$.
\end{proof}
\noindent Therefore, while the algebraic structure of $\mathcal{G}(\mathrsfs{A})$ is unchanged when we enrich it, the dynamics of $\mathcal{G}(\mathrsfs{A})$ on $\wt{Q}^{\omega}$ is ``richer'' than on $Q^{\omega}$, and as we will see, part of this dynamics is influencing the algebraic structure of the group generated by its dual.

Theorem \ref{theo: characterization free} shows that the property of not being a free group can be characterized in term of existence of finite orbital graphs centered at periodic elements in the boundary of $\wt{Q}^{\omega}$ under the action of the semigroup $S=\mathcal{S}(\partial \mathrsfs{A}^{-})$. Since $\mathrsfs{A}$ is bireversible, the orbital graph $\Gamma(S,A,\wt{Q}^{\omega},\ul{v})$ ($\Gamma(S,A,\wt{Q}^{*},u)$) is isomorphic (considering only the positive edges) to the Schreier graph of the stabilizer of $\ul{v}$ ($u$) for the group $\mathcal{G}(\partial \mathrsfs{A} ^{-})$; henceforth we denote it by $\Sch(\ul{v})$ ($\Sch(u)$). We now expand Theorem \ref{theo: characterization free} to the bireversible case by linking the property of not being a free group for $\mathcal{G}(\mathrsfs{A})$ to the existence of finite Schreier graphs in the boundary $\wt{Q}^{\omega}$ under the action of the group $\mathcal{G}(\partial \mathrsfs{A}^{-})$. It is clear that just the existence of a finite Schreier graph is not enough, indeed there are always finite Schreier graphs (no matter $\mathcal{G}(\mathrsfs{A})$ is free or not), namely all $\Sch(y^{\omega})$, with $\oo{y}=1$, are finite. Therefore, we need to consider a smaller class of points $\wt{Q}^{\omega}$ in the boundary: the class of \emph{essentially non-trivial elements}. Roughly speaking these points represent elements in the boundary of $F_{Q}$. More precisely, for two given elements $u,v \in F_Q$, written in reduced form, we denote by $u\wedge v$ the longest common prefix of $u$ and $v$. Thus, $(F_Q,d)$ where $d(u,v) = 2^{-r(u,v)}$ with
$$
r(u,v) = \left\{
\begin{array}{ll}
|u\wedge v|&\mbox{ if }u \neq v\\
+\infty&\mbox{ otherwise}
\end{array}
\right.
$$
is a metric space which is not complete, but its completion admits a simple description: we add to $F_Q$ all the (right) infinite reduced words $q_1q_2q_3\ldots$ on $\wt{Q}^{\omega}$. These new elements are called the {\em boundary} of $F_Q$ and the completion (which is indeed compact) is denoted by $\widehat{F_Q}$. For every element $\ul{u}\in \wt{Q}^{\omega}$ we can associate the following sequence
$$
v_{n}= \oo{\ul{u}[n]}
$$
of elements in $\widehat{F_Q}$. Since this space is metric and compact, every sequence has a converging subsequence. Thus, we have the following definition.
\begin{definition}[essentially non-trivial]
An element $\ul{u}\in\wt{Q}^{\omega}$ is called essentially non-trivial if there exists a convergent subsequence $\{v_{n_{j}}\}_{j>0}$ of $\{ \oo{\ul{u}[n]}\}_{n>0}$ which converges to an element of $\widehat{F_Q}\setminus F_{Q}$.
\end{definition}
Words that are not essentially non-trivial are called \emph{essentially trivial}. The following proposition gives another description of these points.
\begin{proposition}\label{prop: charact ess trivial}
Let $\ul{u}\in\wt{Q}^{\omega}$, the following are equivalent:
\begin{itemize}
\item[i)] $\ul{u}$ is essentially trivial;
\item[ii)] There is an integer $m$ such that $\{ \oo{\ul{u}[n]}\}_{n>0}\subseteq \wt{Q}^{\le m}$;
\item[iii)] There is an integer $m$ such that for any finite factor $w$ of $\ul{u}\in\wt{Q}^{\omega}$, $\oo{w}\in \wt{Q}^{\le m}$;
\item[iv)] $\ul{u}$ is the label of a (right) infinite path on a finite rooted $Q$-inverse tree $(\mathcal{T},r)$ starting from the root $r$.
\end{itemize}
\end{proposition}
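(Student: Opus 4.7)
The plan is to prove the chain of implications $\mathrm{(i)}\Leftrightarrow\mathrm{(ii)}\Leftrightarrow\mathrm{(iii)}$ using elementary compactness arguments on $\widehat{F_Q}$, and then to close the loop by showing $\mathrm{(iv)}\Rightarrow\mathrm{(iii)}$ from a distance estimate in a finite tree and $\mathrm{(iii)}\Rightarrow\mathrm{(iv)}$ by an explicit construction of the required inverse tree inside the Cayley graph of $F_Q$.

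First I would treat $\mathrm{(ii)}\Leftrightarrow\mathrm{(iii)}$. The implication $\mathrm{(iii)}\Rightarrow\mathrm{(ii)}$ is immediate because prefixes are factors. For the converse, any factor can be written as $\ul{u}[i,j]$ and, since $\ul{u}[j]=\ul{u}[i-1]\,\ul{u}[i,j]$ as words, we have $\oo{\ul{u}[i,j]}=\oo{\oo{\ul{u}[i-1]}^{-1}\,\oo{\ul{u}[j]}}$, whose reduced length is at most $|\oo{\ul{u}[i-1]}|+|\oo{\ul{u}[j]}|\le 2m$; thus (iii) holds with bound $2m$. For $\mathrm{(ii)}\Leftrightarrow\mathrm{(i)}$, observe that $R_Q\cap\wt{Q}^{\le m}$ is finite, so if (ii) holds the sequence $\{v_n\}_{n>0}$ takes only finitely many values in $F_Q$ and every convergent subsequence is eventually constant, hence its limit lies in $F_Q$; this gives $\mathrm{(ii)}\Rightarrow\mathrm{(i)}$. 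Conversely, if (ii) fails then $|v_n|$ is unbounded and we can extract $\{v_{n_j}\}$ with $|v_{n_j}|\to\infty$; by compactness of $\widehat{F_Q}$ a further subsequence converges, and since the lengths diverge the limit must be an infinite reduced word in $\widehat{F_Q}\setminus F_Q$, contradicting (i).

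For $\mathrm{(iv)}\Rightarrow\mathrm{(iii)}$ I would use the standard fact that in an inverse tree any two vertices are joined by a unique reduced path: if $\ul{u}$ labels an infinite path from the root $r$ in a finite inverse tree $(\mathcal{T},r)$, then for every prefix $\ul{u}[n]$ the reduced word $\oo{\ul{u}[n]}$ labels the unique reduced path from $r$ to the current endpoint, so $|\oo{\ul{u}[n]}|$ is bounded by the diameter of $\mathcal{T}$; the same applies to factors by the argument above, giving (iii).

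The main work is $\mathrm{(iii)}\Rightarrow\mathrm{(iv)}$. Assuming (iii), take the vertex set
\[
V=\{\oo{\ul{u}[n]}:n\ge 0\},
\]
which is finite (as already noted, (iii) and (ii) yield $V\subseteq R_Q\cap\wt{Q}^{\le 2m}$). Writing $v_n=\oo{\ul{u}[n]}$, define edges
\[
E^{+}=\{v_n\mapright{u_{n+1}}v_{n+1}:n\ge 0\},\qquad E=E^{+}\cup\{v_{n+1}\vlongmapright{u_{n+1}^{-1}}v_n:n\ge 0\},
\]
so that the resulting $\wt{Q}$-digraph $\mathcal{T}$ is involutive by construction. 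Determinism follows because $v_n=v_m$ and $u_{n+1}=u_{m+1}$ force $v_{n+1}=\oo{v_n u_{n+1}}=\oo{v_m u_{m+1}}=v_{m+1}$, and the analogous check for inverse edges works the same way (using that a cancellation of the last letter is uniquely reversed by appending it). To see that $\mathcal{T}$ is a tree, consider the map $\psi\colon\mathcal{T}\rightarrow\mathrm{Cay}(F_Q,\wt{Q})$ sending each vertex to its image in $F_Q$ and each edge $v\mapright{q}v'$ to the Cayley edge from $v$ to $vq$; since every edge of $\mathcal{T}$ satisfies $v'=\oo{vq}=vq$ in $F_Q$, this is a morphism of inverse graphs, and it is injective on vertices (distinct reduced words are distinct elements of $F_Q$), hence injective on edges by determinism. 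Thus $\mathcal{T}$ embeds into the Cayley graph of $F_Q$, which is a tree; being connected (every $v_n$ is reachable from $v_0=1$), $\mathcal{T}$ is itself a finite rooted $Q$-inverse tree, and $\ul{u}$ labels an infinite path in it starting at the root. The only delicate point here is verifying well-definedness of the edge set (which I expect to be the main obstacle but which reduces to the computation already sketched), after which the embedding into the Cayley graph handles acyclicity for free.
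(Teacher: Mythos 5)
Your proof is correct and follows essentially the same route as the paper: the equivalences $i)\Leftrightarrow ii)\Leftrightarrow iii)$ are the compactness/finiteness observations the paper dismisses as trivial, your tree in $iii)\Rightarrow iv)$ is exactly the paper's tree $\mathcal{M}(R)$ of reduced left factors (realized, as the paper implicitly intends, as a subtree of the Cayley graph of $F_Q$), and $iv)\Rightarrow iii)$ uses the same unique-reduced-path fact. The only difference is that you supply the routine verifications (determinism, acyclicity via the embedding) that the paper leaves to the reader.
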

\begin{proof}
$i)\Leftrightarrow ii)$. Trivial.\\
$ii)\Leftrightarrow iii)$. Trivial.\\
$ii)\Rightarrow iv)$. Since $\{ \oo{\ul{u}[n]}\}_{n>0}\subseteq \wt{Q}^{\le m}$ the set $R$ of group-reduced left factors of $\ul{u}$ is finite. This set gives rise to a finite rooted inverse tree $(\mathcal{M}(R), r)$ on $Q$ with root $r$, such that $u[n]$ is the label of a path in $\mathcal{M}(R)$ starting from $r$. Hence, $\ul{u}$ is the label of a (right) infinite path in $\mathcal{M}(R)$ starting from $r$. \\
$iv)\Rightarrow ii)$. Note that for a finite rooted inverse tree $(\mathcal{T},r)$, a vertex $v$ of $(\mathcal{T},r)$ is one-to-one correspondence with the shortest reduced word $u$ such that $r\vlongmapright{u}v$, and if there is a path $r\vlongmapright{w}v$, then $u=\oo{w}$. Hence, $\ul{u}[n]$ is contained in the set of vertices of $(\mathcal{T},r)$, i.e. $\{ \oo{\ul{u}[n]}\}_{n>0}$ is finite.
\end{proof}
\noindent We now show that essentially trivial points of $\wt{Q}^{\omega}$ are points in which the Schreier graphs are always finite. Therefore, the dynamics of $\mathcal{G}(\partial \mathrsfs{A} ^{-})$ on them is trivial, and as we will see they do not influence the algebraic structure of $\mathcal{G}(\mathrsfs{A})$. This is proven in the next theorem, but first we need some lemmata.
\begin{lemma}\label{lem: image of complete}
Let $ \mathrsfs{B}=(T,B,\cdot,\circ)$ be a bireversible transducer, if $\mathcal{D}=(\Gamma,v)$ is a complete inverse automaton on $B$, then for any $q\in T$ the language $\mathrsfs{B}_{q}(L[\mathcal{D}])$ is also defined by a complete inverse automaton $\mathcal{C}=(\Lambda,r)$ on $B$.
\end{lemma}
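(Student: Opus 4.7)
The plan is to build $\mathcal{C}$ as the connected component at the base of a product automaton of $\Gamma$ with the enriched transducer $\mathrsfs{B}^{-}$, which is itself bireversible by the lemma cited earlier. Concretely, set $V(\Lambda)=V(\Gamma)\times T$ and $r=(v,q)$, and declare an edge $(p,q')\xrightarrow{b}(p',q'')$ of $\Lambda$ for every $a\in\wt{B}$ such that $p\xrightarrow{a}p'$ is an edge of $\Gamma$ and $q'\xrightarrow{a|b}q''$ is an edge of $\mathrsfs{B}^{-}$; then $\mathcal{C}$ is the connected component of $r$ inside $\Lambda$.

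The verification that $\mathcal{C}$ is a complete inverse automaton on $B$ splits into three routine checks. Involutivity is inherited from both $\Gamma$ and $\mathrsfs{B}^{-}$, since reversing the defining triple sends $(a,\, p\to p',\, q'\xrightarrow{a|b}q'')$ to $(a^{-1},\, p'\to p,\, q''\xrightarrow{a^{-1}|b^{-1}}q')$. Determinism and completeness rely on the bireversibility of $\mathrsfs{B}^{-}$: for any pair $(q',b)$ the output-reversibility selects a unique $a\in\wt{B}$ and target $q''\in T$ with $q'\xrightarrow{a|b}q''$, and then the complete-inverse structure of $\Gamma$ yields a unique $p'$ with $p\xrightarrow{a}p'$, producing exactly one edge from $(p,q')$ labelled $b$.

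The substantive step is the language identification $L(\mathcal{C})=\mathrsfs{B}_{q}(L[\mathcal{D}])$. Unfolding paths in $\mathcal{C}$: a closed path at $r$ labelled $w$ corresponds to some $u\in\wt{B}^{*}$ with $v\xrightarrow{u}v$ in $\Gamma$ and $q\xrightarrow{u|w}q$ in $\mathrsfs{B}^{-}$, i.e.\ $u\in L[\mathcal{D}]$ and $w=q\circ u$, giving the inclusion $L(\mathcal{C})\subseteq\mathrsfs{B}_{q}(L[\mathcal{D}])$ at once. For the converse, given $u\in L[\mathcal{D}]$ with $w=q\circ u$, one must exhibit some $u'\in L[\mathcal{D}]$ satisfying both $q\circ u'=w$ and $q\cdot u'=q$. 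I expect the main obstacle to be precisely this step: bireversibility does ensure that $\mathrsfs{B}_{q}$ is bijective on $\wt{B}^{*}$ and that the actions $\cdot,\circ$ of $\wt{B}^{*}$ in $\mathrsfs{B}^{-}$ descend through $\sim$-equivalence, and the language of a complete inverse automaton is $\sim$-saturated; but combining these with the complete-inverse structure of $\Gamma$ to produce a stabilising representative $u'$ inside $L[\mathcal{D}]$ is where the argument must make essential use of bireversibility.
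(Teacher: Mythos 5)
Your construction coincides with the paper's: the authors turn $\mathcal{D}$ into an inverse transducer $\mathrsfs{D}$ with identity output and set $(\Lambda,r)=\left((\mathrsfs{D}\mathrsfs{B})_{\mathcal{O}},(v,q)\right)$, which, once the product of transducers is unpacked, is exactly your $\Lambda$ rooted at $(v,q)$; they obtain determinism, completeness and the involutive structure by citing closure of bireversibility under products, where you verify the same facts by hand, and both routes are sound. The inclusion you do prove, $L(\mathcal{C})\subseteq\mathrsfs{B}_{q}(L[\mathcal{D}])$, is correct.

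The step you flag as the obstacle is a genuine gap, and it sits exactly where the paper writes that ``by a simple computation'' one gets $\mathrsfs{B}_{q}(L[\mathcal{D}])=L\left((\mathrsfs{D}\mathrsfs{B})_{\mathcal{O}},(v,q)\right)$. Worse, the missing inclusion actually fails for this construction, so no choice of stabilising representative $u'$ can repair it. Take $T=\{s,t\}$, $B=\{0,1\}$ with transitions $s\fr{0}{1}t$, $s\fr{1}{0}t$, $t\fr{0}{0}s$, $t\fr{1}{1}s$; one checks this is bireversible. Let $\mathcal{D}$ be the one-vertex complete inverse automaton (the bouquet), so $L[\mathcal{D}]=\wt{B}^{*}$, and since $\mathrsfs{B}^{-}$ is invertible the map $\mathrsfs{B}_{s}$ is a length-preserving bijection of $\wt{B}^{*}$, whence $\mathrsfs{B}_{s}(L[\mathcal{D}])=\wt{B}^{*}$ contains the length-one word $0$. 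But every edge of $\mathrsfs{B}^{-}$ swaps $s$ and $t$, so every edge of $\Lambda$ swaps $(v,s)$ and $(v,t)$, every closed path at $r=(v,s)$ has even length, and $0\notin L(\mathcal{C})$. The statement of the lemma survives this example ($\wt{B}^{*}$ is of course recognized by the bouquet itself), but neither your argument nor the paper's produces the correct automaton. What does survive is the one-sided inclusion $L(\mathcal{C})\subseteq\mathrsfs{B}_{q}(L[\mathcal{D}])$ with $L(\mathcal{C})$ the language of a finite complete inverse automaton, i.e.\ the full preimage of a finite-index subgroup; if one could show independently that $\oo{\mathrsfs{B}_{q}(L[\mathcal{D}])}$ is a subgroup of $F_{B}$, that inclusion would already yield the finite-index conclusion needed in Lemma \ref{lem: finite index preimage}, but that subgroup property is an additional argument which neither your proposal nor the paper supplies.
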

\begin{proof}
To the complete inverse automaton $\mathcal{D}$ we can associate an inverse transducer $\mathrsfs{D}$ by adding the identity output:
$$
\forall b\in\wt{B}, q\mapright{b|b}q' \mbox{ is an edge in } \mathrsfs{D}\mbox{ if and only if }  q\mapright{b}q'\mbox{ is an edge in } \mathcal{D}
$$
Therefore, by a simple computation with products of transducers one gets
$$
\mathrsfs{B}_{q}(L[\mathcal{D}])=\left((\mathrsfs{D}\mathrsfs{B})_{\mathcal{O}}, (v,q)\right)
$$
Since both $\mathrsfs{D}$ and $\mathrsfs{B}$ are bireversible inverse transducers, then by \cite[Proposition 2]{DaRo14} $\mathrsfs{D}\mathrsfs{B}$ is also a bireversible inverse transducer. Hence, $(\Lambda,r)=\left((\mathrsfs{D}\mathrsfs{B})_{\mathcal{O}}, (v,q)\right)$ is a complete inverse automaton recognizing $\mathrsfs{B}_{q}(L[\mathcal{D}])$, and this concludes the proof.
\end{proof}
\noindent Recall that $R_{T}$ is the set of reduced words on $\wt{T}^{*}$, we have the following lemma.
\begin{lemma}\label{lem: finite index preimage}
Let $ \mathrsfs{B}=(T,B,\cdot,\circ)$ be a bireversible automaton, let $H\le F_{T}$ be a finite index subgroup, and let $u\in B^{*}$. Then the set $\{w\in R_{T}: w\cdot u\in H\}$ is also a finite index subgroup of $F_{T}$ .
\end{lemma}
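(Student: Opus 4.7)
The plan is to translate the problem via the duality of transducers and then apply Lemma \ref{lem: image of complete} (iterated) to the dual. First, using the reversal identity $w\cdot u=(u^R\cdot w^R)^R$, where the right-hand side is the action in the bireversible inverse transducer $(\partial\mathrsfs{B})^-$ (bireversible by Proposition \ref{prop: dual prop} combined with the fact that the enriched-dual construction preserves bireversibility), I would note that $\oo{w\cdot u}\in H$ is equivalent to $\oo{u^R\cdot w^R}\in H^R$, where $H^R=\{h^R:h\in H\}$ is a finite-index subgroup of $F_T$: reversal is an anti-automorphism of $F_T$ fixing each generator, so it maps subgroups to subgroups and preserves the index, and it also commutes with reduction. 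Setting $w'=w^R$, we obtain $K=K_1^R$, where $K_1=\{w'\in R_T : \oo{u^R\cdot w'}\in H^R\}$, so it suffices to prove that $K_1$ is a finite-index subgroup of $F_T$.

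For this, I would represent $H^R$ by its Schreier coset graph $\mathcal{D}_{H^R}$, a finite complete inverse automaton on $\wt{T}$ with $L[\mathcal{D}_{H^R}]=\sigma^{-1}(H^R)$. The state $u^R\in B^*$ defines a tree automorphism $\alpha_{u^R}$ of $\wt{T}^*$ via the action of $(\partial\mathrsfs{B})^-$, and $K_1 = R_T \cap \alpha_{u^R}^{-1}(L[\mathcal{D}_{H^R}])$. Because $(\partial\mathrsfs{B})^-$ is invertible, the preimage under $\alpha_{u^R}$ coincides with the image under $\alpha_{u^R}^{-1}=\alpha_{(u^R)^{-1}}$, realised in the extended bireversible transducer $(\partial\mathrsfs{B})^-\sqcup[(\partial\mathrsfs{B})^-]^{-1}$ (bireversible as a disjoint union of bireversibles). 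Writing $(u^R)^{-1}=c_1\cdots c_n$ as a word over $\wt{B}$, I would iterate Lemma \ref{lem: image of complete} $n$ times, applied one letter at a time to the complete inverse language produced at the previous stage, to obtain a finite complete inverse automaton $\mathcal{C}$ on $\wt{T}$ satisfying $L[\mathcal{C}]=\alpha_{u^R}^{-1}(L[\mathcal{D}_{H^R}])$.

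To conclude, the language of a finite complete inverse automaton on $\wt{T}$ is closed under $\sim$, so $L[\mathcal{C}]=\sigma^{-1}(H_\mathcal{C})$ for some finite-index subgroup $H_\mathcal{C}\le F_T$; intersecting with $R_T$ then identifies $K_1$ with $H_\mathcal{C}$, whence $K=K_1^R$ is a finite-index subgroup of $F_T$ as required. The main technical obstacle will be the iteration of Lemma \ref{lem: image of complete} from a single generator to the arbitrary word $(u^R)^{-1}$ (a routine induction on word length from the one-letter case, or equivalently one application to a suitable power transducer), together with the bookkeeping needed to check that each of the auxiliary transducers $(\partial\mathrsfs{B})^-$, $[(\partial\mathrsfs{B})^-]^{-1}$, and $(\partial\mathrsfs{B})^-\sqcup[(\partial\mathrsfs{B})^-]^{-1}$ does inherit bireversibility; the essential content here is simply that bireversibility is preserved under the operations $\partial$, $(-)^-$, and disjoint union with the inverse.
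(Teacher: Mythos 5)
Your proposal is correct and follows essentially the same route as the paper's proof: translate the problem through the reversal identity to the action of the (inverse of the) enriched dual, use that reversal preserves finite index, represent the finite-index subgroup by a finite complete inverse automaton, and iterate Lemma \ref{lem: image of complete} one letter at a time by induction on $|u|$. The only differences are presentational (you work with the $\sim$-closed language $\sigma^{-1}(H^R)$ and intersect with $R_T$ at the end, whereas the paper applies the transducer to reduced representatives and uses $\mathrsfs{C}_v(\oo{L})=\oo{\mathrsfs{C}_v(L)}$), so the two arguments are the same in substance.
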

\begin{proof}
Let $ \partial\mathrsfs{B}=(B,T,\circ, \cdot)$, by Proposition \ref{prop: dual prop} it is bireversible. Hence, $\partial\mathrsfs{B}^{-}$ is also bireversible, and so its inverse $\mathrsfs{C}=(\partial\mathrsfs{B}^{-})\inv$ is a bireversible inverse transducer. It is not difficult to check that, by the bireversibility, $\mathrsfs{C}$ sends reduced words into reduced words, whence for any language $L\subseteq \wt{T}^{*}$, $v\in (B^{-1})^{*}$ we have
$$
\mathrsfs{C}_{v}(\oo{L})=\oo{\mathrsfs{C}_{v}(L)}
$$
Let $u=t_{m}\ldots t_{1}$, if we put $v=t_{m}^{-1}\ldots t_{1}^{-1}$, then by a simple computation we get
$$
\mathrsfs{C}_{v}(H^{R})^{R}=\{w\in R_{T}: w\cdot u\in H\}
$$
where $H^{R}=\{h^{R}:h\in H\}$. Let us prove that $\mathrsfs{C}_{v}(H^{R})^{R}$ is a finite index subgroup of $F_{T}$ by induction on the length of the word $v$. Note that for any finitely generated subgroup $D\le F_{T}$, since the Stallings automaton $\mathcal{S}_{t}(D)$ is inverse, then $\mathcal{S}_{t}(D^{R})$ is obtained by $\mathcal{S}_{t}(D)$ by simply reversing the direction of the edges, whence $D$ has finite index if and only if $D^{R}$ has finite index. Therefore, it is enough to show that $\mathrsfs{C}_{v}(H)$ is a finite index subgroup of $F_{T}$. Since $H$ has finite index, its Stallings automaton $\mathcal{S}_{t}(H)$ is a complete inverse $T$-automaton.
Furthermore, by Lemma \ref{lem: image of complete} $\mathrsfs{C}_{t_{1}^{-1}}(L[\mathcal{S}_{t}(H)])$ is defined by a complete inverse $T$-automaton $(\Lambda, r)$. Thus,
$$
\mathrsfs{C}_{t_{1}^{-1}}(H)=\mathrsfs{C}_{t_{1}^{-1}}(\oo{L[\mathcal{S}_{t}(H)]})=\oo{\mathrsfs{C}_{t_{1}^{-1}}(L[\mathcal{S}_{t}(H)])}=\oo{L[(\Lambda, r)]}=H_{0}
$$
is a finite index subgroup of $F_{T}$, which proves the base case of our claim. Furthermore, using the induction hypothesis, we get that
$$
\mathrsfs{C}_{v}(H)=\mathrsfs{C}_{t_{m}^{-1}\ldots t_{2}^{-1}}\left (\mathrsfs{C}_{t_{1}^{-1}}(H)\right )=\mathrsfs{C}_{t_{m}^{-1}\ldots t_{2}^{-1}}\left (H_{0}\right )
$$
is also a finite index subgroup of $F_{T}$, and this concludes the proof of the lemma.
\end{proof}

\begin{theorem}\label{theo: finiteness essentially trivial}
For any essentially trivial point $\ul{v}\in\wt{Q}^{*}$, $\Sch(\ul{v})$ is finite.
\end{theorem}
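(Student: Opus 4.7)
The plan is to bound the orbit of $\ul{v}$ under $\mathcal{G}((\partial\mathrsfs{A})^{-})$ by a constant depending only on $|Q|$ and on the finite set $V:=\{\oo{\ul{v}[n]}:n\ge 0\}$, which is indeed finite by Proposition \ref{prop: charact ess trivial}(iii). The key observation is that, because $(\partial\mathrsfs{A})^{-}$ is bireversible, its state dynamics forgets free cancellations, and consequently the action of $g\in\mathcal{G}((\partial\mathrsfs{A})^{-})$ on $\ul{v}$ is completely determined by a finite datum indexed by $V\times\wt{Q}$.

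First I would verify directly from the enrichment construction that reading $qq^{-1}$ in $(\partial\mathrsfs{A})^{-}$ starting at any state $a$ returns the machine to $a$ and outputs the cancelling pair $(a\cdot q)(a\cdot q)^{-1}$: the enriched edge $b\longfr{q^{-1}}{q'^{-1}}a$ is added precisely as the inverse of $a\longfr{q}{q'}b$, so the transition $a\circ q=b$ is undone by $b\circ q^{-1}=a$ and the outputs cancel accordingly. By a straightforward induction on the number of reductions in a word $u\in\wt{Q}^{*}$, this yields the following key fact: viewing $g\in\mathcal{G}((\partial\mathrsfs{A})^{-})$ as a state in the appropriate power of $(\partial\mathrsfs{A})^{-}$, the state reached after reading $u$ depends only on the reduced form $\oo{u}\in F_{Q}$. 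I denote this state by $g_{\oo{u}}$.

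Next I examine the letter-by-letter action: the $(j+1)$-th letter of $g\cdot\ul{v}$ equals $g_{\oo{\ul{v}[j]}}\cdot\ul{v}_{j+1}$; since $\oo{\ul{v}[j]}\in V$ by essential triviality and $\ul{v}_{j+1}\in\wt{Q}$, the whole infinite word $g\cdot\ul{v}$ is completely determined by the finite function $f_{g}\colon V\times\wt{Q}\to\wt{Q}$ given by $f_{g}(v,q)=g_{v}\cdot q$. Since there are at most $|\wt{Q}|^{|V|\cdot|\wt{Q}|}$ such functions, the assignment $g\mapsto g\cdot\ul{v}$ factors through a set of bounded cardinality, which proves that the orbit of $\ul{v}$, and hence the vertex set of $\Sch(\ul{v})$, is finite.

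The main obstacle I anticipate is the rigorous propagation of the cancellation property from a single generator $a\in A$ to arbitrary elements of $\mathcal{G}((\partial\mathrsfs{A})^{-})$. Concretely, one must show that the $k$-th power of the enriched dual inherits bireversibility (as in \cite[Proposition 2]{DaRo14}) and hence that reading $qq^{-1}$ at a state tuple returns to the same tuple, so that $g_{\oo{u}}$ is well-defined for every $g$ and every $u$. Once this is in place, the counting argument above requires only straightforward bookkeeping and yields the desired finiteness.
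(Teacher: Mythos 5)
Your argument is correct, but it takes a genuinely different route from the paper's. The paper characterizes an essentially trivial point as the label of an infinite path in a finite rooted inverse tree $(\mathcal{T},r)$ (Proposition \ref{prop: charact ess trivial}(iv)) and then, by induction on the number of vertices of $\mathcal{T}$, constructs a finite-index subgroup of the relevant free group contained in the preimage of the stabilizer; that induction decomposes loop labels at a vertex into alternating products over subtrees and leans on Lemma \ref{lem: image of complete} and Lemma \ref{lem: finite index preimage} (images of complete inverse automata, and finiteness of index of $\{w: w\circ u\in H\}$ via Stallings automata), finishing with an intersection of finitely many finite-index subgroups. You instead count the orbit directly: since reading a cancelling pair $qq^{-1}$ in the enriched dual --- and, using bireversibility, in its inverse and in every power --- returns each state tuple to itself while emitting a cancelling pair, the state reached after processing a prefix $\ul{v}[j]$ depends only on $\oo{\ul{v}[j]}$, which by Proposition \ref{prop: charact ess trivial}(ii) ranges over a finite set $V$; hence $g\cdot\ul{v}$ is determined by the finite function $f_{g}\colon V\times\wt{Q}\to\wt{Q}$, and the orbit has at most $|\wt{Q}|^{|V|\cdot|\wt{Q}|}$ elements. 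The obstacle you flag (propagating the cancellation property from single states of $A$ to tuples over $\wt{A}$) is the right one to worry about, and it is surmountable exactly as you indicate: the single-state case for $A^{-1}$ uses output-reversibility of $\partial\mathrsfs{A}$, and the tuple case follows by induction on the tuple length because the intermediate outputs again carry cancelling pairs at the corresponding positions. Your route is more elementary, bypasses the two auxiliary lemmas and the Stallings machinery entirely, and yields an explicit bound on $\|\Sch(\ul{v})\|$; what the paper's proof buys in exchange is a uniform statement, namely a single finite-index subgroup stabilizing simultaneously every infinite path issuing from the root of the given finite tree, rather than a bound for one point at a time.
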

\begin{proof}
Let $\partial \mathrsfs{A} ^{-}=(A,\wt{Q},\circ, \cdot)$, and put $D=\mathcal{G}(\partial \mathrsfs{A} ^{-})$, and let $\pi:F_{Q}\rightarrow D$ be the canonical map. By Proposition \ref{prop: charact ess trivial} it is enough to prove that, given a rooted tree $(\mathcal{T},r)$ there is a finite index subgroup $H\le F_{Q}$ such that, for any (right) infinite word $\ul{u}\in  \wt{Q}^{\omega}$ labeling a (right) infinite path staring from $r$, we have $H\subseteq \pi^{-1}( \St_{D}(\ul{u}))$. We first prove the following claim:
\begin{itemize}
\item[\textbf{C}:] for any vertex $v$ of $\mathcal{T}$, there is a finite index subgroup $H_{v}\le F_{Q}$ such that $H_{v}\subseteq \pi^{-1}( \St_{D}(u))$ for any $u\in L[(\mathcal{T},v)]$.
\end{itemize}
We prove this claim using an induction on the number of vertices of $\mathcal{T}$. Indeed, if $\mathcal{T}$ consists of just two vertices $v,v'$ connected by an edge $v\mapright{q}v'$, $q\in\wt{Q}$, then for any $u\in L[(\mathcal{T},v)]$, $u=(qq^{-1})^{m}$ for some $m\ge 0$, whence in this case it is clear that $\pi^{-1}(\St_{D}(q))$ is a finite index subgroup stabilizing $(qq^{-1})^{m}$ for any $m\ge 0$. Hence, in this case our statement holds. Therefore, we can assume that $\mathcal{T}$ has more than two vertices. We consider the following two cases:
\begin{itemize}
\item We have two distinct edges $v\mapright{q}v'$, $v\mapright{p}v''$, $q,p\in \wt{Q}$. In this case, we may consider any two maximal sub-trees $\mathcal{T}_{1},\mathcal{T}_{2}$ of $\mathcal{T}$ having just the vertex $v$ in common. Thus, any $u\in L[(\mathcal{T},v)]$ has a unique decomposition of alternating elements of $L[(\mathcal{T}_{1},v)]$ and $ L[(\mathcal{T}_{2},v)]$, i.e.
\begin{equation}\label{eq: alternating fact}
u=u_{1}\ldots u_{m}
\end{equation}
such that if $u_{i}\in L[(\mathcal{T}_{j},v)]$, then $u_{i+1}\in L[(\mathcal{T}_{3-j},v)]$ for all $i=1,\ldots, m-1$ and some $j\in\{1,2\}$. Since the number of vertices of  $\mathcal{T}_{1}$ and  $\mathcal{T}_{2}$ is strictly less then the one of $\mathcal{T}$, by the induction hypothesis we get that there are finite index subgroups $K_{1},K_{2}\le F_{Q}$ stabilizing all the elements in $L[(\mathcal{T}_{1},v)]$, $ L[(\mathcal{T}_{2},v)]$, respectively. Since each element $u_{i}$ appearing in the factorization (\ref{eq: alternating fact}) satisfies $\oo{u_{i}}=1$, then it is straightforward to check that $K_{1}\cap K_{2}$ is a finite index subgroup of $F_{Q}$ stabilizing $u$, and therefore it stabilizes all the elements in $ L[(\mathcal{T},v)]$.
\item Assume that there is just one edge $v\mapright{q}v'$ in $\mathcal{T}$. Since there are more then two vertices, there is an edge $v'\mapright{p}v''$, for some $p\in\wt{Q}\setminus\{q^{-1}\}$. Consider the sub-tree $\mathcal{T}'$ of $\mathcal{T}$, obtained from $\mathcal{T}$ by erasing the two edges $v\mapright{q}v'$, $v'\mapright{q\inv}v$. It is not difficult to check that for any $u\in L[(\mathcal{T},v)]$ we have the following unique factorization:
\begin{equation}\label{eq: alternating fact 2}
u=(qu_{1}q^{-1})\ldots (qu_{m}q^{-1})
\end{equation}
for some $u_{i}\in L[(\mathcal{T}',v')]$, $i=1,\ldots, m$. Since the number of vertices of $\mathcal{T}'$ is strictly less then the one of $\mathcal{T}$, by the induction hypothesis we get that there is a finite index subgroup $K\le F_{Q}$ stabilizing the elements of $ L[(\mathcal{T}',v')]$. Therefore, since each $u_{i}$ appearing in (\ref{eq: alternating fact 2}) satisfies $\oo{u_{i}}=1$, it is straightforward to check that the set:
$$
M=\{w\in R_{Q}: w\circ q\in K\}\cap \pi^{-1}(\St_{D}(q))
$$
stabilizes all the elements in $ L[(\mathcal{T},v)]$, and by Lemma \ref{lem: finite index preimage} $M\le F_{Q}$ has finite index, and this concludes the proof of the claim \textbf{C}.
\end{itemize}
Fixed a root $r$, any vertex $v$ of $\mathcal{T}$ is identified with the unique reduced word $v$ connecting $r$ to $v$, let $V(\mathcal{T}, r)$ denote this set, and for any $v\in V(\mathcal{T})$, $H_{v}$ denotes the finite index subgroup of claim \textbf{C}. By Lemma \ref{lem: finite index preimage}
$$
H=\bigcap_{u\in V(\mathcal{T})}\{w\in R_{Q}: w\circ u\in H_{v} \}
$$
is also a finite index subgroup of $F_{Q}$. We show that for any path $r\mapright{u}v$, $u\in \wt{Q}^{*}$, in $\mathcal{T}$, $H$ stabilizes $u$. This fact clearly implies that $H\subseteq \pi^{-1}( \St_{D}(\ul{u}))$ for any infinite word $\ul{u}\in  \wt{Q}^{\omega}$ labeling a (right) infinite path staring from $r$. Consider the unique reduce path in $\mathcal{T}$:
$$
r\mapright{u}v=r=v_{0}\mapright{x_{1}}v_{1}\mapright{x_{2}}\ldots v_{i}\mapright{x_{i}}v_{i+1}\mapright{x_{i+1}}\ldots v_{n-1}\mapright{x_{n}}v_{n}
$$
It is not difficult to check that we have the following factorization
$$
u=u_{0}x_{1}u_{1}x_{2}\ldots x_{i}u_{i}\ldots x_{n}u_{n}
$$
where $u_{i}\in L[(\mathcal{T},v_{i})]$. Since $\oo{u_{i}}=1$ it is straightforward to check that $u$ is stabilized by
$$
H_{v_{0}}\cap \left(\bigcap_{j=1}^{n}\left \{w\in R_{Q}: w\circ (x_{1}\ldots x_{j})\in H_{x_{0}\ldots x_{j}}\right \}\right)
$$
and this set contains $H$, whence $H$ stabilizes $u$ and this concludes the proof of the proposition.
\end{proof}
Since the action of $\mathcal{G}(\partial \mathrsfs{A}^{-})$ on $\wt{Q}^{\omega}$ is essentially-free (being $\partial \mathrsfs{A}^{-}$ bireversible) \cite{StVoVo2011}, i.e.
$$
m(\{x\in \wt{Q}^{\omega}: \ \St_{\mathcal{G}(\partial \mathrsfs{A}^{-})}(x)\neq \id \})=0,
$$
where $m$ is the uniform measure on $\wt{Q}^{\omega}$ (see \cite{DynSubgroup}), by Theorem \ref{theo: finiteness essentially trivial}, and the fact that there are bireversible transducers defining infinite groups generated by a set $Q$ with $|Q|\ge 2$, we immediately derive the following corollary.
\begin{corollary}
The set of essentially trivial points on $\wt{Q}^{\omega}$, $|Q|\ge 2$, has zero measure with respect to the uniform measure on $\wt{Q}^{\omega}$.
\end{corollary}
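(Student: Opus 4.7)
The plan is to embed the set of essentially trivial points into a null set (the set of points with nontrivial stabilizer under $\mathcal{G}(\partial\mathrsfs{A}^{-})$) and then invoke essential freeness. First I would fix a bireversible transducer $\mathrsfs{A}$ whose enriched-dual group $\mathcal{G}(\partial\mathrsfs{A}^{-})$ acts on $\wt{Q}^{\omega}$ and is \emph{infinite}. By Proposition \ref{prop: dual prop} duality preserves bireversibility, and by the Lemma of \cite{DaRo14} invoked in the preliminaries the enriched dual of a bireversible machine is again bireversible; combining this with Theorem \ref{theo: enriched is equal} gives $\mathcal{G}(\partial\mathrsfs{A}^{-})\simeq \mathcal{G}(\partial\mathrsfs{A})$. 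Hence, using the existence of bireversible transducers on $|Q|\ge 2$ generators defining infinite groups, one can arrange such an $\mathrsfs{A}$ so that $\mathcal{G}(\partial\mathrsfs{A}^{-})$ is infinite.

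Next I would feed Theorem \ref{theo: finiteness essentially trivial} into the orbit-stabilizer correspondence. For every essentially trivial $\ul{v}\in\wt{Q}^{\omega}$ the Schreier graph $\Sch(\ul{v})$ is finite, and since (for bireversible $\mathrsfs{A}$) $\Sch(\ul{v})$ is precisely the orbit of $\ul{v}$ under $\mathcal{G}(\partial\mathrsfs{A}^{-})$, the stabilizer $\St_{\mathcal{G}(\partial\mathrsfs{A}^{-})}(\ul{v})$ has finite index in $\mathcal{G}(\partial\mathrsfs{A}^{-})$. Because $\mathcal{G}(\partial\mathrsfs{A}^{-})$ is infinite, any finite-index subgroup is infinite; in particular $\St_{\mathcal{G}(\partial\mathrsfs{A}^{-})}(\ul{v})\neq\id$.

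Finally I would appeal to the essential-freeness result of \cite{StVoVo2011}, which applies since $\partial\mathrsfs{A}^{-}$ is bireversible: the set
$$
\{x\in \wt{Q}^{\omega}:\St_{\mathcal{G}(\partial\mathrsfs{A}^{-})}(x)\neq \id\}
$$
has measure zero with respect to the uniform measure $m$ on $\wt{Q}^{\omega}$. The previous step shows that every essentially trivial point belongs to this null set, so the set of essentially trivial points has $m$-measure zero, proving the corollary.

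The only delicate step is the very first one: producing, for each $|Q|\ge 2$, a bireversible $\mathrsfs{A}$ for which $\mathcal{G}(\partial\mathrsfs{A}^{-})$ is infinite. This is not a serious obstacle here because it is exactly the input supplied by the authors right before the corollary (existence of infinite bireversible automaton groups on two or more generators); the remainder of the argument is a direct chaining of essential freeness, Theorem \ref{theo: finiteness essentially trivial}, and the orbit-stabilizer principle.
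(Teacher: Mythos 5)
Your argument is correct and is essentially the paper's own: the authors derive the corollary in exactly this way, combining the essential freeness of the action of $\mathcal{G}(\partial\mathrsfs{A}^{-})$ on $\wt{Q}^{\omega}$ from \cite{StVoVo2011}, Theorem \ref{theo: finiteness essentially trivial} (finite Schreier graphs at essentially trivial points, hence finite-index and therefore non-trivial stabilizers once the group is infinite), and the existence of bireversible transducers on $|Q|\ge 2$ generators defining infinite groups. The only point worth making explicit, which you already flag, is that the choice of such an infinite example is part of the hypothesis being imported, not something proved here.
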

Excluding essentially free points, whose dynamics is trivial, we now show that the dynamics on the complement of this set (the non-trivial points) is enough to characterize the freeness of the group $\mathcal{G}(\mathrsfs{A})$. We devote the rest of this section to prove this result.
\\
The following proposition specializes Lemma \ref{lem: periodic in finite schr} to the bireversibile case.
\begin{proposition}\label{prop: almost implies periodic}
Let $\mathrsfs{A}=(Q,A,\cdot,\circ)$ be a bireversible automaton, and let $S=\mathcal{S}(\partial \mathrsfs{A}^{-})$. Suppose that $\|\Gamma(S,A,\wt{Q}^{\omega},xy^{\omega})\|<\infty$ for some $x,y\in \wt{Q}^{*}$, hence there is an integer $n$ such that
$$
\|\Sch(y^{\omega})\|=\|\Gamma(S,A,\wt{Q}^{\omega},y^{\omega})\|<\infty
$$
\end{proposition}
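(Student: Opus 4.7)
The plan is to reduce the finiteness of $\Gamma(S,A,\wt{Q}^{\omega},y^{\omega})$ to a direct surjection from the orbit of $xy^{\omega}$, using the bireversible structure of $\partial\mathrsfs{A}^{-}$. First I would observe that, since $\mathrsfs{A}$ is bireversible, so is $\partial\mathrsfs{A}$ (Proposition~\ref{prop: dual prop}), and hence so is its enriched dual $\partial\mathrsfs{A}^{-}$ (as already used in the text). In particular the input action $A\overset{\cdot}{\curvearrowleft}\wt{Q}$ is by permutations of $A$. A routine check (reversibility is preserved under the product of transducers, so each power $((\partial\mathrsfs{A}^{-})^{k})_{\mathcal{I}}$ is itself reversible) then gives that, for every $x\in\wt{Q}^{*}$ and every $k\ge 1$, the map
$$
\tau_{x}^{(k)}\colon A^{k}\rightarrow A^{k},\qquad w\mapsto w\cdot x,
$$
is a bijection. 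This is the crucial combinatorial input.

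Next I would use the standard transducer identity applied to $\partial\mathrsfs{A}^{-}$: for every $w\in A^{*}$,
$$
w\circ(xy^{\omega})=(w\circ x)\,\bigl((w\cdot x)\circ y^{\omega}\bigr).
$$
Since $|x|$ is fixed, each vertex of $\Gamma(S,A,\wt{Q}^{\omega},xy^{\omega})$ uniquely splits as a prefix in $\wt{Q}^{|x|}$ followed by an element of the orbit $S\cdot y^{\omega}$. This defines a projection
$$
\phi\colon S\cdot xy^{\omega}\longrightarrow S\cdot y^{\omega},\qquad
(w\circ x)\bigl((w\cdot x)\circ y^{\omega}\bigr)\longmapsto (w\cdot x)\circ y^{\omega}.
$$
Given an arbitrary element $v'\circ y^{\omega}\in S\cdot y^{\omega}$ with $v'\in A^{k}$, the bijectivity of $\tau_{x}^{(k)}$ furnishes $w\in A^{k}$ with $w\cdot x=v'$, and then $\phi(w\circ xy^{\omega})=v'\circ y^{\omega}$. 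So $\phi$ is surjective, and the hypothesis $\|\Gamma(S,A,\wt{Q}^{\omega},xy^{\omega})\|<\infty$ forces $|S\cdot y^{\omega}|<\infty$.

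Finally, since out-degrees in the orbital graph are bounded by $|A|$, the vertex-set finiteness just obtained yields $\|\Gamma(S,A,\wt{Q}^{\omega},y^{\omega})\|<\infty$, and the integer $n$ appearing in the statement is exactly its cardinality. The equality $\|\Sch(y^{\omega})\|=\|\Gamma(S,A,\wt{Q}^{\omega},y^{\omega})\|$ is then automatic from the orbital-graph/Schreier-graph identification for bireversible transducers recalled just before the statement. The only step that requires genuine care is the bijectivity of $\tau_{x}^{(k)}$, i.e. the fact that reversibility propagates to powers of $\partial\mathrsfs{A}^{-}$; everything else is a clean application of the fundamental identity $w\circ(uv)=(w\circ u)((w\cdot u)\circ v)$.
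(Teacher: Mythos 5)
Your proof is correct, and its engine is the same as the paper's: bireversibility of $\partial\mathrsfs{A}^{-}$ (hence of its powers) makes the state-transition map $A^{k}\rightarrow A^{k}$ induced by reading $x$ a bijection, so every tail $v'\cdot y^{\omega}$ is realized as the length-$|x|$ suffix-deletion of some vertex of $\Gamma(S,A,\wt{Q}^{\omega},xy^{\omega})$. This is exactly the content of the paper's sentence ``for any $w\in A^{*}$ there is a $u\in A^{*}$ such that $u\cdot x(y^{n})^{\omega}=x'\,w\cdot(y^{n})^{\omega}$.'' Where you genuinely diverge is in the packaging: the paper first runs Lemma~\ref{lem: periodic in finite schr} to show that every vertex of $\Gamma(S,A,\wt{Q}^{\omega},xy^{\omega})$ has the form $x'z^{\omega}$ with $|z|=n|y|$, where $n$ is the lcm of the periods, and only then deduces that every $w\cdot(y^{n})^{\omega}$ is such a $z^{\omega}$, of which there are finitely many. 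Your prefix-deletion surjection $\phi$ transfers finiteness directly and makes Lemma~\ref{lem: periodic in finite schr} and the lcm bookkeeping unnecessary for this statement; what it does not recover is the extra conclusion that the vertices of $\Gamma(S,A,\wt{Q}^{\omega},y^{\omega})$ are all periodic of period $n|y|$, which is presumably what the otherwise dangling integer $n$ in the statement is meant to record (your reading of $n$ as the cardinality is a reasonable repair, but it is not what the paper's argument produces). One caveat you should fix: you have silently inverted the conventions of Section~\ref{sec: preliminaries} for the coupled actions of the dual. For $w\in A^{*}$ and $v\in\wt{Q}^{*}$ the paper writes the boundary action defining the orbital graph as $w\cdot v$ and the state transition as $w\circ v$, so your key identity should read
$$
w\cdot(xy^{\omega})=(w\cdot x)\bigl((w\circ x)\cdot y^{\omega}\bigr),
$$
and your bijection $\tau_{x}^{(k)}$ is $w\mapsto w\circ x$. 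The argument is internally consistent and survives this translation unchanged, but as written it clashes with the notation the statement itself is using.
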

\begin{proof}
By Lemma \ref{lem: periodic in finite schr}, if $n$ is the least common multiple of the integers $k(z)$ for each vertex $hz^{\omega}$ of $\Gamma(S,A,\wt{Q}^{\omega},xy^{\omega})$, then for any $u\in A^{*}$ we have that $u\cdot x(y^{n})^{\omega}=x'z^{\omega}$ with $|x|=|x'|$ and $|z|=n|y|$. Since $\mathrsfs{A}$ is bireversible it is straightforward to check that for any $w\in A^{*}$ there is a $u\in A^{*}$ such that $u\cdot x(y^{n})^{\omega}=x'w\cdot (y^{n})^{\omega}$, whence
$$
x'z^{\omega}=u\cdot x(y^{n})^{\omega}=x'w\cdot (y^{n})^{\omega}
$$
Thus, for any $w\in A^{*}$, $w\cdot (y^{n})^{\omega}=z^{\omega}$, for some $z\in \wt{Q}^{*}$ with $|z|=n|y|$, whence the claim $\|\Gamma(S,A,\wt{Q}^{\omega},y^{\omega})\|<\infty$.
\end{proof}
Note that the Schreier graphs of $\partial \mathrsfs{A} ^{-}$ in the boundary can be seen as limit of the components in the powers of the dual of $\partial \mathrsfs{A} ^{-}$. Since $\partial(\partial \mathrsfs{A}^{-})= \mathrsfs{A}\sqcup \mathrsfs{A}^{-1}$, if we put $\wt{\mathrsfs{A}}=\partial(\partial \mathrsfs{A}^{-})= \mathrsfs{A}\sqcup \mathrsfs{A}^{-1}$, then we may consider the following growth function:
$$
\chi_{\wt{\mathrsfs{A}}}(n)=\min\left\{\left\|\left(\wt{\mathrsfs{A}}^{n}, q\right)\right \|: q\in \wt{Q}^{n}\mbox{ with }\oo{q}=q\right\}
$$
This function is monotonically increasing. The following theorem links the non-freeness of $\mathcal{G}(\mathrsfs{A})$ with both the growth of $\chi_{\wt{\mathrsfs{A}}}(n)$ and the dynamics of $\partial \mathrsfs{A} ^{-}$ on the essentially non-trivial points of the boundary $\wt{Q}^{\omega}$.
\begin{theorem}\label{theo: finiteness schreier}
Let $\mathrsfs{A}=(Q,A,\cdot,\circ)$ be a bireversible transducer. Let $m\ge 1$ be a positive integer. Then, the following are equivalent
\begin{enumerate}
  \item [i)]There is a finite Schreier graph $ \Sch(\ul{v})$, for some essentially non-trivial element $\ul{v}\in \wt{Q}^{\omega}$;
  \item[ii)] There exists an almost periodic element $\ul{v}=xy^{\omega}$ with $\oo{xy}=xy$ such that $|x|+|y|\le m+(2^{m}|A|\|\mathrsfs{A}\|^{m})^{|A|}$ and $\|\Sch(\ul{v})  \|= \chi_{\wt{\mathrsfs{A}}}(m)$;
  \item[iii)] $\chi_{\wt{\mathrsfs{A}}}(m)$=$\chi_{\wt{\mathrsfs{A}}}(m+i)$ for all $ i\le  (2^{m}|A|\|\mathrsfs{A}\|^{m})^{|A|}$;
  \item[iv)] $\chi_{\wt{\mathrsfs{A}}}(m)$=$\chi_{\wt{\mathrsfs{A}}}(m+i)$ for all $i\ge 0$;
  \item[v)] There exists a periodic element $y^{\omega}$ with $\oo{y}=y$ such that $\|\Sch(y^{\omega})  \|<\infty$;
  \item[vi)] $\mathcal{G}(\mathrsfs{A})=F_{Q}/N$ is not free with a non-trivial relation $y^{r}\in\wt{Q}^{*}$ for some positive integer $r$.
\end{enumerate}
\end{theorem}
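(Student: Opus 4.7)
The plan is to prove the equivalences via the cycle $(vi) \Leftrightarrow (v) \Rightarrow (iv) \Rightarrow (iii) \Rightarrow (ii) \Rightarrow (i) \Rightarrow (vi)$, leaning primarily on Theorem \ref{theo: characterization free} and Proposition \ref{prop: almost implies periodic} together with a pigeonhole count for the numerical bound. The anchor is $(vi) \Leftrightarrow (v)$: Theorem \ref{theo: characterization free} already characterises non-freeness as the existence of some $y \in \wt{Q}^{*}$ with $\oo{y} \neq 1$ and finite $\|\Gamma(S,A,\wt{Q}^{\omega},y^{\omega})\|$. To upgrade to the reduced form required by (v), given a relation $y^{r}$ I would replace $y$ by the cyclically reduced conjugate $c$ of $\oo{y^{r}}$: by normality of $N$, $c \in N \setminus \{1\}$ is a relation with $\oo{c}=c$, and applying Theorem \ref{theo: characterization free} $(i)\Rightarrow(ii)$ to $c$ (with exponent $1$) yields a finite $\Sch(c^{\omega})$. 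The converse $(v) \Rightarrow (vi)$ is immediate since a non-empty reduced $y$ has $\oo{y}=y\neq 1$, feeding Theorem \ref{theo: characterization free} $(ii)\Rightarrow(i)$.

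For the numerical conditions, $(v)\Rightarrow(iv)$ follows from monotonicity: once $y$ is cyclically reduced the prefixes $y^{\omega}[n]$ are reduced words sitting in orbits of $\wt{\mathrsfs{A}}^{n}$ of size bounded by $|\Sch(y^{\omega})|$, so $\chi_{\wt{\mathrsfs{A}}}$ is bounded and, being non-decreasing, stabilises. $(iv)\Rightarrow(iii)$ is trivial. For $(iii)\Rightarrow(ii)$ I would fix a reduced minimiser $q \in \wt{Q}^{m}$ realising $\chi_{\wt{\mathrsfs{A}}}(m)$ and, for each of the $|A|$ letters $a\in A$, track along successive reduced one-letter extensions the ``state'' consisting of the connected component of the extension in $\wt{\mathrsfs{A}}^{m+i}$ together with the distinguished vertex. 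This object takes at most $2^{m}|A|\|\mathrsfs{A}\|^{m}$ values per letter and hence $(2^{m}|A|\|\mathrsfs{A}\|^{m})^{|A|}$ joint values, so over that many consecutive stabilisations pigeonhole forces a repetition; unfolding produces an almost periodic $\ul{v}=xy^{\omega}$ with $\oo{xy}=xy$, $|x|+|y|\le m+(2^{m}|A|\|\mathrsfs{A}\|^{m})^{|A|}$, and $\|\Sch(\ul{v})\|=\chi_{\wt{\mathrsfs{A}}}(m)$. Finally $(ii)\Rightarrow(i)$ is routine since the non-empty reduced $y$ forces $|\oo{y^{n}}|\to\infty$, making $\ul{v}$ essentially non-trivial while $\|\Sch(\ul{v})\|=\chi_{\wt{\mathrsfs{A}}}(m)<\infty$.

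To close the cycle with $(i)\Rightarrow(vi)$, given finite $\Sch(\ul{v})$ for essentially non-trivial $\ul{v}$, the action of $\mathcal{G}(\partial\mathrsfs{A}^{-})$ on the finite orbit factors through a finite quotient, so pigeonholing the single-letter action of each $a\in A$ on iterates of $\ul{v}$ yields an almost periodic orbit element $xz^{\omega}$; the essential non-triviality of $\ul{v}$, combined with Proposition \ref{prop: charact ess trivial}, ensures that one can arrange $\oo{z}\neq 1$, and Proposition \ref{prop: almost implies periodic} then delivers finite $\Sch(z^{\omega})$, whence Theorem \ref{theo: characterization free} gives (vi). The main obstacle is the pigeonhole step $(iii)\Rightarrow(ii)$: isolating the right notion of ``state'' so that its count matches exactly $(2^{m}|A|\|\mathrsfs{A}\|^{m})^{|A|}$ while a state-repetition actually produces a reduced almost periodic extension realising the orbital size $\chi_{\wt{\mathrsfs{A}}}(m)$. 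A secondary difficulty in $(i)\Rightarrow(vi)$ is guaranteeing that the extracted almost periodic element has period with non-trivial image in $F_{Q}$, which crucially uses bireversibility via Proposition \ref{prop: almost implies periodic}.
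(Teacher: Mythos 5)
Your cycle agrees with the paper's proof on most edges: the anchor $(vi)\Leftrightarrow(v)$ via Theorem \ref{theo: characterization free} (your explicit cyclic-reduction step is a detail the paper glosses over), the trivial $(iv)\Rightarrow(iii)$, the pigeonhole $(iii)\Rightarrow(ii)$ on pointed components of $\wt{\mathrsfs{A}}^{m+i}$ viewed as transducers (your ``state'' is essentially the paper's object, and the count lands on the same bound, though the paper counts output-labellings over a fixed input automaton rather than ``values per letter''), the step $(ii)\Rightarrow(i)$, and the use of Proposition \ref{prop: almost implies periodic} to pass from an almost periodic point to a periodic one.

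The genuine gap is your $(i)\Rightarrow(vi)$. Finiteness of $\Sch(\ul{v})$ for an essentially non-trivial $\ul{v}$ gives a finite orbit but no control on the combinatorial structure of $\ul{v}$ or of the other points in that orbit: the orbit elements are the words $u\cdot\ul{v}$, $u\in A^{*}$, and if $\ul{v}$ is aperiodic then so, in general, are all of them, so ``pigeonholing the single-letter action on iterates of $\ul{v}$'' cannot produce an almost periodic orbit element. Both Lemma \ref{lem: periodic in finite schr} and Proposition \ref{prop: almost implies periodic} take an (almost) periodic point as \emph{input}; neither can manufacture one from an arbitrary point with finite orbit. The paper closes the cycle differently, via $(i)\Rightarrow(iv)$: if $\chi_{\wt{\mathrsfs{A}}}(m+i)\to\infty$ then every infinite \emph{reduced} word has infinite Schreier graph, and then the inequality $\|\Sch(\oo{u})\|\le\|\Sch(u)\|$, applied to a subsequence of reduced prefixes of $\ul{v}$ converging to a point of $\widehat{F_Q}\setminus F_{Q}$, forces $\|\Sch(\ul{v})\|=\infty$ for every essentially non-trivial $\ul{v}$. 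Periodicity is only manufactured afterwards, in $(iii)\Rightarrow(ii)$, by the pigeonhole. You need to re-route $(i)$ into the growth-function condition rather than aim directly at $(v)$ or $(vi)$. A secondary remark: your $(v)\Rightarrow(iv)$, exactly like the paper's own $(i)\Rightarrow(iv)$, only yields that $\chi_{\wt{\mathrsfs{A}}}$ is bounded and hence eventually constant, not that $\chi_{\wt{\mathrsfs{A}}}(m)=\chi_{\wt{\mathrsfs{A}}}(m+i)$ for all $i\ge 0$ from the arbitrarily given $m$; you inherit this weakness from the paper rather than introducing it, but the uniformity in $m$ is not actually established by either argument.
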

\begin{proof}
$i)\Rightarrow iv)$.
Using the definitions it is straightforward to check that the following relation between Schreiers graphs:
\begin{equation}\label{eq: upper reduced}
\|\Sch(\oo{u})\|\le \|\Sch(u)\|
\end{equation}
holds for all $u\in\wt{Q}^{*}$. Since $\chi_{\wt{\mathrsfs{A}}}$ is monotonically increasing, if $\lim_{i\to\infty}\chi_{\wt{\mathrsfs{A}}}(m+i)=\infty$, then by Theorem \ref{thm:schreier} for each infinite reduced word $\ul{u}\in \wt{Q}^{\omega}$ we have $\|\Sch(\ul{u})\|=\infty$. We now prove that for any essentially non-trivial element $\ul{v}$, $\| \Sch(\ul{v})\|=\infty$. Let $\{u_{n_{i}}\}_{i>0}$ be the subsequence of $\{\oo{\ul{v}[n]}\}_{n>0}$ converging to a reduced element $\ul{u}$ of $\widehat{F_Q}\setminus F_{Q}$. Since $\oo{\ul{v}[n_{i}]}=u_{n_{i}}$, then by (\ref{eq: upper reduced})
$$
\| \Sch(u_{n_{i}})\|\le \| \Sch(\ul{v}[n_{i}])\|
$$
for all $i>0$. Since $\lim_{i\to \infty} u_{n_{i}}=\ul{u}\in \widehat{F_Q}\setminus F_{Q}$ and $u$ is reduce, then $\|\Sch(\ul{u})\|=\infty$, whence the claim
$$
\| \Sch(\ul{v})\|\ge \lim_{i\to \infty } \| \Sch(\ul{v}[n_{i}])\|\ge \lim_{i\to \infty }\| \Sch(u_{n_{i}})\|\ge\infty
$$
$iv)\Rightarrow iii)$. Trivial.\\
$iii)\Rightarrow ii)$. Condition $iii)$ implies that there is a connected component
$$
\left(\wt{\mathrsfs{A}}^{m}, (q_{1},\ldots,q_{m})\right)
$$
of cardinality $\chi_{\wt{\mathrsfs{A}}}(m)$ and a finite sequence $q_{m+i}$ of vertices of $\wt{\mathrsfs{A}}$, with $0\le i\le  (2^{m}|A|\|\mathrsfs{A}\|^{m})^{|A|}$, such that for all $0\le i\le  (2^{m}|A|\|\mathrsfs{A}\|^{m})^{|A|}$
$$
\|(\wt{\mathrsfs{A}}^{m+i}, q_{1}\ldots q_{m+i}))\|=\chi_{\wt{\mathrsfs{A}}}(m)
$$
and $\oo{q_{1}\ldots q_{m+i}}=q_{1}\ldots q_{m+i}$. Hence, by Propositions \ref{prop: immersion lang hom}, \ref{prop: inverse of products}, and \cite[Proposition 7 ]{DaRo14} this sequence of transducers have all isomorphic input automata:
\begin{equation}\label{eq: all isom input auto}
\left((\wt{\mathrsfs{A}}^{m+i})_{\mathcal{I}}, q_{1}\ldots q_{m+i}\right)\simeq \left((\wt{\mathrsfs{A}}^{m+i+1})_{\mathcal{I}}, q_{1}\ldots q_{m+i+1}\right)
\end{equation}
Since there are at most $(2^{m}|A|\|\mathrsfs{A}\|^{m})^{|A|}$ possible transducers with the same input automaton $\left((\wt{\mathrsfs{A}}^{m})_{\mathcal{I}}, q_{1}\ldots q_{m}\right)$, there are two integers $k,p$ with $k+p\le (2^{m}|A|\|\mathrsfs{A}\|^{m})^{|A|}$ such that the following isomorphism (as $\wt{A}\times\wt{A}$-automata)
\begin{equation}\label{eq: repetition as transducers}
\left(\wt{\mathrsfs{A}}^{m+k}, q_{1} \ldots q_{m+k}\right)\simeq \left(\wt{\mathrsfs{A}}^{m+k+p}, q_{1} \ldots q_{m+k+p}\right)
\end{equation}
holds. Consider the words $x=q_{1}\ldots q_{m+k}$, $y=q_{m+k+1}\ldots q_{m+k+p}$, and the almost periodic point $\ul{v}=xy^{\omega}$. Hence, $\oo{xy}=xy$, $|x|+|y|\le m+(2^{m}|A|\|\mathrsfs{A}\|^{m})^{|A|}$, and by (\ref{eq: repetition as transducers}) it is not difficult to check that
$$
\left(\wt{\mathrsfs{A}}^{m+k+p}, xy\right)\simeq \left(\wt{\mathrsfs{A}}^{m+k+tp}, xy^{t}\right)
$$
holds for any $t\ge 1$. Whence by Theorem \ref{thm:schreier} we obtain
$$
 \left((\wt{\mathrsfs{A}}^{m+k})_{\mathcal{I}}, x\right)^{-}\simeq \left(\wt{\mathrsfs{A}}^{\infty}_{\mathcal{I}}, xy^{\omega}\right)^{-}\simeq \Sch(xy^{\omega})
$$
is a finite Schreier graph with $\chi_{\wt{\mathrsfs{A}}}(m)$ vertices.\\
$ii)\Rightarrow i)$. It is enough to show that $xy^{\omega}$ is essentially non-free. Indeed, let $u\in\wt{Q}^{*}$ be the maximal prefix of $y$ such that $y=uzu^{-1}$. Since $\oo{xy}=xy$ it is straightforward to check that in $\{\oo{xy^{\omega}[n]}\}_{n>0}$ there is the subsequence $\{xuz^{m}\}_{m>0}$ of reduced words. Furthermore, it is obvious that $\lim_{m\to\infty}xuz^{m}=xuz^{\omega}\in \widehat{F_Q}\setminus F_{Q}$, i.e., $xy^{\omega}$ is essentially non-free.\\
$v)\Rightarrow i)$. Analogously as above, $y^{\omega}$ is essentially non-trivial.\\
$ii)\Rightarrow v)$. Since $\oo{xy}=xy$ we get $\oo{y}=y$. Furthermore, by Proposition \ref{prop: almost implies periodic} $\|\Sch(xy^{\omega}) \|<\infty$ implies $\|\Sch(y^{\omega}) \|<\infty$. \\
$v)\Leftrightarrow iv)$. Theorem \ref{theo: characterization free}.
\end{proof}

\subsection{Some general consequences}
In this section we derive some general consequences of Theorem \ref{theo: characterization free} and Theorem \ref{theo: finiteness schreier}. Note that Theorem \ref{theo: characterization free} shows that a relation $y^r=\id$ in $\mathcal{G}(\mathrsfs{A})$ corresponds to a finite orbital graph $\Gamma(S,A,\wt{Q}^{\omega},y^{\omega})$ for the action of the dual transducer. This enables us to give an elementary proof of the following known fact that can be also derived using Zelmanov's celebrated theorem \cite{Zelma90,Zelma91}.

\begin{corollary}
Let $\mathrsfs{A}=(Q,A,\cdot,\circ)$ be an invertible automaton, and let $G:=\mathcal{G}(\mathrsfs{A})$. If $G$ is a Burnside group with bounded exponent, then $G$ is finite.
\end{corollary}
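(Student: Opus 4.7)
The plan is to feed the Burnside hypothesis into Theorem \ref{theo: characterization free} to produce an abundance of finite $A$-orbits on periodic boundary points of the enriched dual, and then convert this into a bound on the finite-level quotients of $G$ via the inverse-limit description of the Cayley automaton in Theorem \ref{thm:schreier}. If $n$ is the exponent of $G = F_Q/N$, then for every $y \in \widetilde{Q}^*$ with $\overline{y} \neq 1$ the word $y^n$ is a non-trivial relation of $G$ (because $\sigma(y)^n = 1$), so Theorem \ref{theo: characterization free} applied with $r = n$ gives that the orbital graph $\Gamma(S, A, \widetilde{Q}^\omega, y^\omega)$ is finite, where $S = \mathcal{S}((\partial\mathrsfs{A})^-)$. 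Inspecting the proof of the implication $i) \Rightarrow ii)$ further, the finite $A$-invariant set $Y_y = \{u \cdot y^n : u \in A^*\}$ sits inside the maximal invariant subset $\mathcal{N} \subseteq \bigcap_{a \in A} L((\partial\mathrsfs{A})^-, a)$ of Theorem \ref{theo: charact relations}, so $\mathcal{N}$ already records a Burnside relator for every non-trivially reducing word $y$.

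For the decisive step, Theorem \ref{thm:schreier} (iii) identifies $\mathcal{C}(G, Q) \simeq \mathcal{N}^\infty = \varprojlim \mathcal{N}_k$, so that $|G| = \lim_k |\mathcal{N}_k| = \lim_k |G / \St_G(A^k)|$, and it suffices to prove that the sequence $\{|\mathcal{N}_k|\}$ stabilizes. The plan is to exploit the finiteness of the transition monoid of $(\partial\mathrsfs{A})^-$ (which is a permutation group on $A$, since $\partial\mathrsfs{A}$ is reversible by Proposition \ref{prop: dual prop}) together with the quantitative bound on the size of $Y_y$ supplied by the first step and Lemma \ref{lem: periodic in finite schr}, in order to run a pigeonhole/stabilization argument in the spirit of the implication $iii) \Rightarrow ii)$ of Theorem \ref{theo: finiteness schreier}, but this time at the level of orbital (rather than Schreier) graphs, so as to force the input automata $\mathcal{D}_k = ((\partial\mathrsfs{A})^-)^k_{\mathcal{I}}$ to collapse onto one another after finitely many levels.

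The main obstacle is executing this stabilization outside the bireversible world for which Theorem \ref{theo: finiteness schreier} was designed, and doing so without secretly invoking Zelmanov's Restricted Burnside Theorem via the residual finiteness of $G$. The elementary content of the corollary lies entirely in closing this gap through enriched-dual dynamics: one must leverage that every Burnside relator $y^n$ has already been placed in $\mathcal{N}$ in the first step, and then argue that no new cosets of $\St_G(A^{k+1})$ in $G$ appear beyond a level $k$ determined by the order of the transition group of $(\partial\mathrsfs{A})^-$ and the period bounds of Lemma \ref{lem: periodic in finite schr}, yielding $|\mathcal{N}_{k+1}| = |\mathcal{N}_k|$ and hence $|G| < \infty$.
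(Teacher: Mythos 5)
Your first step coincides with the paper's: from the exponent $N$ you get that $y^N$ is a non-trivial relation for every $y\in\widetilde{Q}^*$ with $\overline{y}\neq 1$, hence every periodic point $y^\omega$ has a finite orbital graph under $S=\mathcal{S}((\partial\mathrsfs{A})^-)$. The problem is the second half, which you yourself label ``the main obstacle'' and then do not execute: you never prove that $\{|\mathcal{N}_k|\}$ stabilizes. As written, the proposed pigeonhole has nothing to pigeonhole on, because step one only gives \emph{finiteness} of each $\Gamma(S,A,\widetilde{Q}^\omega,y^\omega)$, not a bound that is \emph{uniform} in $y$. That uniformity is the crux of the paper's argument: since the single exponent $N$ works for all $g$ simultaneously, the sets $Y_{z}=\{a\cdot z^{N}:a\in A^{*}\}$ are bounded by one constant $M$ independent of $z$; the paper then extends the bound $M$ from periodic points to \emph{all} $\underline{u}\in\widetilde{Q}^{\omega}$ by a short contradiction on prefixes (an orbit of $\underline{u}$ of size $>M$ would force some prefix $\underline{u}[n]$ to have an orbit of size $>M$, contradicting the bound for the periodic point $\underline{u}[n]^{\omega}$), and finally invokes \cite[Corollary 1]{DaRo14}, which says precisely that a uniform bound on all boundary orbital graphs of the enriched dual forces $\mathcal{G}(\mathrsfs{A})$ to be finite. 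None of these three ingredients appears in your proposal.

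Beyond the missing uniformity, the route you sketch is aimed at the wrong objects. The finite orbital graphs from step one are orbits of $A^{*}$ acting on $\widetilde{Q}^{\omega}$, i.e.\ connected components of powers of $\mathrsfs{A}\sqcup\mathrsfs{A}^{-1}$, whereas $\mathcal{D}_{k}=((\partial\mathrsfs{A})^{-})^{k}_{\mathcal{I}}$ has vertex set $A^{k}$ and its components are orbits of $\widetilde{Q}^{*}$ on $A^{k}$; bounding the former does not let you ``collapse'' the latter level by level, and $|\mathcal{N}_{k}|=|G/\St_{G}(k)|$ is the size of the \emph{product} of all components of $\mathcal{D}_{k}$, which a bound on individual components cannot control. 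Moreover, the implication $iii)\Rightarrow ii)$ of Theorem \ref{theo: finiteness schreier} that you want to imitate runs in the opposite direction (it assumes stabilization of the growth function and produces a finite Schreier graph) and relies on bireversibility, which you do not have here. So the gap is genuine: the elementary bridge from ``all boundary orbital graphs of the enriched dual have at most $M$ vertices'' to ``$G$ is finite'' is exactly \cite[Corollary 1]{DaRo14} (only finitely many transducers with at most $M$ states exist, so only finitely many section maps, hence finitely many elements of $G$), and your proposal supplies neither it nor a working substitute.
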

\begin{proof}
By assumption there exists $N\in \mathbb{N}$ such that for every $g\in G$, $g^N=\id$. Thus it is easy to see that the cardinality of the sets $Y_{z}=\{a\cdot z^{N}: a\in A^{\ast}\}$, $z\in \wt{Q}^{*}$, are finite and uniformly bounded by some constant $M>0$. Hence Theorem \ref{theo: characterization free} yields that the cardinality of the orbital graph of $z^{\omega}$ is bounded by $M$. The claim follows if we prove that all orbital graphs of elements $\ul{u}\in \widetilde{Q}^{\omega}$ have a number of vertices less than $M$ \cite[Corollary 1]{DaRo14}. Suppose that this is not the case, and there exists a non-periodic element $\ul{u}=u_1u_2\cdots\in \widetilde{Q}^{\omega}$ such that $\|\Gamma(S,A,\wt{Q}^{\omega},\ul{u}) \|>M$. This implies that there is a large enough $n=n(M)$ such that the orbit of the prefix $\ul{u}[n]=u_1\cdots u_n$ under the action of the dual transducer contains more that $M$ elements. This is absurd, because
$$
M\geq \|\Gamma(S,A,\wt{Q}^{\omega},\ul{u}[n]^{\omega}) \|>\|\Gamma(S,A,\wt{Q}^{\omega},\ul{u}[n]) \|>M
$$
\end{proof}
Let $\mathrsfs{A}=(Q,A,\cdot,\circ)$ be an invertible automaton, and let $\mathcal{G}(\mathrsfs{A})=F_{Q}/N$. The set of ``positive relations'' of $\mathcal{G}(\mathrsfs{A})$ is given by the set
$$
\mathcal{P}(\mathrsfs{A})=Q^{+}\cap \sigma^{-1}(N)
$$
where we recall that $\sigma:\wt{Q}^{*}\rightarrow F_{Q}$ is the canonical homomorphism. Note that $\mathcal{P}(\mathrsfs{A})=\emptyset$ implies that $\mathcal{S}(\mathrsfs{A})$ is torsion-free and therefore infinite. The following two corollaries of Theorem \ref{theo: characterization free} give some sufficient conditions not to have any positive relations.
\begin{corollary}\label{cor: trans implies positive free}
If $\mathcal{S}(\partial\mathrsfs{A})$ acts spherically transitively on $Q^{*}$, then $\mathcal{P}(\mathrsfs{A})=\emptyset$.
\end{corollary}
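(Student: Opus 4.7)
My approach is to argue by contradiction via Theorem~\ref{theo: characterization free}. Suppose there exists $y\in Q^{+}\cap\sigma^{-1}(N)$. Since $y$ is a non-empty positive word, it is already reduced, so $\oo{y}=y\neq 1$ in $F_{Q}$, which means $y=y^{1}$ is a non-trivial relation of $\mathcal{G}(\mathrsfs{A})$ in the sense of Theorem~\ref{theo: characterization free}. Applying that theorem with $r=1$ produces the key finiteness statement: the orbital graph $\Gamma(S,A,\wt{Q}^{\omega},y^{\omega})$ is finite, where $S=\mathcal{S}((\partial\mathrsfs{A})^{-})$.

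The next step I would carry out is to ``descend'' this finite orbital graph from the enriched dual to the dual itself. The crucial observation is that $(\partial\mathrsfs{A})^{-}$ is built from $\partial\mathrsfs{A}$ by adding only edges whose input label belongs to $Q^{-1}$. Hence starting from a point of $Q^{\omega}$, the $S$-action never triggers any of these extra edges, and consequently the $S$-orbit of $y^{\omega}$ stays inside $Q^{\omega}$ and coincides with the $\mathcal{S}(\partial\mathrsfs{A})$-orbit $\mathcal{O}\subseteq Q^{\omega}$. In particular $\|\mathcal{O}\| = \|\Gamma(S,A,\wt{Q}^{\omega},y^{\omega})\| <\infty$.

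To close the argument I would exploit spherical transitivity together with the fact that $\mathcal{S}(\partial\mathrsfs{A})$ acts by tree endomorphisms, so it commutes with the prefix projection $\pi_{n}\colon Q^{\omega}\rightarrow Q^{n}$. Then $\pi_{n}(\mathcal{O})=\mathcal{S}(\partial\mathrsfs{A})\cdot y^{\omega}[n]=Q^{n}$ by the hypothesis, giving $\|\mathcal{O}\|\ge |Q|^{n}$ for all $n$. In the non-degenerate case $|Q|\ge 2$ this contradicts the finiteness of $\mathcal{O}$, so no such $y$ can exist. The main (and really the only) subtlety is the transfer in the middle paragraph: one must verify carefully that the $S$-action on $Q^{\omega}$ really does agree with the $\mathcal{S}(\partial\mathrsfs{A})$-action, but this is immediate from inspecting which edges of $(\partial\mathrsfs{A})^{-}$ carry $Q$-labels versus $Q^{-1}$-labels.
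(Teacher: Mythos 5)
Your argument is correct and is essentially the paper's proof read contrapositively: both rest on Theorem~\ref{theo: characterization free} combined with the observation that the action of $\mathcal{S}(\partial\mathrsfs{A}^{-})$ restricted to $Q^{\omega}$ agrees with that of $\mathcal{S}(\partial\mathrsfs{A})$, and on spherical transitivity forcing the orbit of $y^{\omega}$ to be infinite. The only difference is cosmetic: you spell out the prefix-projection argument for that last point (and flag the implicit $|Q|\ge 2$ assumption), which the paper leaves unstated.
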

\begin{proof}
If $\mathcal{S}(\partial\mathrsfs{A})$ acts spherically transitive on $Q^{*}$, then $\|\Gamma(\mathcal{S}(\partial\mathrsfs{A}),A,Q^{\omega},y^{\omega})\|=\infty$, for all $y\in Q^{+}$. Since the action of the semigroup $S=\mathcal{S}(\partial \mathrsfs{A}^{-})$ on $\wt{Q}^{\omega}$ restricted on $Q^{\omega}$ is the same as that of $\mathcal{S}(\partial\mathrsfs{A})$ on $Q^{\omega}$, we get $ \|\Gamma(S,A,\wt{Q}^{\omega},y^{\omega}) \|=\infty$, for all $y\in Q^{+}$. Hence by Theorem \ref{theo: characterization free} we get $\mathcal{P}(\mathrsfs{A})=\emptyset$.
\end{proof}
\noindent If $\mathrsfs{A}$ is a reversible invertible transducer (for short $RI$-transducer), then $\partial\mathrsfs{A}$ is an $RI$-transducer by Proposition \ref{prop: dual prop}, therefore it defines a group, hence in this case $\mathcal{S}(\partial\mathrsfs{A})$ acts spherically transitive on $Q^{*}$ if and only if $\mathcal{G}(\partial\mathrsfs{A})$ acts spherically transitive on $Q^{*}$. In this class we obtain this following sufficient condition not to have any positive relations.
\begin{corollary}\label{cor: not containing bireversible}
Let $\mathrsfs{A}$ be an $RI$-transducer. If $\mathrsfs{A}$ does not contain a bireversible connected component, then $\mathcal{P}(\mathrsfs{A})=\emptyset$.
\end{corollary}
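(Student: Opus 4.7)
The plan is to argue by contrapositive: starting from a positive relation $u\in\mathcal{P}(\mathrsfs{A})$, I would construct a bireversible connected component of $\mathrsfs{A}$. Setting $n=|u|$, the idea is to consider the orbit
\[
Y:=\{u\cdot w:w\in A^{*}\}\subseteq Q^{n}
\]
under the transition action of the power $\mathrsfs{A}^{n}$. Since $u$ acts as the identity automorphism of $A^{*}$, every ``state'' $u\cdot w$ also does, so $Y$ is a finite $A$-invariant set whose elements all represent $\id\in\mathcal{G}(\mathrsfs{A})$.

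The central step I would aim at is that the last-letter projection $\pi_{n}:Y\to Q$, $(q_{1},\ldots,q_{n})\mapsto q_{n}$, intertwines the transition action of $A$ on $Y$ with the output-automaton action of $A$ on the image $Q_{n}:=\pi_{n}(Y)$. Unfolding the product structure of $\mathrsfs{A}^{n}$ at $y=(q_{1},\ldots,q_{n})$ with input $a$, the intermediate outputs $c_{0}=a$, $c_{i}=q_{i}\circ c_{i-1}$ determine a new state $(q_{1}\cdot c_{0},\ldots,q_{n}\cdot c_{n-1})$ with total output $c_{n}$. Because $y$ represents $\id$, one has $c_{n}=y\circ a=a$, which forces $c_{n-1}$ to be the unique $c\in A$ with $q_{n}\circ c=a$ (this $c$ exists because $\mathrsfs{A}$ is invertible). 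Hence the last letter of $a\cdot y$ is $q_{n}\cdot c$, i.e.\ the endpoint of the unique output-$a$ edge of $\mathrsfs{A}$ issuing from $q_{n}$, a quantity that depends only on $\pi_{n}(y)=q_{n}$. This is the desired equivariance, and it simultaneously defines the prescription $a\triangleright q:=q\cdot c$ as a well-posed action of $A$ on $Q_{n}$.

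Once the equivariance is in hand the rest should drop out quickly. The transducer $\mathrsfs{A}^{n}$ is RI (products of RI transducers are RI), so $A$ acts on $Y$ by permutations; being the image of a permutation action under a surjective equivariant map, $\triangleright$ is then also a permutation action of $A$ on $Q_{n}$, which is exactly the reversibility of $(\mathrsfs{A}|_{Q_{n}})_{\mathcal{O}}$. A short check, using that $b\mapsto(\text{unique }c\text{ with }q\circ c=b)$ is a permutation of $A$, shows $\{q\cdot c:c\in A\}=\{\pi_{n}(b\cdot y):b\in A\}\subseteq Q_{n}$, so $Q_{n}$ is also closed under the input action of $\mathrsfs{A}$, hence a non-empty union of connected components of $\mathrsfs{A}$; input-reversibility of $\mathrsfs{A}|_{Q_{n}}$ is inherited from $\mathrsfs{A}$ being RI. Therefore $\mathrsfs{A}|_{Q_{n}}$ is bireversible and $\mathrsfs{A}$ contains a bireversible connected component, contradicting the hypothesis. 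The main obstacle is spotting the equivariance of $\pi_{n}$; everything else is a formal consequence.
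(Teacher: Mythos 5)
Your argument is correct, and it takes a genuinely different route from the paper. The paper's proof runs through its boundary--dynamics machinery: a positive relation $y$ yields, via Theorem \ref{theo: characterization free}, a finite orbital graph at $y^{\omega}$; by Theorem \ref{thm:schreier} the sizes $\bigl\|((\mathrsfs{A}^{-})^{2^{k}|y|}_{\mathcal{I}},y^{2^{k}})\bigr\|$ stabilize, and the ``swapping invariant'' propositions of \cite{DaRo14} then force the connected component of $y^{2^{k}}$ in the power $\mathrsfs{A}^{2^{k}|y|}$ to be bireversible; finally, since a product of invertible transducers is bireversible only if its factors are, $\mathrsfs{A}$ itself must contain a bireversible component. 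Your route bypasses all of this: from the finite $A$-invariant set $Y$ of identity-representing states of $\mathrsfs{A}^{n}$ you extract, via the last-letter projection and the determinacy of $c_{n-1}$ forced by $c_{n}=a$, a nonempty subset $Q_{n}\subseteq Q$ closed under both input and output transitions on which each letter acts by permutations. The key points all check out: $u\cdot w$ is again a relation by the cocycle identity $u\circ(wv)=(u\circ w)\bigl((u\cdot w)\circ v\bigr)$; the equivariance of $\pi_{n}$ is exactly as you compute it (the product $\mathrsfs{A}^{n}$ feeds the input through $q_{1}$ first, so the last coordinate is the right one to project onto); and the image of a permutation action under a surjective equivariant map onto a finite set is again a permutation action. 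What your approach buys is a self-contained, elementary proof that exhibits the bireversible component directly inside $\mathrsfs{A}$ rather than inside a power, needing nothing from \cite{DaRo14} beyond the product construction; what it gives up is the connection to the finite orbital graph at $y^{\omega}$, which is the intermediate object the paper's framework is built around. One notational caveat: what you write as $a\cdot y$ is, in the paper's conventions, $y\cdot a$ (the paper reserves $a\cdot y$ for the dual action, which reverses the state word); your explicit unfolding makes the intended meaning unambiguous, but you should align the notation.
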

\begin{proof}
Note that by \cite[Proposition 2]{DaRo14} if $\mathrsfs{B}$ and $\mathrsfs{C}$ are invertible, then $\mathrsfs{B}\mathrsfs{C}$ is bireversible if and only if $\mathrsfs{B}$ and $\mathrsfs{C}$ are bireversible. Thus, the condition in the statement ensures that for any $k\ge 1$, $\mathrsfs{A}^{k}$ does not contain any bireversible connected component. Assume, contrary to the claim, that there is a $y\in \mathcal{P}(\mathrsfs{A})$. Let $S=\mathcal{S}(\partial\mathrsfs{A})$, $D=\mathcal{G}(\partial\mathrsfs{A})$. By Theorem \ref{theo: characterization free} $\|\Gamma(S,A,Q^{\omega},y^{\omega})\|<\infty$. Hence, by Theorem \ref{thm:schreier} we get
\begin{align}
\nonumber \left\|\left((\mathrsfs{A}^{-})^{\infty}_{\mathcal{I}},y^{\omega}\right)\right\|&=\left\|\left(\Sch_{D}(\St_{D}(y^{\omega}),\wt{A}),\St_{D}(y^{\omega})\right)\right\|=\|\Gamma(D,\wt{A},Q^{\omega},y^{\omega})\|=\\
\nonumber&= \|\Gamma(S,A,Q^{\omega},y^{\omega})\|<\infty
\end{align}
In particular, the monotonically increasing sequence $\{\|((\mathrsfs{A}^{-})^{2^{k}|y|}_{\mathcal{I}},y^{2^{k}})\|\}_{k\ge 1}$ stabilizes. Hence, by \cite[Proposition 8]{DaRo14} we have that there is an $k\ge 1$ such that $L((\mathrsfs{A}^{-})^{2^{k}|y|}_{\mathcal{O}},y^{2^{k}})= L((\mathrsfs{A}^{-})^{2^{k}|y|}_{\mathcal{I}},y^{2^{k}})$ (with the notation introduced in \cite[Section 5]{DaRo14},  $((\mathrsfs{A}^{-})^{2^{k}|y|}_{\mathcal{I}},y^{m2^{k}})$ has the swapping invariant property with respect to the pair $(y^{2^{k}},y^{2^{k}})$). However, by \cite[Proposition 9]{DaRo14} we get $((\mathrsfs{A}^{-})^{2^{k}|y|}_{\mathcal{O}},y^{2^{k}})\simeq  ((\mathrsfs{A}^{-})^{2^{k}|y|}_{\mathcal{I}},y^{2^{k}})$, whence both $((\mathrsfs{A})^{2^{k}|y|}_{\mathcal{O}},y^{2^{k}})$ and $((\mathrsfs{A})^{2^{k}|y|}_{\mathcal{I}},y^{2^{k}})$ is reversible, i.e $((\mathrsfs{A})^{2^{k}|y|},y^{2^{k}})$ is bireversible, a contradiction.
\end{proof}
The following result extends \cite[Corollary 5]{DaRo14} in the class of $RI$-transducers and it restricts the search for transducers with trivial stabilizers to the class of $RI$-transducers whose duals have no positive relations.
\begin{corollary}\label{cor: no positive dual}
Let $\mathrsfs{A}=(Q,A,\cdot, \circ)$ be an $RI$-transducer such that $G=\mathcal{G}(\mathrsfs{A})$ is infinite. Then the index $\left[G:\St_{G}(y^{\omega})\right]$ is infinite for all $y\in A^{*}$, if and only if $\mathcal{P}(\partial\mathrsfs{A})=\emptyset$. In particular, if $\St_{G}(\ul{u})=\id$ for all $\ul{u}\in A^{\omega}$, then $\mathcal{P}(\partial\mathrsfs{A})=\emptyset$.
\end{corollary}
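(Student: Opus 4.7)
The plan is to apply Theorem \ref{theo: characterization free} to the dual transducer $\partial\mathrsfs{A}$, which is again an $RI$-transducer (hence invertible) by Proposition \ref{prop: dual prop}(ii). Writing $\mathcal{G}(\partial\mathrsfs{A})=F_{A}/N'$ and using that $(\partial(\partial\mathrsfs{A}))^{-}=\mathrsfs{A}^{-}$, the theorem translates the existence of a non-trivial relation $y^{r}\in\wt{A}^{*}$ for $\mathcal{G}(\partial\mathrsfs{A})$ (with $\oo{y}\neq 1$) into finiteness of the orbital graph $\Gamma(\mathcal{S}(\mathrsfs{A}^{-}),Q,\wt{A}^{\omega},y^{\omega})$.

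The bridge to the statement is the identification
\[
\|\Gamma(\mathcal{S}(\mathrsfs{A}^{-}),Q,\wt{A}^{\omega},y^{\omega})\|<\infty \quad\Longleftrightarrow\quad [G:\St_{G}(y^{\omega})]<\infty \qquad (y\in A^{+}).
\]
Two remarks do the work. First, since $\mathrsfs{A}^{-}$ is obtained from $\mathrsfs{A}$ only by adding inverse-labelled edges $p\fr{a^{-1}}{b^{-1}}q$ (Lemma 1), each generator $q\in Q$ acts on positive input letters exactly as in $\mathrsfs{A}$ and produces positive output; hence the orbit of $y^{\omega}\in A^{\omega}$ under $\mathcal{S}(\mathrsfs{A}^{-})$ never leaves $A^{\omega}$ and coincides with the $\mathcal{S}(\mathrsfs{A})$-orbit of $y^{\omega}$. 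Second, if this positive orbit $O$ is finite, every $q\in Q$ restricts to an injective self-map of $O$ (by invertibility of $\mathrsfs{A}$), hence to a permutation of $O$; therefore $q^{-1}|_{O}$ is a positive power of $q|_{O}$, so $O$ is stable under $G=\mathcal{G}(\mathrsfs{A})$ and in fact equals the full $G$-orbit of $y^{\omega}$, whose size is $[G:\St_{G}(y^{\omega})]$.

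With this identification both directions of the equivalence fall out of Theorem \ref{theo: characterization free}. For $(\Leftarrow)$, if $\mathcal{P}(\partial\mathrsfs{A})\neq\emptyset$ choose $y\in\mathcal{P}(\partial\mathrsfs{A})\subseteq A^{+}$; since $\oo{y}=y\neq 1$, direction $i)\Rightarrow ii)$ applied to $\partial\mathrsfs{A}$ yields $\|\Gamma(\mathcal{S}(\mathrsfs{A}^{-}),Q,\wt{A}^{\omega},y^{\omega})\|<\infty$, hence $[G:\St_{G}(y^{\omega})]<\infty$. For $(\Rightarrow)$, if some $[G:\St_{G}(y^{\omega})]<\infty$ with $y\in A^{+}$, the identification gives a finite orbital graph and direction $ii)\Rightarrow i)$ produces a non-trivial relation $y^{r}$ of $\mathcal{G}(\partial\mathrsfs{A})$, which automatically lies in $A^{+}\cap\sigma^{-1}(N')=\mathcal{P}(\partial\mathrsfs{A})$ as $y\in A^{+}$. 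The ``in particular'' clause is immediate: if $\St_{G}(\ul{u})=\id$ for every $\ul{u}\in A^{\omega}$ then $[G:\St_{G}(y^{\omega})]=|G|=\infty$ for every $y\in A^{+}$ (using that $G$ is infinite), and the equivalence forces $\mathcal{P}(\partial\mathrsfs{A})=\emptyset$.

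The main delicate point, and the only step needing real care, is the identification above: one must control escape from $A^{\omega}$ to $\wt{A}^{\omega}$ and then promote the finite positive-generator orbit to the full $G$-orbit. Both facts rest on the positivity of $y$ and the invertibility of $\mathrsfs{A}$, and would fail without either hypothesis.
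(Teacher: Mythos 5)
Your proof is correct and follows essentially the same route as the paper: apply Theorem \ref{theo: characterization free} to the dual $\partial\mathrsfs{A}$ and identify the finiteness of the resulting orbital graph with the finiteness of the index $[G:\St_G(y^\omega)]$. The only difference is that you spell out explicitly the two bridging facts (the $\mathcal{S}(\mathrsfs{A}^-)$-orbit of a positive point stays positive, and a finite positive semigroup orbit is already the full $G$-orbit by invertibility), which the paper compresses into a chain of equalities citing Theorem \ref{thm:schreier}.
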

\begin{proof}
Similarly to the previous proof, let $S=\mathcal{S}(\mathrsfs{A})$. By Theorem \ref{theo: characterization free}, Theorem \ref{thm:schreier} and the fact that $G$ is infinite we get that for all $y\in A^{*}$
$$
\left\| \left(\Sch_{G}(\St_{G}(y^{\omega}),\wt{Q}),\St_{G}(y^{\omega})\right)\right\|=\|\Gamma(G,\wt{Q},A^{\omega},y^{\omega})\|=\|\Gamma(S,Q,A^{\omega},y^{\omega})\|=\infty
$$
if and only if $\left[G:\St_{G}(y^{\omega})\right]$ is infinite, if and only if $y^{r}$ is not relation in $\mathcal{P}(\partial\mathrsfs{A})$ for any $r\ge 1$ .
\end{proof}
The following gap result holds.
\begin{theorem}\label{theo: burnside}
Let $\mathrsfs{A}=(Q,A,\cdot, \circ)$ be an $RI$-transducer such that $G=\mathcal{G}(\mathrsfs{A})$ is infinite. If $\St_{G}(\ul{u})=\id$ for all $\ul{u}\in A^{\omega}$, then $G$ is a Burnside group.
\end{theorem}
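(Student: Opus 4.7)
I would argue by contrapositive: assume $G$ is not Burnside and try to produce a boundary point with a non-trivial stabilizer, contradicting the hypothesis. So pick $w\in\wt{Q}^{*}$ cyclically reduced with $g=\sigma(w)$ of infinite order; then $\oo{w^{r}}\neq 1$ in $G$ for every $r\geq 1$, which by Theorem \ref{theo: characterization free} (the $ii)\Rightarrow i)$ implication, applied to $w$ itself) forces the orbital graph
\[
\Gamma\bigl(S,A,\wt{Q}^{\omega},w^{\omega}\bigr),\qquad S=\mathcal{S}(\partial\mathrsfs{A}^{-}),
\]
to be \emph{infinite}. This is the only algebraic information about $g$ that I expect to use: infinite order transfers to an infinite orbit of $w^{\omega}$ in the boundary of the enriched dual.

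The hypothesis $\St_{G}(\ul{u})=\id$ for every $\ul{u}\in A^{\omega}$ is then fed in through Corollary \ref{cor: no positive dual}, which yields $\mathcal{P}(\partial\mathrsfs{A})=\emptyset$; equivalently, $\mathcal{S}(\partial\mathrsfs{A})$ embeds into the group $H=\mathcal{G}(\partial\mathrsfs{A})$ (well-defined because $\partial\mathrsfs{A}$ is again $RI$ by Proposition \ref{prop: dual prop}). Together with $G$ being infinite, Corollary \ref{cor: no positive dual} also tells us that every Schreier graph $\Sch(\St_{G}(y^{\omega}),\wt{Q})$, $y\in A^{*}$, is infinite. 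So all the relevant Schreier/orbital graphs (both in $G$ on $A^{*}\sqcup A^{\omega}$ and in $S$ on $\wt{Q}^{*}\sqcup\wt{Q}^{\omega}$) are infinite -- which is compatible with freeness but must be reconciled with the presence of the infinite-order $g$.

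The heart of the proof, and the main obstacle I expect, is to use this twofold infiniteness together with the $RI$ structure to descend inside $g$ and find a periodic point of $\langle g\rangle$ in $A^{\omega}$. Concretely, letting $\mathrm{ord}_{n}(g)$ denote the order of $g$ acting on $A^{n}$, infinite order of $g$ gives $\mathrm{ord}_{n}(g)\to\infty$; for each $n$ we may choose $v_{n}\in A^{n}$ lying on a longest $g$-cycle, so that $g^{\mathrm{ord}_{n}(g)}$ fixes $v_{n}$ but its section $g^{\mathrm{ord}_{n}(g)}|_{v_{n}}\in G$ is non-trivial by self-similarity. The plan is to realize the infinite orbit of $w^{\omega}$ in $\wt{Q}^{\omega}$ (under the $A$-action coming from $\partial\mathrsfs{A}^{-}$) as an infinite chain of such sections, and then to use the absence of positive relations for $\partial\mathrsfs{A}$ (no word of $A^{+}$ acts trivially on $Q^{*}$) to control this chain and prevent it from "escaping": one then extracts, via a König-type argument on the finitely branching tree of sections, a coherent boundary point $\ul{u}=\lim v_{n_{j}}\in A^{\omega}$ together with a \emph{single} $k\geq 1$ such that $g^{k}\cdot\ul{u}=\ul{u}$ and $g^{k}\neq\id$, giving $g^{k}\in\St_{G}(\ul{u})\setminus\{\id\}$ and the sought contradiction.

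The technical obstacle is precisely this last coherence step: a priori the stabilizing exponent $\mathrm{ord}_{n}(g)$ grows with $n$, so without extra input one only gets fixed vertices for larger and larger powers of $g$, not a uniform fixed infinite word. I expect the reversibility of $\mathrsfs{A}$ (which makes the $g$-cycle structure on each level uniform, so that the section at any vertex of a long cycle inherits infinite order) together with $\mathcal{P}(\partial\mathrsfs{A})=\emptyset$ (which prevents the tower of sections from collapsing into a positive torsion element of $H$) to combine in exactly the way needed to pin the stabilizing exponent $k$ down to a common value along some branch of the tree, completing the contradiction.
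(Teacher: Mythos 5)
Your reduction of the theorem to exhibiting a non-trivial element in some boundary stabilizer is the right target, but the proposal stops exactly where the proof has to happen. You concede that the level-by-level construction only produces vertices $v_{n}$ fixed by larger and larger powers $g^{\mathrm{ord}_{n}(g)}$, and you ``expect'' reversibility together with $\mathcal{P}(\partial\mathrsfs{A})=\emptyset$ to pin the exponent down to a common value along some branch; no mechanism for this is given, and this coherence step is precisely the content of the theorem. Moreover, the machinery you import up front is never actually used to close the gap: the infiniteness of $\Gamma(S,A,\wt{Q}^{\omega},w^{\omega})$ extracted from Theorem \ref{theo: characterization free} and the vanishing of $\mathcal{P}(\partial\mathrsfs{A})$ from Corollary \ref{cor: no positive dual} both sit idle in the final paragraph. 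As written, this is a plan with the decisive step missing, not a proof.

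The idea you are missing is that one should not hunt for a fixed point level by level, but fix a \emph{periodic} boundary point once and for all and read the exponent off from it. The paper's proof is direct and short: take any $g\in G$ and any $u\in A^{*}$. Since $\mathrsfs{A}$ is invertible one finds $k$ with $g^{s}\cdot u^{k}=g^{s}$ for all $s$, and since the action of $g^{n}$ permutes the finite set $A^{k|u|}$ one finds $n$ with $g^{n}\circ u^{k}=u^{k}$. The two facts combine coinductively,
\[
g^{n}\circ (u^{k})^{\omega}=\bigl(g^{n}\circ u^{k}\bigr)\bigl((g^{n}\cdot u^{k})\circ (u^{k})^{\omega}\bigr)=u^{k}\bigl(g^{n}\circ (u^{k})^{\omega}\bigr),
\]
so $g^{n}\circ u^{\omega}=u^{\omega}$, and the hypothesis $\St_{G}(u^{\omega})=\id$ forces $g^{n}=\id$. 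The point is that $(u^{k})^{\omega}=u^{\omega}$ is the \emph{same} boundary point for every $k$, so the growing exponents that block your argument never arise: the stabilizing power $n$ is computed from a single finite orbit. In particular no contrapositive, no appeal to the freeness characterization, and no K\"onig-type extraction is needed; if you want to salvage your outline, the repair is to replace the sequence $v_{n}\in A^{n}$ by the single point $u^{\omega}$ and run the paper's two-step computation.
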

\begin{proof}
We have to prove that for all $g\in G$ there exists $n:=n(g)>0$ such that $g^n=\id$. Let $u\in A^{\ast}$. Since $\mathrsfs{A}$ is invertible there exists $k$ such that $g\cdot u^k=g$. This implies that $g^s\cdot u^k=g^s$ for all $s$. The automaton $\mathrsfs{A}$ is reversible and so there exists $n$ with the property $g^n\circ u^k=u^k$. These two properties together assure that
$$
g^n\circ (u^k)^{\omega}=g^n\circ u^{k\omega}= u^{k\omega}.
$$
So $g^n$ belongs to the stabilizer of $ u^{k\omega}$. By hypothesis $\St_G(u^{k\omega})=\id$ and this implies $g^n=\id$.
\end{proof}
The following corollary is an immediate consequence of Theorem \ref{theo: burnside} and Corollary \ref{cor: no positive dual}.
\begin{corollary}\label{cor: no trivial stabilizers RI not bireversible}
Let $\mathrsfs{A}$ be an $RI$-transducer such that $G=\mathcal{G}(\mathrsfs{A})$ is infinite. Then, just one among $\mathcal{G}(\mathrsfs{A})$ and $\mathcal{G}(\partial\mathrsfs{A})$ can have all trivial stabilizers in the boundary.
\end{corollary}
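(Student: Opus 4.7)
The plan is a short proof by contradiction that chains the two preceding results. Suppose, towards a contradiction, that both $\mathcal{G}(\mathrsfs{A})$ and $\mathcal{G}(\partial\mathrsfs{A})$ have all trivial stabilizers on the boundary, with $G=\mathcal{G}(\mathrsfs{A})$ infinite by hypothesis. By Proposition \ref{prop: dual prop}, since $\mathrsfs{A}$ is $RI$, its dual $\partial\mathrsfs{A}$ is also $RI$, so Corollary \ref{cor: no positive dual} and Theorem \ref{theo: burnside} apply equally to $\mathrsfs{A}$ and to $\partial\mathrsfs{A}$.

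The first step is to apply Corollary \ref{cor: no positive dual} to $\mathrsfs{A}$ itself: the assumption that $\St_{G}(\ul{u})=\id$ for all $\ul{u}\in A^{\omega}$ forces $\mathcal{P}(\partial\mathrsfs{A})=\emptyset$. As observed immediately after the definition of $\mathcal{P}$, the absence of positive relations makes $\mathcal{S}(\partial\mathrsfs{A})$ torsion-free, and in particular infinite; hence $\mathcal{G}(\partial\mathrsfs{A})$, which contains $\mathcal{S}(\partial\mathrsfs{A})$ as a sub-semigroup, is infinite as well.

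The second step is to apply Theorem \ref{theo: burnside} to $\partial\mathrsfs{A}$: $\partial\mathrsfs{A}$ is $RI$, the generated group $\mathcal{G}(\partial\mathrsfs{A})$ is infinite, and by assumption all of its boundary stabilizers are trivial. The conclusion is that $\mathcal{G}(\partial\mathrsfs{A})$ is a Burnside group, so every generator $q\in Q$ satisfies $q^{n(q)}=\id$ in $\mathcal{G}(\partial\mathrsfs{A})$ for some positive integer $n(q)$. But then $q^{n(q)}$ is a nonempty positive word representing the identity, i.e.\ an element of $\mathcal{P}(\partial\mathrsfs{A})$, contradicting $\mathcal{P}(\partial\mathrsfs{A})=\emptyset$ obtained in the first step.

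There is no genuine obstacle in this argument; it is essentially a diagram chase through the previous results. The only point that requires a little care is ensuring that Theorem \ref{theo: burnside} can legitimately be invoked for $\partial\mathrsfs{A}$, i.e.\ that $\mathcal{G}(\partial\mathrsfs{A})$ is known to be infinite before we ask it to be Burnside: this is precisely what the torsion-freeness remark following the definition of $\mathcal{P}$ provides, and it is what makes the contradiction between Burnside property and $\mathcal{P}(\partial\mathrsfs{A})=\emptyset$ possible.
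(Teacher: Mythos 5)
Your argument is correct and is exactly the route the paper intends: the paper presents this corollary as an immediate consequence of Theorem \ref{theo: burnside} and Corollary \ref{cor: no positive dual}, and your chain (trivial stabilizers for $\mathcal{G}(\mathrsfs{A})$ $\Rightarrow$ $\mathcal{P}(\partial\mathrsfs{A})=\emptyset$ $\Rightarrow$ $\mathcal{S}(\partial\mathrsfs{A})$ torsion-free, so $\mathcal{G}(\partial\mathrsfs{A})$ infinite; then Theorem \ref{theo: burnside} applied to the $RI$-transducer $\partial\mathrsfs{A}$ makes $\mathcal{G}(\partial\mathrsfs{A})$ Burnside, yielding a positive relation and a contradiction) is the natural filling-in of that claim. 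The only cosmetic slip is that the generators of $\mathcal{G}(\partial\mathrsfs{A})$ are the letters $a\in A$ (the states of $\partial\mathrsfs{A}$), not elements $q\in Q$, so the contradicting positive relation should read $a^{n(a)}=\id$.
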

Note that by Theorem \ref{theo: burnside} and the non-existence of infinite Burnside groups on reversible $3$-states \cite{KiPiSa14} there are no $3$-state $RI$-transducers defining a group with all trivial stabilizers. In the bireversible case we obtain this stronger version of Corollary \ref{cor: no positive dual}. In this direction we have the following result.
\begin{corollary}
Let $\mathrsfs{A}$ be an $RI$-transducer that does not contain a connected component which is not bireversible. Then, $\mathcal{G}(\mathrsfs{A})$ can not have all trivial stabilizers in the border.
\end{corollary}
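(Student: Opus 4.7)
The hypothesis---that $\mathrsfs{A}$ is an $RI$-transducer every connected component of which is bireversible---is equivalent to $\mathrsfs{A}$ being bireversible, and by Proposition \ref{prop: dual prop}(iii) the dual $\partial\mathrsfs{A}$ is bireversible too; the hypothesis is thus stable under $\partial$. I argue by contradiction, assuming that $G=\mathcal{G}(\mathrsfs{A})$ is infinite (the only non-vacuous case, as in Theorem \ref{theo: burnside} and Corollary \ref{cor: no positive dual}) and has all trivial stabilizers $\St_{G}(\underline{u})=\id$ for every $\underline{u}\in A^\omega$.

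By Theorem \ref{theo: burnside}, $G$ is a Burnside group; in particular each generator $q\in Q$ has finite order in $G$, so $q^{n_q}\in Q^+\cap\sigma^{-1}(N)=\mathcal{P}(\mathrsfs{A})$ and therefore $\mathcal{P}(\mathrsfs{A})\neq\emptyset$. Applying Corollary \ref{cor: no positive dual} to $\mathrsfs{A}$ yields simultaneously $\mathcal{P}(\partial\mathrsfs{A})=\emptyset$, so $\mathcal{S}(\partial\mathrsfs{A})$ is torsion-free and (since $|A|\ge 2$) infinite.

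The key manoeuvre is to feed $\mathcal{P}(\mathrsfs{A})\neq\emptyset$ into the iff-part of Corollary \ref{cor: no positive dual}, this time applied to $\partial\mathrsfs{A}$ (using $\partial(\partial\mathrsfs{A})=\mathrsfs{A}$): there must exist $y\in Q^+$ for which the index $[\mathcal{G}(\partial\mathrsfs{A}):\St_{\mathcal{G}(\partial\mathrsfs{A})}(y^\omega)]$ is finite. Because $y\in Q^+$, the periodic point $y^\omega\in Q^\omega\subseteq\wt{Q}^\omega$ is essentially non-trivial by Proposition \ref{prop: charact ess trivial} (its freely reduced prefixes have lengths going to infinity). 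Since $\partial\mathrsfs{A}$ is bireversible, Theorem \ref{theo: finiteness schreier} applied with $\partial\mathrsfs{A}$ in place of $\mathrsfs{A}$ then produces a non-trivial relation $\tilde{y}^{\tilde{r}}\in\wt{A}^*$ of $\mathcal{G}(\partial\mathrsfs{A})$.

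The main obstacle, and the step in which the bireversibility of $\partial\mathrsfs{A}$ must be used decisively, is to promote this non-trivial relation $\tilde{y}^{\tilde{r}}$ to a positive one in $\mathcal{P}(\partial\mathrsfs{A})$, contradicting $\mathcal{P}(\partial\mathrsfs{A})=\emptyset$ (equivalently, contradicting the torsion-freeness of $\mathcal{S}(\partial\mathrsfs{A})$). I expect this to require combining Theorem \ref{theo: enriched is equal}---which in the bireversible case aligns the $A$- and $\wt{A}$-sides via $\mathcal{G}((\partial\mathrsfs{A})^-)\simeq\mathcal{G}(\partial\mathrsfs{A})$---with the stabilization formulation (iii) of Theorem \ref{theo: finiteness schreier} and the invariant-subset characterization of relations of Theorem \ref{theo: charact relations}, exploiting the positivity $y\in Q^+$ to track a positive descendant of $\tilde{y}^{\tilde{r}}$ on the $\partial\mathrsfs{A}$ side.
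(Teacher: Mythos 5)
Your proof does not establish the statement, and the root of the trouble is that the statement as printed contains a double negation that the paper's own proof reveals to be a typo. You read the hypothesis literally as ``every connected component of $\mathrsfs{A}$ is bireversible'', i.e.\ $\mathrsfs{A}$ is bireversible; but the paper's proof opens with ``Assume, contrary to our claim, that $\mathrsfs{A}$ \emph{contains} a connected component $\mathrsfs{R}$ \ldots\ which is \emph{not} bireversible'', and the remark immediately following says the corollary ``rules out the $RI$-transducers which are not bireversible'' as candidates for trivial boundary stabilizers. So the intended hypothesis is that $\mathrsfs{A}$ \emph{does} contain a non-bireversible component. Under your literal reading the claim is in fact false: the one-state transducer over $\{0,1\}$ with $q\circ 0=1$, $q\circ 1=0$ is bireversible and generates $\mathbb{Z}/2\mathbb{Z}$ acting on $\{0,1\}^{\omega}$ by global complementation, which has all trivial boundary stabilizers. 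More tellingly, what you set out to prove --- that no bireversible transducer defines a group with all trivial stabilizers --- is exactly what the paper leaves open (the introduction states such transducers ``may lie \ldots\ in the class of bireversible transducers'', in which case the group is merely forced to be Burnside by Theorem \ref{theo: burnside}; see also the last open problem). No argument assembled from the paper's results can close that.

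Even within your reading, the argument has a concrete gap precisely where you flag the ``main obstacle'': the promotion of a relation of $\mathcal{G}(\partial\mathrsfs{A})$ to a positive one in $\mathcal{P}(\partial\mathrsfs{A})$ is never carried out, and it is not a routine combination of Theorems \ref{theo: enriched is equal}, \ref{theo: finiteness schreier} and \ref{theo: charact relations}. Moreover, the preceding step misapplies Theorem \ref{theo: finiteness schreier}: the finite Schreier graph you extract from Corollary \ref{cor: no positive dual} sits at $y^{\omega}\in Q^{\omega}$ under the action of $\mathcal{G}(\partial\mathrsfs{A})$, which is hypothesis $v)$ of that theorem for the \emph{base} transducer $\mathrsfs{A}$ (whose $\Sch$ refers to $\mathcal{G}((\partial\mathrsfs{A})^{-})$ acting on $\wt{Q}^{\omega}$), not for $\partial\mathrsfs{A}$; fed in correctly it only returns that $\mathcal{G}(\mathrsfs{A})$ is not free, which you already know since $G$ is an infinite Burnside group. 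To conclude anything about relations of $\mathcal{G}(\partial\mathrsfs{A})$ you would need a finite Schreier graph of $\mathcal{G}(\mathrsfs{A}^{-})$ at an essentially non-trivial point of $\wt{A}^{\omega}$, which your standing hypotheses exclude on $A^{\omega}$. The paper's actual proof is three lines and localizes your steps to the right object: pick the non-bireversible connected component $\mathrsfs{R}$; Corollary \ref{cor: not containing bireversible} gives $\mathcal{P}(\mathrsfs{R})=\emptyset$, so $R=\mathcal{G}(\mathrsfs{R})$ has no positive relations and is infinite; but trivial stabilizers pass to the subgroup $R$, so Theorem \ref{theo: burnside} makes $R$ Burnside, whence some $q^{n}$ with $q\in Q$ is a positive relation --- a contradiction. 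Your use of Theorem \ref{theo: burnside} and of ``Burnside $\Rightarrow\mathcal{P}\neq\emptyset$'' is the right idea, but it must be played off against $\mathcal{P}(\mathrsfs{R})=\emptyset$ on the non-bireversible component, where that vanishing is actually available, rather than routed through the dual.
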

\begin{proof}
Assume, contrary to our claim, that $\mathrsfs{A}$ contains a connected component $\mathrsfs{R}$ that is an $RI$-transducer which is not bireversible such that $\mathcal{G}(\mathrsfs{A})$ has all trivial stabilizers in the boundary. Note that the same holds in particular for the subgroup $R=\mathcal{G}(\mathrsfs{R})$. Hence, by Corollary \ref{cor: not containing bireversible} $R$ is infinite because $\mathcal{P}(\mathrsfs{R})=\emptyset$. However, by Theorem \ref{theo: burnside} $R$ is a Burnside group, a contradiction.
\end{proof}
The previous corollary rules out the $RI$-transducers which are not bireversible as possible candidates for non-virtually nilpotent groups having all trivial stabilizers in the boundary. The following corollary is the best that we can obtain for the bireversible case.
\begin{corollary}
Let  $\mathrsfs{A}=(Q,A,\cdot, \circ)$ be a bireversible transducer such that $G=\mathcal{G}(\mathrsfs{A})$ is infinite. If $\St_{G}(\ul{u})=\id$ for all the essentially non-trivial points $\ul{u}\in \wt{A}^{\omega}$, then $\mathcal{G}(\partial\mathrsfs{A})$ is free of rank $|A|$.
\end{corollary}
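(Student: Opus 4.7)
The plan is to apply Theorem \ref{theo: finiteness schreier} to the dual transducer $\partial\mathrsfs{A}$ and combine its equivalence $(i)\Leftrightarrow(vi)$ with the hypothesis on stabilizers. By Proposition \ref{prop: dual prop}(iii), $\partial\mathrsfs{A}$ is bireversible because $\mathrsfs{A}$ is, so the theorem applies. The aim will be to prove that no essentially non-trivial $\ul v\in\wt{A}^{\omega}$ can have a finite Schreier graph; by $(i)\Leftrightarrow(vi)$ applied to $\partial\mathrsfs{A}$ this is equivalent to $\mathcal{G}(\partial\mathrsfs{A})=F_{A}/N'$ having $N'=1$, i.e.\ being free of rank $|A|$.

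A short bookkeeping step is needed to interpret the Schreier graphs appearing after substituting $\partial\mathrsfs{A}$ for $\mathrsfs{A}$ in the statement of that theorem: they refer to the group $\mathcal{G}((\partial(\partial\mathrsfs{A}))^{-})$. Since $\partial$ is an involution on transducers, $\partial(\partial\mathrsfs{A})=\mathrsfs{A}$, so this group is $\mathcal{G}(\mathrsfs{A}^{-})$, which by Theorem \ref{theo: enriched is equal} is canonically isomorphic to $G=\mathcal{G}(\mathrsfs{A})$ acting on $\wt{A}^{\omega}$ through the enriched transducer $\mathrsfs{A}^{-}$. These are exactly the stabilizers that the hypothesis concerns.

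With this identification in hand, I would argue by contradiction. Assuming $\mathcal{G}(\partial\mathrsfs{A})$ is not free of rank $|A|$, Theorem \ref{theo: finiteness schreier} applied to $\partial\mathrsfs{A}$ produces an essentially non-trivial $\ul v\in\wt{A}^{\omega}$ with $\|\Sch(\ul v)\|<\infty$. Since $\Sch(\ul v)$ is the coset Schreier graph of $\St_{G}(\ul v)$ in $G$, one has $[G:\St_{G}(\ul v)]=\|\Sch(\ul v)\|<\infty$. The hypothesis then forces $\St_{G}(\ul v)=\id$ because $\ul v$ is essentially non-trivial, whence $|G|=[G:\id]<\infty$, contradicting the assumption that $G$ is infinite. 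The only non-routine point of the argument is the bookkeeping about $\partial$ and the enriched dual in the previous paragraph; once the acting group has been identified with $G$ via Theorem \ref{theo: enriched is equal}, the rest is a direct application of the Schreier correspondence.
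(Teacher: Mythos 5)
Your proposal is correct and follows essentially the same route as the paper: both arguments rest on Theorem \ref{theo: finiteness schreier} applied to $\partial\mathrsfs{A}$ together with the identification $\mathcal{G}(\mathrsfs{A}^{-})\simeq G$ from Theorem \ref{theo: enriched is equal}, the only difference being that you phrase the final step as a contradiction (finite Schreier graph $\Rightarrow$ finite index of the trivial stabilizer $\Rightarrow$ $G$ finite) while the paper states it directly (trivial stabilizers and $G$ infinite $\Rightarrow$ all Schreier graphs at essentially non-trivial points are infinite). Your explicit bookkeeping of $\partial(\partial\mathrsfs{A})=\mathrsfs{A}$ and of which group acts is a welcome clarification of a point the paper leaves implicit.
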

\begin{proof}
By Theorem \ref{theo: enriched is equal} $\mathcal{G}(\mathrsfs{A})\simeq \mathcal{G}(\mathrsfs{A}^{-})$ is acting on $\wt{A}^{\omega}$. Since $G$ is infinite and $\St_{G}(\ul{u})$ is trivial for all the essentially non-trivial points, then all the Schreier automata
$$
\left(\Sch_{G}(\St_{G}(\ul{u}),\wt{Q}),\St_{G}(\ul{u})\right)
$$
are infinite for all the essentially non-trivial elements $\ul{u}\in \wt{A}^{\omega}$. Hence, by Theorem \ref{theo: finiteness schreier} $\mathcal{G}(\partial\mathrsfs{A})$ is free of rank $|A|$.
\end{proof}

\subsection{Some algorithmic consequences}
Very few is known from the algorithmic point of view of automata (semi)groups. Besides the word problem which is decidable, it seems that many problems for automata (semi)groups may be undecidable. In this direction it is worth mentioning here the result obtained by Gillibert on the undecidibility for the finiteness in semigroups automata \cite{Gilbert13}, the undecidability of the conjugacy problem \cite{VeSu11}, and the decidability of this problem in the contracting case \cite{BoBoSi13}.

In particular, checking the freeness of an automata group is still open. More formally, it is not known whether or not the following problem:
\\
\texttt{not-FREENESS}:
\begin{itemize}
\item Input: An invertible transducer $\mathrsfs{A}$.
\item output: Is $\mathcal{G}(\mathrsfs{A})$ not free?
\end{itemize}
is decidable, not even if we restrict it to the class of bireversible invertible transducers (\texttt{BIR-not-FREENESS}).
In this section we apply the results of the previous sections to obtain some equivalences from the algorithmic point of view. In particular, we reveal a strict connection between \texttt{FREENESS} (\texttt{BIR-FREENESS}) and checking the finiteness of the orbital graphs (Schreier graphs) in the border. For this reason we state the following two algorithmic problems regarding the dynamics of automata (semi)group: the first one is in the class of inverse transducer, and the second in the class of bireversible transducers.
\\
\texttt{FINITE-PERIODIC-ORBITAL}:
\begin{itemize}
\item Input: An inverse transducer $\mathrsfs{A}=(Q,\wt{A},\cdot,\circ)$.
\item output: Is there a finite orbital graph in the boundary centered on a periodic point $y^{\omega}$ with $\oo{y}\neq 1$?
\end{itemize}
By Theorem \ref{theo: finiteness essentially trivial}, in the class of bireversible inverse transducers, Schreier graphs of essentially trivial elements are finite. Hence, it makes sense to just look for finite Schreier graphs of essentially non-trivial elements.
\\
\\
\texttt{BIR-FINITENESS}:
\begin{itemize}
\item Input: A bireversible inverse transducer $\mathrsfs{A}=(Q,\wt{A},\cdot,\circ)$.
\item output: Is there a finite orbital graph in the boundary centered on an essentially non-trivial element?
\end{itemize}
The following results are consequences of Theorem \ref{theo: finiteness schreier} and Theorem \ref{theo: characterization free} and the fact that passing from a transducer to its dual is an $\mathcal{O}(\log n)$-space bounded reduction (see for instance \cite{papa}).
\begin{theorem}
\texttt{not-FREENESS} is decidable if and only if \texttt{FINITE-PERIODIC-ORBITAL} is decidable. Furthermore, if they are decidable, then they both belong to the same computational class.
\end{theorem}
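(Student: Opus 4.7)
The plan is to establish mutual log-space many-one reductions between \texttt{not-FREENESS} and \texttt{FINITE-PERIODIC-ORBITAL}; since such reductions preserve decidability as well as membership in the usual computational classes (L, NL, P, PSPACE, etc.), this gives both halves of the theorem at once. The backbone of each direction is Theorem \ref{theo: characterization free}, which already reformulates non-freeness as the existence of a finite orbital graph at a reduced periodic boundary point for the semigroup generated by the enriched dual.

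For \texttt{not-FREENESS} reducing to \texttt{FINITE-PERIODIC-ORBITAL}, given an invertible transducer $\mathrsfs{A}$ I would simply output its enriched dual $(\partial\mathrsfs{A})^{-}$. Both the dual operation (transposing the roles of states and alphabet letters on each edge) and the enrichment (adjoining, for every edge $p\mapright{a|b}q$, the formal inverse edge $q\vlongfr{a^{-1}}{b^{-1}}p$) are computable by a log-space transducer, as already noted in the paper via \cite{papa}. The correctness is immediate from Theorem \ref{theo: characterization free}: $\mathcal{G}(\mathrsfs{A})$ is not free iff some periodic $y^{\omega}\in\wt{Q}^{\omega}$ with $\oo{y}\neq 1$ has a finite orbital graph under $\mathcal{S}((\partial\mathrsfs{A})^{-})$, which is exactly a positive instance of \texttt{FINITE-PERIODIC-ORBITAL} on input $(\partial\mathrsfs{A})^{-}$.

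The converse reduction \texttt{FINITE-PERIODIC-ORBITAL} to \texttt{not-FREENESS} essentially inverts this construction. Given an inverse transducer $\mathrsfs{B}=(Q,\wt{A},\cdot,\circ)$, I would form the positive part $\mathrsfs{B}^{+}$ by retaining only edges with input letter in $A$, and then set $\mathrsfs{A}:=\partial\mathrsfs{B}^{+}$, a transducer with state set $A$ and alphabet $Q$. A direct check using the identity $\partial\partial=\mathrm{id}$ and the inverse-closure of $\mathrsfs{B}$ yields
\[
(\partial\mathrsfs{A})^{-}=(\partial\partial\mathrsfs{B}^{+})^{-}=(\mathrsfs{B}^{+})^{-}=\mathrsfs{B},
\]
so Theorem \ref{theo: characterization free} applied to $\mathrsfs{A}$ again matches the two decision problems. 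The transformation is log-space.

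The main subtlety I expect is in this second reduction: for an arbitrary inverse transducer $\mathrsfs{B}$, the positive part $\mathrsfs{B}^{+}$ need not be reversible, and then $\mathrsfs{A}=\partial\mathrsfs{B}^{+}$ fails to be invertible by Proposition \ref{prop: dual prop}, taking it outside the scope of Theorem \ref{theo: characterization free}. The fix is to recognize that the inverse transducers relevant to the characterization are precisely those that arise as enriched duals, which is equivalent to reversibility of $\mathrsfs{B}^{+}$ and can be tested in log-space. By pre-processing the input to reject (or trivialize) instances outside this class, the correspondence becomes tight and the two reductions are mutual inverses, completing the proof.
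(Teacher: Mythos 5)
Your forward reduction (\texttt{not-FREENESS} $\le_{\log}$ \texttt{FINITE-PERIODIC-ORBITAL} via $\mathrsfs{A}\mapsto(\partial\mathrsfs{A})^{-}$, with correctness from Theorem \ref{theo: characterization free}) is exactly what the paper intends: the authors give no further argument beyond citing Theorem \ref{theo: characterization free} and the $\mathcal{O}(\log n)$-space computability of the dual, and this direction is sound. The problem is your converse reduction. You correctly spot that an arbitrary inverse transducer $\mathrsfs{B}$ need not be of the form $(\partial\mathrsfs{A})^{-}$ for any invertible $\mathrsfs{A}$ (the positive part may fail to be reversible, may fail to be complete with positive outputs on positive inputs, etc.), but your fix --- ``pre-processing the input to reject (or trivialize) instances outside this class'' --- is not a many-one reduction. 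A reduction must map every instance of \texttt{FINITE-PERIODIC-ORBITAL} to an instance of \texttt{not-FREENESS} \emph{with the same answer}; an inverse transducer outside the enriched-dual class can perfectly well be a yes-instance of \texttt{FINITE-PERIODIC-ORBITAL} (e.g.\ one whose states all act trivially on some reduced periodic point, so its orbital graph is a single vertex) or a no-instance, and sending all of them to a fixed rejected/trivial instance necessarily gives the wrong answer on one of the two kinds. What your argument actually proves is the equivalence for the \emph{restriction} of \texttt{FINITE-PERIODIC-ORBITAL} to inverse transducers arising as enriched duals of invertible transducers --- which is evidently the reading the paper has in mind, since that restricted class is all that Theorem \ref{theo: characterization free} speaks about, but it is not the problem as literally stated.

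To close the gap you would need one of two things: either (a) state \texttt{FINITE-PERIODIC-ORBITAL} with its input ranging over enriched duals (equivalently, over inverse transducers whose positive part is a reversible transducer with positive outputs, a log-space checkable condition), in which case your two maps are mutually inverse and the theorem follows as you argue; or (b) for the problem on arbitrary inverse transducers, exhibit a genuine reduction or algorithm handling the instances that are not enriched duals, which Theorem \ref{theo: characterization free} gives you no tool for. Since the paper offers no such tool and simply asserts the theorem as a consequence of Theorem \ref{theo: characterization free} plus the log-space dual construction, option (a) is the faithful repair; as written, your last paragraph claims more than it delivers.
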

\begin{theorem}
\texttt{BIR-not-FREENESS} is decidable if and only if \texttt{BIR-FINITENESS} is decidable. Furthermore, if they are decidable, then they both belong to the same computational class.
\end{theorem}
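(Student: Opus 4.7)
The plan is to exhibit $\mathcal{O}(\log n)$-space reductions between the two problems in both directions. This suffices for the equivalence of decidability and, by standard closure properties of complexity classes under log-space reductions, for the fact that the two problems belong to the same computational class. The heart of both reductions is the equivalence $i)\Leftrightarrow vi)$ of Theorem \ref{theo: finiteness schreier}, which translates the algebraic property of non-freeness of $\mathcal{G}(\mathrsfs{A})$ into a dynamical statement about finite Schreier graphs at essentially non-trivial boundary points for the enriched dual.

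For the forward reduction, given a bireversible transducer $\mathrsfs{A}=(Q,A,\cdot,\circ)$, I would compute its enriched dual $(\partial \mathrsfs{A})^{-}$; by Proposition \ref{prop: dual prop} the transducer $\partial\mathrsfs{A}$ is bireversible and, since bireversibility is preserved under adding inverse edges, $(\partial \mathrsfs{A})^{-}$ is a bireversible inverse transducer on the alphabet $\wt{Q}$. Theorem \ref{theo: finiteness schreier} then tells us that $\mathcal{G}(\mathrsfs{A})$ is non-free precisely when $(\partial \mathrsfs{A})^{-}$ admits a finite orbital/Schreier graph in the boundary centered at some essentially non-trivial point, which is exactly a \texttt{YES}-instance of \texttt{BIR-FINITENESS}. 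The assignment $\mathrsfs{A}\mapsto (\partial \mathrsfs{A})^{-}$ is clearly log-space computable, since it amounts to swapping states and alphabet (dualization) and then doubling the edge set by reversing labels (enrichment).

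For the backward reduction, given a bireversible inverse transducer $\mathrsfs{B}=(P,\wt{B},\cdot,\circ)$, I would first extract its positive part $\mathrsfs{B}^{+}=(P,B,\cdot,\circ)$ by deleting all edges whose input label lies in $B^{-1}$. The key observation here is that, for an inverse transducer, the edges labeled by $b^{-1}$ are uniquely determined by applying the involution to the $b$-labeled edges; therefore $\mathrsfs{B}^{+}$ is a bireversible transducer and its canonical enrichment satisfies $(\mathrsfs{B}^{+})^{-}=\mathrsfs{B}$. Setting $\mathrsfs{A}=\partial \mathrsfs{B}^{+}$, another application of Proposition \ref{prop: dual prop} gives that $\mathrsfs{A}$ is bireversible and $(\partial \mathrsfs{A})^{-}=\mathrsfs{B}$. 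By Theorem \ref{theo: finiteness schreier} once more, a \texttt{YES}-instance of \texttt{BIR-FINITENESS} for $\mathrsfs{B}$ corresponds to a \texttt{YES}-instance of \texttt{BIR-not-FREENESS} for $\mathrsfs{A}$, and the construction $\mathrsfs{B}\mapsto \mathrsfs{A}$ is again log-space computable.

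The only subtle point that I expect to require care is the verification that extracting the positive part $\mathrsfs{B}^{+}$ of a bireversible inverse transducer actually preserves completeness, determinism and bireversibility of both input and output maps, and that $(\mathrsfs{B}^{+})^{-}=\mathrsfs{B}$ holds on the nose. Both facts, however, are direct consequences of the rigid structure of inverse transducers, so the main conceptual work is already packaged in Theorem \ref{theo: finiteness schreier}; the theorem then follows by combining the two log-space reductions above.
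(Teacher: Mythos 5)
Your proposal is correct and follows exactly the route the paper takes: the paper's entire proof is the one-line observation that the result is a consequence of Theorem \ref{theo: finiteness schreier} (the equivalence of non-freeness with the existence of a finite Schreier graph at an essentially non-trivial boundary point) together with the fact that passing between a transducer and its (enriched) dual is an $\mathcal{O}(\log n)$-space reduction. Your write-up simply fills in the two reductions explicitly; the only caveat worth noting is that your backward reduction implicitly assumes the input inverse transducer pairs positive input labels with positive output labels so that $(\mathrsfs{B}^{+})^{-}=\mathrsfs{B}$, an assumption the paper itself leaves tacit in its formulation of \texttt{BIR-FINITENESS}.
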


\section{Transducers with sink-state}\label{sec: transducers with sink}
We address the question whether or not a free group of rank grater that one can be generated by a transducer containing a sink-state. This problem is strictly related to the existence of certain words which we call \emph{fragile}. We consider the rather broad class $\mathcal{S}_{a}$ of invertible transducers $\mathrsfs{A}=(Q,A,\cdot,\circ)$ for which the sink-state $e\in Q$ is accessible from every state, we also make the extra assumption that the sink acts like the identity. We recall that a sink state $e$ is such that $e\cdot v=e$ and $e\circ v=v$, for any $v\in A^{\ast}$. Note that these conditions are equivalent to have transitions $a\longfr{e}{e}a$ for any $a\in A$ in $\partial\mathrsfs{A}$. Let $g\in\mathcal{G}(\mathrsfs{A})$. We recall that an element $\underline{w}\in A^{\omega}$ is $g$-regular, if there exists a prefix $\underline{w}[n]$ of $\underline{w}$ such that $g\cdot \underline{w}[n]=e$ (see, for instance \cite{Nekra10}). If for every $n\geq 1$, $g\cdot \underline{w}[n]\neq e$ then $\underline{w}$ is said to be $g$-singular. Given an automaton $\mathrsfs{A}=(Q,A,\cdot,\circ)$, we can define its \textit{reduction automaton} $\mathcal{R}(\mathrsfs{A})=(\mathcal{R}(Q),A,\widehat{\cdot},\widehat{\circ})$ as the smallest automaton where we identify with the sink state all the maximal connected components $Q_1,\ldots, Q_t\subseteq Q$ such that for every $v\in A^{\ast}$ and $q\in Q_i$, $q\cdot v\in Q_i $ and $q\circ v=v$. Therefore, this implies $q=\id$ in $\mathcal{G}(\mathrsfs{A})$. Denote by $\overline{Q}$ the set $Q_1\cup\ldots\cup Q_t$. If $\overline{Q}\neq \emptyset$ we put $\mathcal{R}(Q)=(Q\setminus \overline{Q})\sqcup \{e\}$, where $e$ is a sink state, and for any $p\in Q\setminus \overline{Q}$ such that if $p\cdot u=q\in \overline{Q}$ we define $p \ \widehat{\cdot} \ u=e$ and $p \ \widehat{\circ} \ u=p\circ u$, and if $p\cdot u=q\notin \overline{Q}$ then $p \ \widehat{\cdot} \ u=p\cdot u$ and $p \ \widehat{\circ} \ u=p\circ u$. Otherwise, if $\overline{Q}= \emptyset$ then $\mathcal{R}(Q)=Q$ and $\mathcal{R}(\mathrsfs{A})=\mathrsfs{A}$. Notice that if $\mathcal{R}(\mathrsfs{A})$ contains a sink-state, this sink is unique and coincides with $e$. Moreover, it is straightforward to check that $\mathcal{G}(\mathrsfs{A})\simeq \mathcal{G}(\mathcal{R}(\mathrsfs{A}))$ holds.

With the notion of $g$-regular ($g$-singular) elements the class $\mathcal{S}_{a}$ has the following property

\begin{proposition}
If $\mathrsfs{A}\in \mathcal{S}_{a}$ with $\mathrsfs{A}=(Q,A,\cdot,\circ)$, then every element $g\in\mathcal{G}(\mathrsfs{A})$ has a $g$-regular element in $A^{\omega}$. Vice versa if the action of every element $g\in\mathcal{G}(\mathrsfs{A})$ has a $g$-regular element in $A^{\omega}$ then $\mathcal{R}(\mathrsfs{A})\in \mathcal{S}_{a}$.
\end{proposition}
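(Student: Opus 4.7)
My plan is to handle the two implications separately, both built around viewing $g$ as a state of a suitable power of $\mathrsfs{A}\sqcup\mathrsfs{A}^{-1}$ and exploiting the fact that, once a coordinate lands in the sink $e$, it is frozen there by any further input.

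For the forward direction, fix $g\in\mathcal{G}(\mathrsfs{A})$ and write $g=g_{1}g_{2}\cdots g_{m}$ with $g_{i}\in Q\cup Q^{-1}$; regard $g$ as the state $(g_{1},\ldots,g_{m})$ of $(\mathrsfs{A}\sqcup\mathrsfs{A}^{-1})^{m}$. It suffices to produce $u\in A^{*}$ with $g\cdot u=(e,\ldots,e)$, for then the section of $g$ at $u$ is $\id$ in the group and any infinite extension of $u$ is a $g$-regular word. I first observe that $e^{-1}$ is accessible in $\mathrsfs{A}^{-1}$ from any $q^{-1}$: by sink-accessibility in $\mathrsfs{A}$, pick $v\in A^{*}$ with $q\cdot v=e$ and set $a=q\circ v$; invertibility of $\mathrsfs{A}$ then gives $q^{-1}\cdot a=e^{-1}$. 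Hence in $\mathrsfs{A}\sqcup\mathrsfs{A}^{-1}$ every state reaches a sink. Now I kill the coordinates one at a time: choose $u_{1}$ so that $g_{1}\cdot u_{1}\in\{e,e^{-1}\}$; since $e\cdot w=e$ and $e^{-1}\cdot w=e^{-1}$ for every $w\in A^{*}$, this first coordinate is frozen from now on. Then, looking at the residual tuple in $(\mathrsfs{A}\sqcup\mathrsfs{A}^{-1})^{m-1}$, choose $u_{2}$ killing the second coordinate without disturbing the first, and iterate. The concatenation $u=u_{1}u_{2}\cdots u_{m}$ does the job.

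For the reverse direction, I apply the hypothesis to each generator $q\in Q$: there is $u_{q}\in A^{*}$ with $q\cdot u_{q}=\id$ in $\mathcal{G}(\mathrsfs{A})$, so the state $q\cdot u_{q}\in Q$ acts trivially on $A^{*}$. I then claim that the entire forward $\cdot$-orbit of $q\cdot u_{q}$ consists of identity-acting states: the recursive formula $p\circ(av)=(p\circ a)\bigl((p\cdot a)\circ v\bigr)$ shows that if $p$ fixes every word then so does $p\cdot a$, and the claim follows by induction. Consequently $q\cdot u_{q}$ sits inside some maximal connected subset of $Q$ which is closed under $\cdot$ and made up of identity-acting states, that is, inside some $Q_{i}\subseteq\overline{Q}$. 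This simultaneously gives $\overline{Q}\neq\emptyset$ (so $\mathcal{R}(\mathrsfs{A})$ genuinely has a sink $e$) and shows that every $q\in Q$ admits a $\cdot$-path into $\overline{Q}$; after collapsing $\overline{Q}$ to $e$, this is exactly the sink-accessibility defining $\mathcal{S}_{a}$. Invertibility of $\mathcal{R}(\mathrsfs{A})$ is inherited from $\mathrsfs{A}$ (the output functions of non-sink states are unchanged, and $e$ acts as identity), so $\mathcal{R}(\mathrsfs{A})\in\mathcal{S}_{a}$.

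The main subtlety is deciphering ``$g\cdot u=e$'' for $|g|>1$: the correct reading is that the tuple of coordinate-sections of $g$ at $u$ in the product transducer is the all-sink tuple, and the coordinate-by-coordinate killing is what makes this achievable. A secondary point, already addressed above, is the symmetric treatment of negative generators, for which invertibility of $\mathrsfs{A}$ is the key ingredient to transfer sink-accessibility from $\mathrsfs{A}$ to $\mathrsfs{A}^{-1}$.
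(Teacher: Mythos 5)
Your argument is correct and matches the paper's proof in all essentials: the forward direction is the same peeling-off of one coordinate at a time (the paper phrases it as an induction on the length of a word representing $g$, you as an iteration in the product transducer), with the same use of invertibility to transfer sink-accessibility from $\mathrsfs{A}$ to $\mathrsfs{A}^{-1}$. For the converse the paper argues by contrapositive (if $\mathcal{R}(\mathrsfs{A})\notin\mathcal{S}_{a}$, some state admits no regular sequence) while you argue directly via the forward $\cdot$-orbit of a trivially-acting section landing in $\overline{Q}$, but this is only a cosmetic difference.
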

\begin{proof}
The proof is by induction on the length $m$ of an element $q_{1}\ldots q_{m}\in \wt{Q}^{*}$, representing some element $g\in \mathcal{G}(\mathrsfs{A})$ via the canonical map $\wt{Q}^{*}\rightarrow \mathcal{G}(\mathrsfs{A})$. We consider two following cases.
\begin{itemize}
\item If $q_{m}\in Q$, then since $\mathrsfs{A}\in \mathcal{S}_{a}$ there is a $u\in A^{*}$ such that $q_{m}\cdot u=e$. Hence we get
$q_{1}\ldots q_{m}\cdot u=q_{1}'\ldots q_{m-1}'e$. By the induction hypothesis there is a $w\in A^{*}$ such that $q_{1}'\ldots q_{m-1}'\cdot w=e^{m-1}$, whence $q_{1}\ldots q_{m}\cdot uw=e^{m}$, and so $uwA^{\omega}$ is a set of $g$-regular elements.
\item If $q_{m}\in Q^{-1}$, then there is a $u\in A^{*}$ such that $q_{m}^{-1}\cdot u=e$. Since $\mathrsfs{A}$ is invertible consider $u'=q_{m}\circ u$. Thus, $q_{m}\cdot u'=e$ and the rest of the proof proceeds like in the previous case.
\end{itemize}

Vice versa, let us show that if $\mathcal{R}(\mathrsfs{A})\not\in  \mathcal{S}_{a}$, then there is $g\in\mathcal{G}(\mathrsfs{A})$ that does not admit any $g$-regular sequence. The first condition implies that either $\mathcal{R}(\mathrsfs{A})$ has a unique sink, but the sink is not accesible from every state or $\mathcal{R}(\mathrsfs{A})=\mathrsfs{A}$ has no sink. The latter case gives that for every $g\in \mathcal{G}(\mathrsfs{A})$, $g$ has no $g$-regular element. The former case implies that there exists a state $q\in \mathcal{R}(Q)$ with the property that $q \ \widehat{\cdot} \ w\neq e$ for every $w\in A^{\ast}$. In particular $q$ does not admit any $q$-regular sequence.
\end{proof}
For every $q\in Q$, the erasing morphism $\epsilon_{q}:\wt{Q}^{*}\rightarrow \wt{Q}^{*}$ is the homomorphism that sends $q, q^{-1}$ to the empty word $1$. In this context, a word $w\in\wt{Q}^{*}$ is called trivial if $\oo{\epsilon_{e}(w)}=1$. The \emph{content} of $w\in\wt{Q}^{*}$ is the smallest subset $Q'\subseteq Q$ such that $w\in\wt{Q'}^{*}$. In other words, the content $C(w)$ is the set of elements $q\in Q$ such that either $q$ or $q^{-1}$ occurs in $w$; we say that $w$ has \emph{$m$ occurrences} if $|C(w)|=m$.
\begin{definition}[Fragile words]
Let $\mathrsfs{A}=(Q,A,\cdot,\circ)$ be an invertible transducer in $\mathcal{S}_{a}$ with sink state $e$. A non-trivial word $w\in\wt{Q}^{*}$ is called fragile if there is $a\in A$ such that $w\cdot a$ is trivial.
\end{definition}

We get the following necessary condition for $\mathcal{G}(\mathrsfs{A})$ to be non-free.

\begin{proposition}
With the above conditions, if $\mathcal{G}(\mathrsfs{A})$ is not free, then there is a shortest non-trivial relation $w\in\wt{Q}^{*}$ which is a fragile word.
\end{proposition}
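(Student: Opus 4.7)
The plan is to take an arbitrary shortest non-trivial relation $w\in\wt{Q}^{*}$, normalize it through cyclic conjugation and (possibly) inversion so that its first letter lies in $Q\setminus\{e\}$, and then \emph{slide} it along a word driving that first letter to the sink; the minimality of $w$ will then force the slide to produce a fragile witness.

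First I would verify the normalization. Since $w$ is non-trivial some letter of $w$ is not in $\{e,e^{-1}\}$. If $w$ were not cyclically reduced, then $w=q_1 w'q_1^{-1}$ and $w'$ would itself be a relation of strictly smaller reduced length, contradicting the minimality of $w$; hence $w$ is cyclically reduced. A cyclic conjugate of $w$ in $F_Q$ is still a relation of word length $m$, so up to rotation we may assume the first letter of $w$ is not $e^{\pm 1}$; if that letter lies in $Q^{-1}$, replace $w$ by $w^{-1}$ and rotate again. Therefore we may assume $q_1\in Q\setminus\{e\}$.

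By $\mathrsfs{A}\in\mathcal{S}_a$, there exists $v=a_1\ldots a_k\in A^{+}$ of minimal length with $q_1\cdot v=e$. The product-of-transducers identity
\[
(q_1\ldots q_m)\cdot u=(q_1\cdot u)\bigl(q_2\cdot(q_1\circ u)\bigr)\cdots\bigl(q_m\cdot(q_1\ldots q_{m-1}\circ u)\bigr)
\]
shows that the first letter of $w\cdot v$ is $e$. Since $e$ acts as the identity on $A^{*}$, sections of the identity are the identity, so $w\cdot u$ is a relation for every $u\in A^{*}$. In particular $\oo{\epsilon_e(w\cdot v)}$ is a relation of length at most $m-1$, and by minimality of $w$ it must be empty, i.e.\ $w\cdot v$ is trivial.

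Finally I would run the descent: in the sequence
\[
w\cdot 1,\ w\cdot a_1,\ w\cdot a_1a_2,\ \ldots,\ w\cdot v
\]
the first term is non-trivial and the last is trivial, so there is a smallest $j\in\{1,\ldots,k\}$ with $w\cdot a_1\ldots a_j$ trivial. Set $w^{\ast}=w\cdot a_1\ldots a_{j-1}$. Then $w^{\ast}$ is a length-$m$ relation which is non-trivial, hence $|\oo{\epsilon_e(w^{\ast})}|\le m$; minimality of $w$ forces equality, which means $w^{\ast}$ is reduced, contains no occurrence of $e^{\pm 1}$, and is itself a shortest non-trivial relation. Since $w^{\ast}\cdot a_j=w\cdot a_1\ldots a_j$ is trivial, $w^{\ast}$ is the desired fragile witness. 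The main obstacle is the normalization step: accessibility of the sink is only assumed inside $\mathrsfs{A}$ and not inside $\mathrsfs{A}^{-1}$, so ensuring $q_1\in Q\setminus\{e\}$ while preserving the minimal reduced length of $w$ is essential for the slide to succeed.
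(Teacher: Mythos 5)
Your proof is correct and follows essentially the same route as the paper's: drive the letter of $w$ that reads the raw input into the sink along a word $u$, use minimality to force the resulting relation to become trivial, and take the last non-trivial term of the intermediate sequence $w,\,w\cdot a_1,\,\ldots,\,w\cdot u$ as the fragile witness. The only differences are cosmetic --- you normalize by cyclic conjugation and inversion so that the relevant letter lies in $Q\setminus\{e\}$, where the paper instead handles the $Q^{-1}$ case by accessibility of $e^{-1}$ in $\mathrsfs{A}^{-1}$, and your explicit descent to the last non-trivial term makes precise a step the paper leaves implicit.
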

\begin{proof}
Take any shortest non-trivial relation $w\in\wt{Q}^{*}$, and assume $w=q_{k}^{f_{k}}\ldots q_{1}^{f_{1}}$, for some $f_{i}\in\{1,-1\}$. Since $q_{1}$ ($q_{1}^{-1}$ in $\mathrsfs{A}^{-1}$) is connected to the sink-state $e$ ($e^{-1}$) by some suitable word $u_{1}\ldots u_{\ell}=u\in A^{*}$, we get $w\cdot u=z_{k}\ldots z_{2}e^{f_{1}}$. Therefore, since $z_{k}\ldots z_{2}e^{f_{1}}$ is still a relation by Theorem \ref{theo: charact relations} and $|\epsilon_{e}(w\cdot u)|<k$, by minimality, we get that $z_{k}\ldots z_{2}e^{f_{1}}$ is trivial. Hence, $w\cdot (u_{1}\ldots u_{\ell-1})$ is fragile.
\end{proof}
Therefore, it is evident how fragile words play an important role as relations occurring in non-free groups defined by transducers with a sink state.

\subsection{A particular series of automata with sink state}\label{sec: with sink}
In this section we present a series of automata with sink such that, combined with already known transducers generating free groups, generate transducers with a sink state defining a free group. We also discuss about the sets of relations defining the groups generated by the duals of these transducers.
\begin{definition}
Given two transducers $\mathrsfs{A}=(Q,A,\cdot,\circ), \mathrsfs{B}$ on the same set of states $Q$, we say that $\mathrsfs{B}$ dually embeds into $\mathrsfs{A}$, in symbols $\mathrsfs{B}\hookrightarrow_{d} \mathrsfs{A}$, if $\partial \mathrsfs{B}$ is a proper connected component of $\partial \mathrsfs{A}$.
\end{definition}
Note that with the above condition if $B$ is the alphabets of $ \mathrsfs{B}$, then the coupled actions of $ \mathrsfs{B}$ are simply the coupled-actions of $ \mathrsfs{A}$ restricted to the alphabet $B$. For this reason, without loss of generality, we can write $ \mathrsfs{B}=(Q,B,\cdot,\circ)$.
\begin{lemma}\label{lem: dual embed}
Given two invertible transducers $\mathrsfs{A}=(Q,A,\cdot,\circ)$, $\mathrsfs{B}=(Q,B,\cdot,\circ)$ such that $\mathrsfs{B}\hookrightarrow_{d} \mathrsfs{A}$ there is an epimorphism $\psi\colon \mathcal{G}(\mathrsfs{A})\twoheadrightarrow  \mathcal{G}(\mathrsfs{B})$.
\end{lemma}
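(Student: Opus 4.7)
The plan is to apply Theorem \ref{theo: charact relations} to both transducers and show that the relation module of $\mathcal{G}(\mathrsfs{A})$ is contained in that of $\mathcal{G}(\mathrsfs{B})$. Write $\mathcal{G}(\mathrsfs{A})=F_Q/N_A$ and $\mathcal{G}(\mathrsfs{B})=F_Q/N_B$, and let $\mathcal{N}_A, \mathcal{N}_B\subseteq\wt{Q}^{*}$ be the maximal invariant subsets provided by Theorem \ref{theo: charact relations}, so that $N_A=\oo{\mathcal{N}_A}$ and $N_B=\oo{\mathcal{N}_B}$. If I can show $\mathcal{N}_A\subseteq \mathcal{N}_B$, then $N_A\subseteq N_B$, and the canonical projection $F_Q\to F_Q/N_B$ factors through $F_Q/N_A$ to give the desired surjection $\psi\colon\mathcal{G}(\mathrsfs{A})\twoheadrightarrow\mathcal{G}(\mathrsfs{B})$.

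The key observation, which I would verify first, is that since $\partial\mathrsfs{B}$ is a proper connected component of $\partial\mathrsfs{A}$, in particular $B\subseteq A$, and for any $b\in B$ and any $q\in \wt{Q}$ the transition $b\cdot q$ computed inside $\partial\mathrsfs{A}$ already lies in $B$; the same holds in the enriched dual $(\partial\mathrsfs{A})^{-}$, whose $B$-component is exactly $(\partial\mathrsfs{B})^{-}$. Consequently, every loop at $b$ in $(\partial\mathrsfs{A})^{-}$ is a loop at $b$ in $(\partial\mathrsfs{B})^{-}$ and conversely, which yields the equality
\[
L\bigl((\partial\mathrsfs{A})^{-},b\bigr)=L\bigl((\partial\mathrsfs{B})^{-},b\bigr)\quad\text{for every }b\in B.
\]

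Now I argue as follows. Take any $w\in\mathcal{N}_A$. By definition $w\in\bigcap_{a\in A}L((\partial\mathrsfs{A})^{-},a)$, so in particular $w\in L((\partial\mathrsfs{A})^{-},b)=L((\partial\mathrsfs{B})^{-},b)$ for every $b\in B$, giving
\[
\mathcal{N}_A\subseteq \bigcap_{b\in B}L\bigl((\partial\mathrsfs{B})^{-},b\bigr).
\]
Moreover, $\mathcal{N}_A$ is invariant under the action $A\overset{\cdot}{\curvearrowright}\wt{Q}^{*}$ of $(\partial\mathrsfs{A})^{-}$, and since $B\subseteq A$ and this action restricted to $B$ coincides with the action of $(\partial\mathrsfs{B})^{-}$, the set $\mathcal{N}_A$ is also invariant under $B\overset{\cdot}{\curvearrowright}\wt{Q}^{*}$. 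By the maximality of $\mathcal{N}_B$ in Theorem \ref{theo: charact relations}, this forces $\mathcal{N}_A\subseteq\mathcal{N}_B$, hence $N_A=\oo{\mathcal{N}_A}\subseteq\oo{\mathcal{N}_B}=N_B$, producing the epimorphism $\psi$.

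The only subtle point (and essentially the main thing to check carefully) is the identification of the $B$-component of the enriched dual $(\partial\mathrsfs{A})^{-}$ with $(\partial\mathrsfs{B})^{-}$: the enrichment operation $(\cdot)^{-}$ only adds formal inverses on the $\wt{Q}$-labels, so it respects the partition of $\partial\mathrsfs{A}$ into connected components on the vertex set $A$. Once this is observed, the equality of languages and the invariance argument are immediate, and everything else is a formal consequence of Theorem \ref{theo: charact relations}.
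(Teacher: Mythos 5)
Your proof is correct and follows essentially the same route as the paper: both reduce the statement to the inclusion $\mathcal{N}_A\subseteq\mathcal{N}_B$ of the maximal invariant sets from Theorem \ref{theo: charact relations}, the paper simply asserting that the component condition implies this while you spell out the identification of the $B$-component of $(\partial\mathrsfs{A})^{-}$ with $(\partial\mathrsfs{B})^{-}$ and the resulting equality of languages and restriction of invariance. The only blemish is notational: the state transition of the dual $\partial\mathrsfs{A}=(A,\wt{Q},\circ,\cdot)$ is written $b\circ q$ in the paper's convention, not $b\cdot q$ (which denotes the output), but this does not affect the argument.
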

\begin{proof}
Let $\mathcal{G}(\mathrsfs{A})=F_{Q}/N$, $\mathcal{G}(\mathrsfs{B})=F_{Q}/M$, and
$$
\mathcal{N}\subseteq \bigcap_{a\in A}L\left((\partial\mathrsfs{A})^{-},a\right), \quad \mathcal{M}\subseteq \bigcap_{b\in B}L\left((\partial\mathrsfs{B})^{-},b\right)
$$
be the sets as in Theorem \ref{theo: charact relations}. The fact that $\partial \mathrsfs{B}$ is a proper connected component of $\partial \mathrsfs{A}$ implies that $\mathcal{N}\subseteq \mathcal{M}$. Hence, $N\le M$ from which the statement follows.
\end{proof}
\noindent Note that an analogous lemma for semigroup automata holds. \\
The\emph{ sink transducer on an alphabet $A$} is $\mathrsfs{E}=(\{e\},A, \cdot, \circ)$ with $e\cdot a=e$, $e\circ a=a$, for all $a\in A$. Given a transducer $\mathrsfs{A}=(Q,A,\cdot,\circ)$ such that $e\notin Q$, we denote by $\mathrsfs{A}^{e}=\mathrsfs{A}\sqcup \mathrsfs{E}$. Note that adding a sink state does not change the group, i.e. $\mathcal{G}(\mathrsfs{A}^{e})\simeq \mathcal{G}(\mathrsfs{A})$. We now introduce a class of auxiliary automata with some interesting features. We present then via their duals, let $Q$ be a finite set, then $\partial\mathrsfs{S}_{Q}=(Q, Q\cup\{e\}, \circ, \cdot)$, where the actions (see Fig. \ref{Fig: automata with sink free}) are defined by:
$$
q\circ x=q,\quad
q\cdot x=\begin{cases}
e, \mbox{ if } x=q\\
x, \mbox{ otherwise }
\end{cases}
$$
for all $q\in Q$.
\begin{figure}
		\begin{center}
			\begin{tikzpicture}[>=latex, shorten >=1pt, shorten <=1pt]
				\tikzstyle{normal_node}= [draw,circle,inner sep=0pt,thick,minimum size=1cm]
				\draw (0,0) node [normal_node] (0) {$q_{1}$};
				\draw (3,0) node [normal_node] (1) {$q_{2}$};
				\draw (5,0) node [draw=none] (2) {$\ldots$};
				\draw (7,0) node [normal_node] (3) {$q_{k}$};				
				\path (0)
				(0) edge[->, out=110, in=70, distance=1cm, thick] node[above] {$q_{1}|e$} (0)
				(0) edge[->, out=290, in=250, distance=1cm, thick] node[below] {$\{y|y\colon y\neq q_{1}\}$} (0)
				(1) edge[->, out=110, in=70, distance=1cm, thick] node[above] {$q_{2}|e$} (1)
				(1) edge[->, out=290, in=250, distance=1cm, thick] node[below] {$\{y|y\colon y\neq q_{2}\}$} (1)
				(3) edge[->, out=110, in=70, distance=1cm, thick] node[above] {$q_{k}|e$} (3)
				(3) edge[->, out=290, in=250, distance=1cm, thick] node[below] {$\{y|y\colon y\neq q_{k}\}$} (3);			
			\end{tikzpicture}
			\label{automaton I}
		\end{center}
		\caption{The transducer $\partial\mathrsfs{S}_{Q}$ with $Q=\{q_{1},\ldots, q_{k}\}$.} \label{Fig: automata with sink free}
\end{figure}
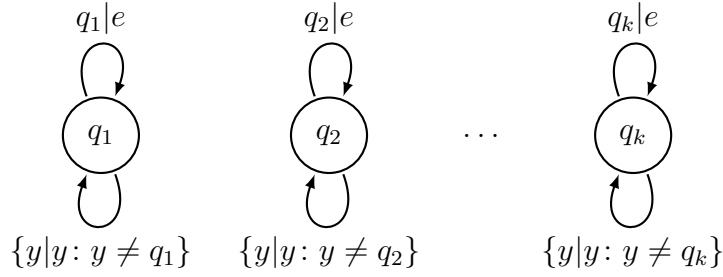
Note that with these actions $\partial\mathrsfs{S}_{Q}$ is reversible, hence by Proposition \ref{prop: dual prop} $\mathrsfs{S}_{Q}$ is invertible. The following proposition shows an important characteristic of these automata.
\begin{proposition}
Let $\mathrsfs{B}=(Q,B,\cdot,\circ)$ be an invertible transducer. Let $H\subseteq Q$, and put
$$
\partial\mathrsfs{A}_{H}=\partial\mathrsfs{B}^{e}\sqcup \partial \mathrsfs{S}_{H}
$$
Then, $\mathrsfs{A}_{H}$ is an automaton with sink and $\mathcal{G}(\mathrsfs{B})$ is a quotient of $\mathcal{G}(\mathrsfs{A}_{H})$. In particular, $\mathrsfs{A}_{Q}\in\mathcal{S}_{a}$ the class of transducers with sink that is accessible from any state.
\end{proposition}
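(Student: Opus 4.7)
The approach is to dualize $\partial\mathrsfs{A}_H=\partial\mathrsfs{B}^e\sqcup\partial\mathrsfs{S}_H$ in order to obtain an explicit description of $\mathrsfs{A}_H$, and then to verify the three claims by direct inspection together with a single application of Lemma~\ref{lem: dual embed}.

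I would first compute $\mathrsfs{A}_H$ explicitly. Viewing both $\partial\mathrsfs{B}^e$ and $\partial\mathrsfs{S}_H$ as transducers over the common alphabet $Q\cup\{e\}$ (extending $\partial\mathrsfs{S}_H$ by the natural identity-output loops $q\xrightarrow{x|x}q$ on letters $x\in Q\setminus H$ required to make the dualization well-defined), the disjoint union has state set $B\sqcup H$ and alphabet $Q\cup\{e\}$. Using the conversion rule $p\xrightarrow{a|b}q$ in $\mathrsfs{A}$ iff $a\xrightarrow{p|q}b$ in $\partial\mathrsfs{A}$, combined with the formulas $q\circ x=q$ and $q\cdot x=e$ if $x=q$ (otherwise $x$) defining $\partial\mathrsfs{S}_H$, a short computation gives that $\mathrsfs{A}_H$ has state set $Q\cup\{e\}$, alphabet $B\sqcup H$, and transitions of two types: on each $b\in B$, the states of $\mathrsfs{A}_H$ act exactly as in $\mathrsfs{B}^e$; on each $p\in H$, we obtain $q\xrightarrow{p|p}q$ for every $q\in (Q\cup\{e\})\setminus\{p\}$ together with the distinguished transition $p\xrightarrow{p|p}e$.

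From this description the first assertion is immediate: $e$ loops on every letter of $B\sqcup H$ with identity output, so it is a sink of $\mathrsfs{A}_H$. Invertibility of $\mathrsfs{A}_H$ follows similarly, since each state $q\in Q\cup\{e\}$ permutes $B$ (by invertibility of $\mathrsfs{B}^e$) and acts as the identity permutation on $H$. For the quotient statement I would invoke Lemma~\ref{lem: dual embed} applied to the pair $(\mathrsfs{B}^e,\mathrsfs{A}_H)$: both are invertible transducers on the common state set $Q\cup\{e\}$, and by the very construction $\partial\mathrsfs{B}^e$ is a connected component of $\partial\mathrsfs{A}_H$ disjoint from $\partial\mathrsfs{S}_H$, so $\mathrsfs{B}^e\hookrightarrow_d\mathrsfs{A}_H$. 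The lemma then yields an epimorphism $\mathcal{G}(\mathrsfs{A}_H)\twoheadrightarrow\mathcal{G}(\mathrsfs{B}^e)\simeq\mathcal{G}(\mathrsfs{B})$, the final isomorphism being the observation already noted in the text that adjoining a sink acting as the identity does not change the generated group.

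Specializing to $H=Q$, the transition $q\xrightarrow{q|q}e$ is available for every $q\in Q$, so the sink $e$ is reachable from each state; combined with invertibility and the identity action of $e$, this places $\mathrsfs{A}_Q$ in the class $\mathcal{S}_a$. No substantial obstacle is expected in this argument; the only delicate point is the bookkeeping of alphabets when interpreting the disjoint union $\partial\mathrsfs{B}^e\sqcup\partial\mathrsfs{S}_H$ and then dualizing it, which is precisely why I would start by writing down the explicit description of $\mathrsfs{A}_H$ and then read off the three claims from it.
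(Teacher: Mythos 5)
Your proof is correct and follows essentially the same route as the paper, whose entire argument is the one line ``this is a consequence of Lemma~\ref{lem: dual embed} and the definitions''; your explicit dualization of $\partial\mathrsfs{A}_H$ simply fills in the alphabet bookkeeping and the verification of the sink, invertibility, and accessibility claims that the paper leaves implicit. The only cosmetic caveat is that $\partial\mathrsfs{B}^{e}$ and $\partial\mathrsfs{S}_{H}$ need not each be a single connected component of $\partial\mathrsfs{A}_{H}$, so one is really invoking the evident extension of Lemma~\ref{lem: dual embed} to unions of connected components --- an imprecision already present in the paper's own definition of $\hookrightarrow_{d}$ and its use here.
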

\begin{proof}
This is a consequence of Lemma \ref{lem: dual embed} and the definitions.
\end{proof}
As a consequence we get the following immediate corollary.
\begin{corollary}
For any invertible transducer $\mathrsfs{B}$ defining a free group $F_{m}$, the transducer $\mathrsfs{A}_{Q}$ defined by
$$
\partial\mathrsfs{A}_{Q}=\partial\mathrsfs{B}^{e}\sqcup \partial \mathrsfs{S}_{Q}
$$
belongs to the class $\mathcal{S}_{a}$, and it defines a free group $F_{m}$.
\end{corollary}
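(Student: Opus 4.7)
The plan is to verify two separate statements: first that $\mathrsfs{A}_{Q}\in\mathcal{S}_{a}$, and second that $\mathcal{G}(\mathrsfs{A}_{Q})\simeq F_{m}$. The first statement is essentially an unpacking of the preceding proposition, specialized to $H=Q$; once one checks that $\partial\mathrsfs{S}_{Q}$ supplies, for every state $q\in Q$, an edge sending $q$ to the sink in $\mathrsfs{A}_{Q}$ upon reading the letter $q$, the sink is reached from every state and acts as the identity on the whole alphabet $B\sqcup Q$. So the substantive work is in identifying the group.

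For the group isomorphism, the preceding proposition already supplies an epimorphism $\psi\colon\mathcal{G}(\mathrsfs{A}_{Q})\twoheadrightarrow\mathcal{G}(\mathrsfs{B}^{e})\simeq\mathcal{G}(\mathrsfs{B})\simeq F_{m}$ coming from Lemma \ref{lem: dual embed} applied to the dual embedding $\mathrsfs{B}^{e}\hookrightarrow_{d}\mathrsfs{A}_{Q}$. The state set of $\mathrsfs{A}_{Q}$ is $Q\cup\{e\}$, but since $e$ is the sink acting as the identity it represents $\id$ in $\mathcal{G}(\mathrsfs{A}_{Q})$, so the group is generated by the $|Q|=m$ elements of $Q$ and one obtains a canonical surjection $\pi\colon F_{Q}\twoheadrightarrow\mathcal{G}(\mathrsfs{A}_{Q})$.

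To close the circle, I would observe that the epimorphism $\psi$ supplied by Lemma \ref{lem: dual embed} is induced by the identity on state sets; tracing through its proof, on a generator $q\in Q$ it returns the generator $q$ of $\mathcal{G}(\mathrsfs{B})$. Hence the composition $\psi\circ\pi\colon F_{Q}\rightarrow F_{Q}$ sends each $q$ to $q$, and therefore coincides with the identity on $F_{Q}=F_{m}$. Since $\psi\circ\pi=\id$ and $\pi$ is onto, $\pi$ must be injective, so both $\pi$ and $\psi$ are isomorphisms and $\mathcal{G}(\mathrsfs{A}_{Q})\simeq F_{m}$.

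The main obstacle is convincing oneself that $\psi$ indeed is the identity on generators, rather than merely an abstract surjection between two isomorphic copies of $F_{m}$. If that step feels uncomfortable, one can replace it by a direct faithfulness argument: a reduced word $w\in\wt{Q}^{*}$ with $\oo{w}\ne 1$ represents the same automorphism on $B^{*}\subseteq(B\sqcup Q)^{*}$ as the image of $w$ in $\mathcal{G}(\mathrsfs{B})=F_{m}$, because restriction to $B$-letters only sees the $\mathrsfs{B}^{e}$-component of the disjoint union; since $\oo{w}\ne 1$ acts non-trivially in the free group $\mathcal{G}(\mathrsfs{B})$, it acts non-trivially in $\mathcal{G}(\mathrsfs{A}_{Q})$, yielding the injectivity of $\pi$ without invoking the specific form of $\psi$.
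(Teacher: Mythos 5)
Your argument is correct and follows essentially the same route as the paper, which presents the corollary as an immediate consequence of the preceding proposition together with Lemma \ref{lem: dual embed}: that lemma says precisely that the relation subgroup $N$ of $\mathcal{G}(\mathrsfs{A}_{Q})=F_{Q}/N$ is contained in the relation subgroup $M$ of $\mathcal{G}(\mathrsfs{B})=F_{Q}/M$ (both quotients of the same $F_{Q}$ via the identity on generators), so $M=1$ forces $N=1$ --- which is exactly the content of your identity $\psi\circ\pi=\mathrm{id}_{F_{Q}}$ and, equivalently, of your restriction-to-$B^{*}$ argument. The only caveat, shared equally by the paper, is the implicit reading of ``$\mathrsfs{B}$ defines $F_{m}$'' as meaning that the $|Q|=m$ states form a free basis (true for the cited examples); without that, $N\le M$ alone would not give freeness of $F_{Q}/N$.
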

\noindent In particular taking any transducer in \cite{StVoVo2011} one is able to explicitly exhibit such machines.
In general, if $\mathrsfs{S}_{Q}$ dually embeds into any other transducer, then its minimal defining relations are all fragile words, and in this simple case these words have a nice property which is independent from the structure of the transducer and it can be described in a purely combinatorial way. A word $w\in \wt{Q'}^{*}$, with content $Q'\subseteq Q$, is called \emph{strongly fragile} on the set $Q'$ if
$$
\oo{\epsilon_{q}(w)}=1,\;\forall q\in Q'
$$
Although these words have this simple description, it seems that they are difficult to characterize. However, there is a simple way to generate some of them. If we order arbitrarily the set $Q'=\{q_1, \ldots, q_m\}$, where $m\geq 2$, we may define the set $\mathcal{C}(Q')$ of \textit{commutator words on $Q'$} given by the following properties. Let $\mathcal{C}_1$ be the set of words of type
$$
[q_i^{e_i},q_j^{e_j}]:=q_i^{e_i}q_j^{e_j}q_i^{-e_i}q_j^{-e_j}  \, \, \forall i\neq j, \, \, e_i,e_j\in \mathbb{Z}\setminus\{0\}
$$
and, inductively, let $\mathcal{C}_i$ be the set of words $[q_i^{e_i},v^{e}]:= q_i^{e_i}v^{e}q_i^{-e_i}v^{-e}$, $e_i,e\in \mathbb{Z}\setminus\{0\}$, where $q_i$ is not an occurrence of $v$ and $v\in \mathcal{C}_{i-1}$. We put $\mathcal{C}(Q'):=\mathcal{C}_{m-1}$.
This construction generates some strongly fragile words as the following proposition shows.
\begin{proposition}\label{lem: strongly fragile}
The following facts hold:
\begin{enumerate}
  \item[i)] The set $\mathcal{C}(Q')$ is a subset of the set of strongly fragile words on $Q'\subseteq Q$.
  \item[ii)] If $|Q'|=m$, the shortest strongly fragile words on $Q'$ which are in set $\mathcal{C}(Q')$ have length $3(2^{m-1})-2$.
\end{enumerate}
\end{proposition}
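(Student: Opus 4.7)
My plan is to handle the two parts by induction on the level $i$ in the definition of $\mathcal{C}_i$.

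For part (i), I would first observe that any word $w\in\mathcal{C}(Q')$ with $|Q'|=m$ lies in $\mathcal{C}_{m-1}$, and that if $w\in\mathcal{C}_i$ then its content has exactly $i+1$ elements of $Q'$ (base case $i=1$ is the single commutator, inductive step appends one fresh generator $q_i$ not occurring in $v$). I would then show by induction on $i$ the stronger claim: every $w\in\mathcal{C}_i$ is strongly fragile on its content $C(w)$. Base case $i=1$: for $w=[q_i^{e_i},q_j^{e_j}]$, direct computation gives $\epsilon_{q_i}(w)=q_j^{e_j}q_j^{-e_j}$ and $\epsilon_{q_j}(w)=q_i^{e_i}q_i^{-e_i}$, both reducing to $1$. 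Inductive step: take $w=[q_i^{e_i},v^e]$ with $v\in\mathcal{C}_{i-1}$ and $q_i\notin C(v)$. If $q=q_i$, then $\epsilon_{q_i}(w)=\epsilon_{q_i}(v^e)\,\epsilon_{q_i}(v^{-e})=v^e v^{-e}$ (since $q_i\notin C(v)$), which reduces to $1$. If $q\in C(v)$, then by the induction hypothesis $\oo{\epsilon_q(v)}=1$, so in $F_{Q'}$ we have $\sigma(\epsilon_q(v^{\pm e}))=\sigma(\epsilon_q(v))^{\pm e}=1$, and therefore $\sigma(\epsilon_q(w))=q_i^{e_i}\cdot 1\cdot q_i^{-e_i}\cdot 1=1$, giving $\oo{\epsilon_q(w)}=1$. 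Applying this at level $i=m-1$ proves part (i).

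For part (ii), let $L_i=\min\{|w|:w\in\mathcal{C}_i\}$. Each factor $q^{e}$ has word-length $|e|$ and each exponent lies in $\mathbb{Z}\setminus\{0\}$, so the minimum is achieved by taking all exponents equal to $\pm 1$. At the base, a word in $\mathcal{C}_1$ has length $2|e_i|+2|e_j|\geq 4$, so $L_1=4$. At level $i$, a word in $\mathcal{C}_i$ has length $2|e_i|+2|e|\cdot|v|$ for some $v\in\mathcal{C}_{i-1}$, minimized by $|e_i|=|e|=1$ and $|v|=L_{i-1}$, giving the recurrence $L_i=2+2L_{i-1}$. A straightforward induction yields $L_i=3\cdot 2^{i}-2$: assuming $L_{i-1}=3\cdot 2^{i-1}-2$, we get $L_i=2+2(3\cdot 2^{i-1}-2)=3\cdot 2^{i}-2$. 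Specialising to $i=m-1$ gives the desired value $3\cdot 2^{m-1}-2$.

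The only subtlety in the argument is the inductive step of part (i) when $q\neq q_i$: one needs to be careful that $\oo{\epsilon_q(v^{\pm e})}=1$ really follows from $\oo{\epsilon_q(v)}=1$. This is immediate once we pass to $F_{Q'}$ via $\sigma$, because $\sigma\circ\epsilon_q$ is a homomorphism and $\sigma(\epsilon_q(v))=1$ implies $\sigma(\epsilon_q(v))^{\pm e}=1$; since $\epsilon_q(v^{\pm e})=\epsilon_q(v)^{\pm e}$, the reduced form is trivial. I expect this bookkeeping—together with confirming that $q_i\notin C(v)$ is genuinely preserved by the inductive construction, so that erasing $q_i$ acts trivially on the inner block—to be the only real obstacle; once it is handled, both statements fall out of straightforward inductions.
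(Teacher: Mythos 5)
Your proof is correct and follows essentially the same route as the paper: an induction on the nesting level of the commutator construction (equivalent to the paper's induction on $|Q'|$), with the same direct computations $\oo{\epsilon_{q_i}(w)}=\oo{v^{e}v^{-e}}=1$ and $\oo{\epsilon_{q_j}(w)}=\oo{q_i^{e_i}q_i^{-e_i}}=1$ for part (i), and the same recurrence $L_i=2+2L_{i-1}$, $L_1=4$, for part (ii). Your extra care in justifying $\oo{\epsilon_q(v^{\pm e})}=1$ via the homomorphism $\sigma\circ\epsilon_q$ is a welcome tightening of a step the paper leaves implicit, but it is not a different argument.
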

\begin{proof} $i)$. Let us prove the statement by induction on $|Q'|=m\geq 2$. For $m=2$, we have that $\mathcal{C}(Q')$ is the set of commutator words in $q_1$ and $q_2$. It is straightforward to check that, for every $w\in \mathcal{C}(Q')$ one has $\overline{\epsilon_{q_1}(w)}=\overline{\epsilon_{q_2}(w)}=1$.
Suppose now that $1.$ is true for $|Q'|=m-1$. Let $w$ be a commutator word on $m$ occurrences, $w\in \mathcal{C}(Q'):=\mathcal{C}_{m-1}$. By definition $w=[q_i^{e_i},v^{e}]$, for some $v\in \mathcal{C}_{m-2}$ that does not contain $q_i$ as occurrence. Notice that $v$ is a commutator word on $Q' \setminus\{q_i\}$. Then $\overline{\epsilon_{q_i}(w)}=\overline{v^ev^{-e}}=1$ and for $j\neq i$, one gets $\overline{\epsilon_{q_j}(w)}= \overline{q_i^{e_i}q_i^{-e_i}}=1$, since $\overline{\epsilon_{q_j}(v^e)}=\overline{\epsilon_{q_j}(v^{-e})}=1$.\\
$ii).$ Clearly the shortest words are obtained when the exponents $e_i$'s belong to $\{-1, +1\}$. For $m=2$ we get $4=3(2^{2-1})-2$. By induction we have
$$
|w|=4+ 2|v|=2+2(3(2^{m-2})-2)=3(2^{m-1})-2
$$
\end{proof}
\noindent In what follows, we show that the commutator words are not the only strongly fragile words on $Q'$. On the other hand we conjecture that the such words are the shortest ones. This would give an exponential (on the number of occurrences) lower
bound on the length of the shortest relation on a group defined by a transducer in which $\mathrsfs{S}_{Q}$ dually embeds. This is may be interesting in understanding the existence of upper bounds that ensure the existence of relations. Probably, this bound does not exist (so this may be a clue of the undecidability of \texttt{not-FREENESS}), and a large (exponential) bound can be a sign of this fact.

\begin{figure}
\begin{center}
\begin{tikzpicture}
	\tikzstyle{axis}   = [-latex,black!100];

	\coordinate (P1) at (-7cm,1.5cm);
	\coordinate (P2) at (8cm,1.5cm);

	\coordinate (A1) at (0em,0cm); 
	\coordinate (A2) at (0em,-2cm); 

	\coordinate (A3) at ($(P1)!.8!(A2)$); 
	\coordinate (A4) at ($(P1)!.8!(A1)$);

	\coordinate (A7) at ($(P2)!.7!(A2)$);
	\coordinate (A8) at ($(P2)!.7!(A1)$);

	\coordinate (A5) at
	  (intersection cs: first line={(A8) -- (P1)},
			    second line={(A4) -- (P2)});
	\coordinate (A6) at
	  (intersection cs: first line={(A7) -- (P1)},
			    second line={(A3) -- (P2)});

	\fill[gray!90] (A2) -- (A3) -- (A6) -- (A7) -- cycle;
	
	\fill[gray!50] (A3) -- (A4) -- (A5) -- (A6) -- cycle;

	\fill[gray!30] (A5) -- (A6) -- (A7) -- (A8) -- cycle;

	\draw[thick,dashed] (A5) -- (A6);
	\draw[thick,dashed] (A3) -- (A6);
	\draw[thick,dashed] (A7) -- (A6);
	
	\fill[gray!50,opacity=0.2] (A1) -- (A2) -- (A3) -- (A4) -- cycle;
	\fill[gray!90,opacity=0.2] (A1) -- (A4) -- (A5) -- (A8) -- cycle;

	\draw[thick] (A1) -- node[right]{$b$}(A2);
	\draw[thick] (A3) -- node[left]{$b$}(A4);
	\draw[thick] (A7) -- node[right]{$b$}(A8);
	\draw[thick] (A1) -- node[below, pos=0.6]{$a$}(A4);
	\draw[thick] (A1) -- node[below right]{$c$} (A8);
	\draw[thick] (A2) -- node[below left]{$a$}(A3);
	\draw[thick] (A2) -- node[below right]{$c$} (A7);
	\draw[thick] (A4) -- node[above left]{$c$}  (A5);
	\draw[thick] (A8) -- node[above right]{$a$}(A5);
	
	\foreach \i in {1,...,8}
	{
	  \draw[fill=black] (A\i) circle (0.15em);
	}
	\draw[fill=black] (A4) circle (0.15em) node[above left] {$r$};
\end{tikzpicture}
\end{center}
\caption{The rooted 2-simplicial complex $(\mathcal{Q},r)$.}\label{Fig cube}
\end{figure}
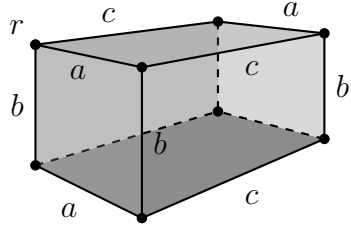

We now describe a geometric way to build some examples, however we do not know a characterization of all the strongly fragile words. For simplicity of exposition we restrict ourself to the alphabet $A=\{a,b,c\}$, but the following construction can be generalized to larger alphabets. Consider the rooted 2-simplicial complex $(\mathcal{Q},r)$ such that the geometrical realization of the 2-simplicial complex $\mathcal{Q}$ is the unit cube in $\mathbb{R}^{3}$ with orthonomal bases $\{\mathbf{i}_{x}, x\in A\}$, and let us fix a root (base-point) $r$ among the 0-faces of $\mathcal{Q}^{0}$. Let us label the 1-faces $\mathcal{Q}^{1}$ in such a way that the 1-faces along $\mathbf{i}_{x}$ are labelled by $x$, $x\in A$ (see Fig. \ref{Fig cube}). Consider now the set $\Gamma_{r}$ of closed paths in the $1$-simplicial complex $\mathcal{Q}^{1}$ starting from $r$; an element $p\in \Gamma_{r}$ can be represented as
$$
r=v_{0}\mapright{e_{1}}v_{1}\mapright{e_{2}}\ldots v_{i}\mapright{e_{i+1}}v_{i+1}\ldots v_{m-1}\mapright{e_{m}}v_{m}=r
$$
where $v_{i} \in  \mathcal{Q}^{0}$, and $e_{i} \in  \mathcal{Q}^{1}$ such that $v_{i}, v_{i+1}$ are the $0$-faces of $e_{i+1}$; in case
the only repeated vertex is $v_{0}$ we speak of a cycle. We can define a labeling map $\mu:  \Gamma_{r}\rightarrow \wt{A}^{*}$ defined by $\mu(p)=\mu(e_{1})\ldots \mu(e_{m})$ where  $\mu(e_{i})=x$ if $v_{i+1}=v_{i}+\mathbf{i}_{x}$, otherwise if $v_{i}=v_{i+1}+\mathbf{i}_{x}$ we put $\mu(e_{i})=x^{-1}$, $x\in A$. For each face $\sigma\in  \mathcal{Q}^{2}$ contained in an affine plane perpendicular to $\mathbf{i}_{c}$, and for a cycle $v_{0}\Pth{u}v_{0}$ in $\sigma^{1}$ we have that either $u$ is a cyclic shift of $aba^{-1}b^{-1}$ or $bab^{-1}a^{-1}$; with respect to the orientation $\mathbf{i}_{c}$, this corresponds to a clockwise, or a counter-clockwise travel in $\sigma^{1}$ around $\mathbf{i}_{c}$. Hence, we can define a map $\phi$ from the set $\Lambda_{r}$ of closed paths $p\in \Gamma_{r}$ of the form
$$
p=r\Pth{s}v_{0}\Pth{u}v_{0}\Pth{s^{-1}}r
$$
for any cycle $v_{0}\Pth{u}v_{0}$ in $\sigma^{1}$, $\sigma\in \mathcal{Q}^{2}$, into $\mathbb{R}^{3}$ sending $p$ to either $\mathbf{i}_{c}$ or $-\mathbf{i}_{c}$ whether or not $v_{0}\Pth{u}v_{0}$ is a cycle that is traveling clockwise around the normal $\mathbf{i}_{c}$ to the face $\sigma$. The set $\Lambda_{r}$ can be extended to a submonoid $\Lambda_{r}^{*}$ of $\Gamma_{r}$ by concatenation of closed paths. Consequently, the map $\phi$ can be naturally extended via $\phi(p_{1}p_{2})=\phi(p_{1})+\phi(p_{2})$. The following proposition shows another way to generate strongly fragile words.
\begin{proposition}
Every non-reduced word $\mu(p)\in \wt{A}^{*}$ such that $p\in \Lambda_{r}^{*}$ satisfies $\phi(p)=0$ is strongly fragile.
\end{proposition}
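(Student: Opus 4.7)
The plan is to reduce the claim to a winding-number computation, carried out one coordinate $q \in A$ at a time. For fixed $q$, consider the orthogonal projection $\pi_q\colon \mathbb{R}^3\to \mathbf{i}_q^{\perp}$. Restricted to the cube, this map collapses the four $\mathbf{i}_q$-edges of $\mathcal{Q}^1$ to points and identifies parallel edges in the two remaining directions, so that $\mathcal{Q}^1$ is sent onto a $4$-cycle graph $C_q$ whose edges are labelled by $A\setminus\{q\}$. For every path $\gamma$ in $\mathcal{Q}^1$, reading the labels of $\pi_q(\gamma)$ in $C_q$ gives exactly the word $\epsilon_q(\mu(\gamma))$, since the only letters suppressed by the projection are $q$ and $q^{-1}$.

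The key topological input is that $C_q$ is homotopy-equivalent to $S^1$, so $\pi_1(C_q,\pi_q(r))\cong \mathbb{Z}$; moreover, at each vertex of $C_q$ the two incident edges carry distinct letters from $A\setminus\{q\}$, and this absence of Stallings folds implies that the labeling induces an injection $\pi_1(C_q,\pi_q(r))\hookrightarrow F_{A\setminus\{q\}}$, whose image is the infinite cyclic subgroup generated by the commutator $[x,y]$ of the two letters $\{x,y\}=A\setminus\{q\}$. Consequently, a closed path at $\pi_q(r)$ in $C_q$ has label reducing to $1$ in $F_{A\setminus\{q\}}$ if and only if its winding number in $\pi_1(C_q,\pi_q(r))$ vanishes.

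The next step is to compute the winding number of $\pi_q(p_i)$ for each elementary loop $p_i=s_iu_is_i^{-1}\in \Lambda_r$. If the face $\sigma$ carrying $u_i$ has normal $\pm \mathbf{i}_{q'}$ with $q'\neq q$, then $u_i$ uses only edges in the $\mathbf{i}_q$ and $\mathbf{i}_{q''}$ directions (where $q''$ is the third letter); killing the $\mathbf{i}_q$-edges leaves a path that traverses two $\mathbf{i}_{q''}$-edges once in each direction, so $\pi_q(u_i)$ is null-homotopic in $C_q$, and thus so is $\pi_q(p_i)=\pi_q(s_i)\pi_q(u_i)\pi_q(s_i)^{-1}$. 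If instead $\sigma$ has normal $\pm \mathbf{i}_q$, then $u_i$ is a boundary traversal of $\sigma$, so $\pi_q(u_i)$ wraps around $C_q$ exactly once, with sign $+1$ or $-1$ according as $u_i$ runs clockwise or counter-clockwise about $\mathbf{i}_q$; by the very definition of $\phi$ this sign equals the $\mathbf{i}_q$-coordinate of $\phi(p_i)$.

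The argument then closes by additivity: winding numbers add under concatenation, while $\phi(p_1p_2)=\phi(p_1)+\phi(p_2)$, so
\[
\mathrm{wind}\bigl(\pi_q(p)\bigr)=\langle\phi(p),\mathbf{i}_q\rangle\qquad\text{for every } p\in\Lambda_r^{*},\; q\in A.
\]
The hypothesis $\phi(p)=0$ therefore forces all three winding numbers to vanish, and by the second paragraph this gives $\oo{\epsilon_q(\mu(p))}=1$ for every $q\in A$, which is precisely strong fragility of $\mu(p)$. The only delicate point I expect is the calibration of sign conventions so that ``clockwise around $\mathbf{i}_q$'' in the definition of $\phi$ matches the chosen orientation of $C_q$ used to count its winding, which is a right-hand-rule bookkeeping check. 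The hypothesis that $\mu(p)$ be non-reduced plays no essential role in the argument beyond excluding the degenerate constant-path case, ensuring that the strong-fragility assertion is non-vacuous.
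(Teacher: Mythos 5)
Your argument is correct, and it rests on the same skeleton as the paper's sketch: decompose $p$ into elementary loops of $\Lambda_{r}$ and, for each letter $q\in A$, show that after applying $\epsilon_{q}$ only the faces normal to $\mathbf{i}_{q}$ contribute, each by a signed unit, so that $\phi(p)=0$ forces $\oo{\epsilon_{q}(\mu(p))}=1$. Where you genuinely add something is in your second paragraph. The paper merely asserts (``it is not difficult to check'') that two elementary loops with opposite $\phi$-values have mutually inverse reduced $\epsilon_{q}$-images, which is precisely the step needed for the final cancellation to follow from a count of signs; you derive it from the observation that the projected labelled graph $C_{q}$ is folded, so its labeling embeds $\pi_1(C_{q},\pi_q(r))\cong\mathbb{Z}$ into $F_{A\setminus\{q\}}$ with cyclic image, whence every reduced $\epsilon_{q}$-image of a closed path at $r$ is a power of one fixed element with exponent the winding number. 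This upgrades the paper's counting heuristic to a genuine homomorphism-to-$\mathbb{Z}^{3}$ argument, and it is the more robust route: it adapts directly to larger alphabets (project onto each coordinate $2$-plane), which is the generalization the paper only alludes to. The sign-calibration issue you flag is real but harmless here, since only the vanishing of the winding numbers is used. One cosmetic point: the image subgroup is generated by the cyclic shift of $[x,y]^{\pm1}$ determined by the basepoint $\pi_q(r)$ rather than by $[x,y]$ itself, but this does not affect anything.
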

\begin{proof}
We just give a sketch of the proof and we leave the details to the reader. Take any $\mu(p)\in \wt{A}^{*}$ such that $p\in \Lambda_{r}^{*}$ as in the statement. Note that, for any $p_{1}\in\Lambda_{r}$ such that $\phi(p_{1})\in\{\mathbf{i}_{c},-\mathbf{i}_{c}\}$, $c\in A$, we get $\oo{\epsilon_{x}(p_{1})}=1$ for all $x\in A\setminus\{c\}$. Hence, for any $c\in A$, it is not difficult to check that
$$
\oo{\epsilon_{c}(\mu(p))}=\oo{\mu\left(\epsilon_{c}\left(q_{1}\ldots q_{k}\right)\right)}
$$
for some $q_{1},\ldots, q_{k}\in \Lambda_{r}$ with $\phi(q_{i})\in \{\mathbf{i}_{c},-\mathbf{i}_{c}\}$, $i=1,\ldots, k$, and $\sum_{i=1}^{k}\phi(q_{i})=0$. Now, it is not difficult to check that if $q_{i}, q_{j}\in\Lambda_{r}$ have the property that $\phi(q_{i})=-\phi(q_{j})$, then $\oo{\epsilon_{c}\left(\mu(q_{1})\right)}=\oo{\epsilon_{c}\left(\mu(q_{2})\right)}^{-1}$. From this fact and $\sum_{i=1}^{k}\phi(q_{i})=0$ we obtain $\oo{\epsilon_{c}(\mu(p))}=\oo{\mu\left(\epsilon_{c}\left(q_{1}\ldots q_{k}\right)\right)}=1$, and this concludes the proof.
\end{proof}
\noindent For instance, we can immediately compute the following strongly fragile word:
\begin{align}
\nonumber &&w=\oo{(ab^{-1}cbc^{-1}a^{-1})(ab^{-1}a^{-1}b)(cb^{-1}aba^{-1}c^{-1})(cb^{-1}c^{-1}b)}=\\
\nonumber &&=ab^{-1}cbc^{-1}b^{-1}a^{-1}bcb^{-1}aba^{-1}b^{-1}c^{-1}b
\end{align}
We leave open the problem of characterizing strongly fragile words, maybe, generalizing the previous geometric construction it is possible to find a way to characterize them.

\subsection{Cayley type transducers}\label{section: cayley}
We start with some examples considering Cayley machines types of transducers. The idea is to color a Cayley automaton in such a way that one obtains a reversible transducer which is interpreted as the dual transducer of an invertible one generating a group. This dual transducer generates in general a semigroup, unless it is bireversible. This coloring approach can be found also in \cite{DaRo2013}. The coloring that we present is in some way the ``easiest'', since besides the transitions entering and exiting from the  identity state, the others act like the identity.

Since both the states and the alphabet are elements of a group we need to carefully fix the notation. Let $G$ be a finite group with neutral element $e$. For any $g\in G$ we denote by $g^{\m}$ the unique inverse of $g$ in $G$. As usual, the product of $n$ elements $g_1,\ldots, g_n$ in $G$ is $g_1g_2\cdots g_n$. When we are interested in strings of elements in $G$, without invoking the composition law of the group $G$, we ``parenthesize'' the elements of $G$ and we put $(G)=\{(g):g\in G\}$, hence an element of $(G)^{*}$ is of the form $(g_1)(g_2)\cdots (g_n)$, where $g_i\in G$.

\begin{figure}
		\begin{center}
			\begin{tikzpicture}[>=latex, shorten >=1pt, shorten <=1pt]
				\tikzstyle{normal_node}= [draw,circle,inner sep=0pt,thick,minimum size=0.8cm]
				\draw (0,0) node [normal_node] (1) {$\textbf{1}$};
				\draw (4,-2) node [normal_node] (0) {$\textbf{0}$};
				\draw (0,-4) node [normal_node] (2) {$\textbf{2}$};
				\draw (6,0) node [normal_node] (0') {$0$};
				\draw (9,0) node [normal_node] (1') {$1$};
				\draw (6,-4) node [normal_node] (2') {$2$};		
				\path (0)
				(1) edge[->, thick] node[left] {$1|1$} (2)
				(2) edge[->, thick] node[below] {$1|0$} (0)
				(0) edge[->, thick] node[above] {$1|1$} (1)
				(2) edge[->, bend left=40, thick] node[left] {$2|2$} (1)
				(1) edge[->, bend left=40, thick] node[above] {$2|0$} (0)
				(0) edge[->, bend left=40, thick] node[below] {$2|2$} (2)
				(0) edge[->, out=-30, in=30, distance=1cm, thick] node[right] {$0|0$} (0)
				(1) edge[->, out=180, in=120, distance=1cm, thick] node[left] {$0|0$} (1)
				(2) edge[->, out=180, in=240, distance=1cm, thick] node[left] {$0|0$} (2)
				(1') edge[->, thick]  node[above] {$\textbf{2}|\textbf{0}$} (0')
				(2') edge[->, thick] node[right] {$\textbf{1}|\textbf{0}$} (0')
				(0') edge[->, out=180, in=120, distance=1cm, thick] node[inner sep=10pt, above] {$\left\{x|x: x=\textbf{0},\textbf{1},\textbf{2}\right\}$} (0')
				(2') edge[->, out=180, in=240, distance=1cm, thick] node[left] {$\textbf{2}|\textbf{1}$} (2')
				(2') edge[->, out=0, in=-60, distance=1cm, thick] node[right] {$\textbf{0}|\textbf{2}$} (2')
				(1') edge[->, out=0, in=60, distance=1cm, thick] node[right] {$\textbf{1}|\textbf{2}$} (1')
				(1') edge[->, out=0, in=-60, distance=1cm, thick] node[right] {$\textbf{0}|\textbf{1}$} (1');				
			\end{tikzpicture}
			\label{automaton I}
		\end{center}
		\caption{On the left the transducer $\mathrsfs{C}(\mathbb{Z}_3)$, on the right its dual $\partial\mathrsfs{C}(\mathbb{Z}_3)$.} \label{Fig: automata cayley 3}
\end{figure}
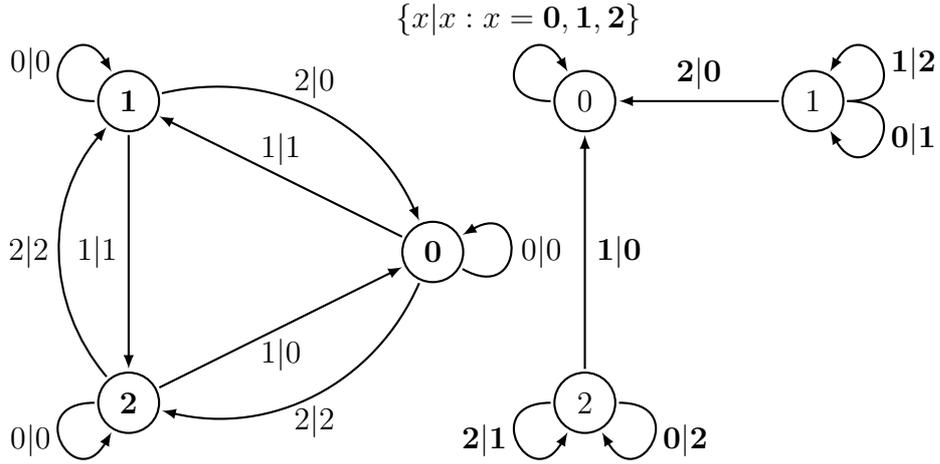
The \textit{0-transition Cayley machine} $\mathrsfs{C}(G)=(\mathbf{G},(G),\circ,\cdot)$ is the transducer defined on the alphabet $\mathbf{G}=\{\mathbf{g} : \ g\in G\}$ whose transitions are of the form
\begin{itemize}
\item $\mathbf{g}\vlongfr{(x)}{(x)}\mathbf{gx}$ for all $g,x\in G$ such that $g\neq x^{\m}$;
\item $\mathbf{g}\vlongfr{(x)}{(e)}\mathbf{e}$ for all $g,x\in G$ such that $g = x^{\m}$.
\end{itemize}
Recall that $\mathrsfs{C}(G)^{-}$ denotes the enriched transducer of $\mathrsfs{C}(G)$, and this acts on the rooted tree $\wt{(G)}^{*}$ where the set of formal inverses of $(G)$ is given by $(G)^{-1}=\{(g)^{-1}: \ (g)\in (G)\}$. With this notation, the inverse transitions of $\mathbf{g}\longfr{(x)}{(x)}\mathbf{gx}$, $\mathbf{g}\longfr{(x)}{(e)}\mathbf{e}$ are $\mathbf{gx}\vvlongfr{(x)^{-1}}{(x)^{-1}}\mathbf{g}$, $\mathbf{e}\vvvlongfr{(x)^{-1}}{(e)^{-1}}\mathbf{g}$, respectively.
\\
\\
Similarly, we define the \textit{bi-0-transition Cayley machine} $\mathrsfs{\wt{C}}(G)=(\mathbf{G},(G),\circ,\cdot)$ with transitions given by:\begin{itemize}
\item $\mathbf{g}\vlongfr{(x)}{(x)}\mathbf{gx}$ for all $g,x\in G$ such that $g\neq x^{\m}$ and $g\neq e$;
\item $\mathbf{g}\vlongfr{(x)}{(e)}\mathbf{e}$ for all $g,x\in G$ such that $g= x^{\m}$ and $g\neq e$;
\item $\mathbf{e}\vlongfr{(x)}{(e)}\mathbf{x}$ for all $x\in G$.
\end{itemize}
Notice that these machines act like the identity except for the transitions passing through the state $\mathbf{e}$.
\\
We stress once again the fact that $g$ represents an element of the finite group $G$ and $(g)$ an element of the alphabet $(G)$ of the Cayley machine.

\begin{lemma}\label{lem:prefragilita}
Let $\mathbf{h}$ be a state of $\mathrsfs{C}(G)=(\mathbf{G},(G),\circ,\cdot)$ or $\mathrsfs{\wt{C}}(G)=(\mathbf{G},(G),\circ,\cdot)$. Then $\mathbf{h}\circ (g_1)\cdots (g_k)=\mathbf{h}$ if and only if $g_1\cdots g_k=e$.
\end{lemma}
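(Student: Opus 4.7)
The plan is to observe that in both machines, despite the case analysis in the definition of the transitions, the state-transition $\circ$ coincides with plain multiplication in $G$, i.e.\ $\mathbf{g}\circ (x)=\mathbf{gx}$ for every $g,x\in G$. Once this is established, an immediate induction on $k$ gives
$$
\mathbf{h}\circ (g_{1})(g_{2})\cdots(g_{k})=\mathbf{h g_{1}g_{2}\cdots g_{k}},
$$
and since the map $g\mapsto\mathbf{g}$ is a bijection between $G$ and the set of states $\mathbf{G}$, the equality $\mathbf{h g_{1}\cdots g_{k}}=\mathbf{h}$ is equivalent to $hg_{1}\cdots g_{k}=h$, i.e.\ to $g_{1}\cdots g_{k}=e$.

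First, I would verify the key identity $\mathbf{g}\circ(x)=\mathbf{gx}$ for $\mathrsfs{C}(G)$. The definition splits into two cases: if $g\neq x^{\m}$, the transition $\mathbf{g}\,\vlongfr{(x)}{(x)}\,\mathbf{gx}$ gives $\mathbf{g}\circ(x)=\mathbf{gx}$ directly; if $g=x^{\m}$, the transition $\mathbf{g}\,\vlongfr{(x)}{(e)}\,\mathbf{e}$ yields $\mathbf{g}\circ(x)=\mathbf{e}$, and this still equals $\mathbf{gx}$ because in this case $gx=x^{\m}x=e$. So the two branches coherently describe a single rule $\mathbf{g}\circ(x)=\mathbf{gx}$.

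Next, I would perform the analogous verification for $\mathrsfs{\wt{C}}(G)$, which now has three branches. The first two (with $g\neq e$) are handled exactly as above: if $g\neq x^{\m}$ one reads off $\mathbf{g}\circ(x)=\mathbf{gx}$, and if $g=x^{\m}$ then $\mathbf{g}\circ(x)=\mathbf{e}=\mathbf{gx}$. The third branch covers the remaining case $g=e$, where $\mathbf{e}\circ(x)=\mathbf{x}=\mathbf{ex}$, again matching the multiplication rule. So in both machines $\mathbf{g}\circ(x)=\mathbf{gx}$ holds without exception.

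The induction on $k\geq 1$ is then routine using the inductive definition $\mathbf{h}\circ(g_{1})\cdots(g_{k})=(\mathbf{h}\circ(g_{1}))\circ(g_{2})\cdots(g_{k})=\mathbf{hg_{1}}\circ(g_{2})\cdots(g_{k})$, yielding $\mathbf{hg_{1}g_{2}\cdots g_{k}}$, and the stated equivalence follows at once from injectivity of $g\mapsto\mathbf{g}$. There is no real obstacle here; the only point that requires care is noticing that the apparently special ``sink'' transitions in both Cayley-type machines are not genuine exceptions but merely instances of the multiplication rule $\mathbf{g}\circ(x)=\mathbf{gx}$ specialized to the case $gx=e$.
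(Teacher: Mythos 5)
Your proposal is correct and follows essentially the same route as the paper: both rest on the observation that, in either machine, the state transitions ignore the case distinction and are just right multiplication in $G$ (equivalently, the input automaton is the Cayley graph of $G$ with generating set $G$), so a word labels a closed path at $\mathbf{h}$ exactly when its product is $e$. Your version merely makes explicit the case-by-case check of $\mathbf{g}\circ(x)=\mathbf{gx}$ and the induction that the paper leaves implicit.
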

\begin{proof}
It is enough to observe that a closed path in $\mathrsfs{C}(G)=(\mathbf{G},(G),\circ,\cdot)$ (or $\mathrsfs{\wt{C}}(G)=(\mathbf{G},(G),\circ,\cdot)$) corresponds to a closed path in the Cayley graph of $G$ with respect to the generating set $G$. This implies that any closed path is a relation of $G$.
\end{proof}

\begin{lemma}\label{lem:fragilita}
Let $u=(u_1)(u_2)\cdots (u_n)$ a relation of minimal length in $\mathcal{G}(\partial \mathrsfs{C}(G))$ (or $\mathcal{G}(\partial \mathrsfs{\wt{C}}(G))$). Then there exists $\mathbf{h}\in \mathbf{G}$ such that $v:= \oo{\epsilon_{(e)}(\mathbf{h}\cdot u)}=1$.
\end{lemma}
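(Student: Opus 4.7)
The strategy is to pick $\mathbf{h}$ so that the letter $(e)$ appears in $\mathbf{h}\cdot u$, and then invoke the minimality of $u$.

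First I would observe that the letter $(e)\in(G)$ represents the identity of $\mathcal{G}(\partial\mathrsfs{C}(G))$ (and of $\mathcal{G}(\partial\mathrsfs{\wt{C}}(G))$): in both duals the vertex $(e)$ is a sink with identity output, so the corresponding generator acts trivially on $\mathbf{G}^{*}$. Since $u=(u_1)\cdots(u_n)$ is a minimal positive relation, every $u_i$ must be different from $e$; otherwise deleting that occurrence of $(e)$ would produce a strictly shorter non-trivial positive relation, contradicting minimality. In particular $u_1^{\m}\in G\setminus\{e\}$, and $\mathbf{h}:=\mathbf{u_1^{\m}}\in\mathbf{G}$ is a legitimate candidate.

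Next I would unwind $\mathbf{h}\cdot u$ directly from the transition tables at the beginning of Section \ref{section: cayley}. A short computation shows that $\mathbf{h}\cdot u=(v_1)(v_2)\cdots(v_n)$, where $v_i=u_i$ if $hu_1u_2\cdots u_i\neq e$ and $v_i=e$ otherwise; the extra edges $\mathbf{e}\to\mathbf{x}$ present in $\partial\mathrsfs{\wt{C}}(G)$ can only create additional $(e)$'s, so the dichotomy $v_i\in\{u_i,e\}$ persists in both machines. For our chosen $\mathbf{h}$ the identity $hu_1=u_1^{\m}u_1=e$ forces $v_1=e$, and hence at least one occurrence of $(e)$ appears in $\mathbf{h}\cdot u$.

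Finally I would show that $\mathbf{h}\cdot u$ is itself a relation and apply the minimality of $u$. The $\mathbf{G}$-orbit of $u$ is made of positive relations (because $u$ acts as the identity on $\mathbf{G}^{*}$, and so do all of its sections), hence lies in $\bigcap_{\mathbf{g}}L(\mathrsfs{C}(G)^{-},\mathbf{g})$; being $\mathbf{G}$-invariant it is contained in the maximal invariant subset $\mathcal{N}$ of Theorem \ref{theo: charact relations}, so $\mathbf{h}\cdot u\in\mathcal{N}$ is again a positive relation. Since $(e)=\id$, erasing the $(e)$'s preserves the group element, and therefore $\epsilon_{(e)}(\mathbf{h}\cdot u)$ is still a positive relation, now of length at most $n-1<n$; by minimality of $u$ this relation must be the empty word, which immediately gives $v=\overline{\epsilon_{(e)}(\mathbf{h}\cdot u)}=1$. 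The only delicate point is the transition-table bookkeeping of the second paragraph and the verification that the $\mathbf{e}\to\mathbf{x}$ edges of $\partial\mathrsfs{\wt{C}}(G)$ do not spoil the formula for $v_i$; once that is in hand the rest follows at once.
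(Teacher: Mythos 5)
Your argument is, in substance, the paper's own proof of its \emph{first} case, but it silently restricts the lemma to positive words: you write ``minimal positive relation'' and your bookkeeping ($\mathbf{h}:=\mathbf{u_1}^{\m}$, and $v_i=u_i$ unless $hu_1\cdots u_i=e$) only makes sense when every letter $(u_i)$ lies in $(G)$. In the paper the letters of $u$ range over the involutive alphabet $\widetilde{(G)}=(G)\cup (G)^{-1}$ --- this is how the lemma is actually used later (for $uv^{-1}$ in the proof that $\mathcal{S}(\partial\mathrsfs{C}(G))$ is free, and for words $(v_1)^{e_1}\cdots(v_m)^{e_m}$ with $e_i\in\{-1,+1\}$ in the $\mathbb{Z}_n$ discussion). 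So the case in which $u$ begins with an inverse letter $(x)^{-1}$ is a genuine omission: there your choice of $\mathbf{h}$ does not apply, since on inverse letters the only transition of $\mathrsfs{C}(G)^{-}$ whose output is an occurrence of $(e)^{\pm1}$ is the one leaving the state $\mathbf{e}$, namely $\mathbf{e}\vvlongfr{(x)^{-1}}{(e)^{-1}}\mathbf{x}^{\m}$. The paper settles this second case by simply taking $\mathbf{h}=\mathbf{e}$, so that the first output letter is $(e)^{-1}$, which $\epsilon_{(e)}$ erases, and the same minimality argument applies. Note also that this case cannot be waved away by inverting or cyclically shifting $u$ without extra justification.

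Everything else in your write-up matches the paper's route for the positive case: $(e)$ is a sink of the dual acting as the identity, so erasing $(e)^{\pm1}$ preserves the represented group element; $\mathbf{h}\cdot u$ is again a relation because the relation set is invariant under the action (Theorem \ref{theo: charact relations}); the first output letter being $(e)$ makes $\epsilon_{(e)}(\mathbf{h}\cdot u)$ strictly shorter than $u$; and minimality then forces $\oo{\epsilon_{(e)}(\mathbf{h}\cdot u)}=1$. Your preliminary remark that no $u_i$ equals $e$ is harmless but not needed for the argument. To complete the proof you only need to add the inverse-first-letter case with $\mathbf{h}=\mathbf{e}$.
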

\begin{proof}
We study only the case of $\mathcal{G}(\partial \mathrsfs{C}(G))$, the other case of $\mathcal{G}(\partial \mathrsfs{\wt{C}}(G))$ is left to the reader. We have two possibilities:
\begin{itemize}
\item The element $(u_1)\in (G)$. Then $\mathbf{u_1}^{\m}\cdot (u_1)=(e)$. This implies that
$$
|v|=|\oo{\epsilon_{(e)}(\mathbf{h}\cdot u)}|\leq |u|-1
$$
Since the set of relations is closed under the operation $\cdot$ we have that $v$ is also a relation. The minimality of the hypothesis implies that $v$ can be reduced to the trivial element $(e)$.
\item If $(u_1)^{-1}\in (G)^{-1}$ it is enough to take $\mathbf{h}=\mathbf{e}$.
\end{itemize}
\end{proof}

Now we give the self-similar presentation of the elements of $\mathcal{G}(\partial \mathrsfs{C}(G))$ and $\mathcal{G}(\partial \wt{\mathrsfs{C}}(G))$. Since the group $G$ is supposed to be finite we can order its elements as $G=\{g_0=e,g_1,\ldots, g_{|G|}\}$. The group $\mathcal{G}(\partial \mathrsfs{C}(G))$ acts on the set $\mathbf{G}^{\ast}\sqcup \mathbf{G}^{\omega}$ and it is generated by $(G)$. In a natural way, any $(g_i)$ induces a permutation $\sigma_i\in Sym(|\mathbf{G}|)$ on the set $\mathbf{G}$ defined by $\sigma_i(\mathbf{h})=\mathbf{h}\circ (g_i)$. Let $i^{\m}$ be the index such that $g_ig_{i^{\m}}=e$. With this notation, we get the \textit{self-similar} representation
$$
(g_i)=((g_i),\ldots, (g_i),\underbrace{(e)}_{i^{\m}},(g_i),\ldots, (g_i))\sigma_i.
$$
For the group $\mathcal{G}(\partial \mathrsfs{\wt{C}}(G))$, we have
$$
(g_i)=((e),\ldots, (g_i),\underbrace{(e)}_{i^{\m}},(g_i),\ldots, (g_i))\sigma_i.
$$

\begin{proposition}
For any non trivial group $G$, the semigroups $\mathcal{S}(\partial\mathrsfs{C}(G))$ is free.
\end{proposition}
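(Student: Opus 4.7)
The plan is to show that the natural map from the free monoid on $(G)\setminus\{(e)\}$ onto $\mathcal{S}(\partial\mathrsfs{C}(G))$ is injective, by strong induction on $|u|+|v|$: given distinct $u,v\in((G)\setminus\{(e)\})^+$ we aim to produce an input on which the induced endomorphisms $s_u,s_v$ of $\mathbf{G}^*$ differ. Identifying $(e)$ with the identity $\id$ of $\mathcal{S}(\partial\mathrsfs{C}(G))$ is harmless, since the state $(e)$ in $\partial\mathrsfs{C}(G)$ acts as the identity on the tree.

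The base case $|u|=|v|=1$ is immediate: the unique edge at state $\mathbf{h}$ on input $(x)$ in $\mathrsfs{C}(G)$ produces output $\mathbf{hx}$ whether or not $h=x^{\m}$ (in the sink case $\mathbf{hx}=\mathbf{e}$, but it is still a single output letter), so each generator $(x)$ acts on the first level of the tree as the right translation $\mathbf{h}\mapsto\mathbf{hx}$, and distinct single generators are thereby distinguished on $\mathbf{G}$.

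For the inductive step write $u=(x_1)\cdots(x_k)$, $v=(y_1)\cdots(y_m)$ and set $p_i=x_kx_{k-1}\cdots x_i$, $q_j=y_my_{m-1}\cdots y_j$. The base-case computation shows that the first-level action of $s_u$ is right multiplication by $g_0:=p_1$, so $s_u=s_v$ forces $p_1=q_1=:g_0$. Composing the self-similar representation $(x_i)=((x_i),\ldots,(e),\ldots,(x_i))\sigma_{x_i}$ (with the $(e)$ at the slot $\mathbf{x_i^{\m}}$) along $u$, a routine induction on $k$ shows that the subtree of $s_u$ at the vertex $\mathbf{h}\in\mathbf{G}$ is represented by the word $u|_h$ obtained from $u$ by erasing position $i$ precisely when $h\cdot p_i=e$ (each $(e)$-factor in the composed word vanishes because $(e)=\id$). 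In particular $s_u=s_v$ implies $s_{u|_h}=s_{v|_h}$ for every $\mathbf{h}\in\mathbf{G}$. Choosing $\mathbf{h}=\mathbf{g_0^{\m}}$ erases at least position $1$ from both $u$ and $v$, so the inductive hypothesis applies to $(u|_h,v|_h)$ and yields $u|_h=v|_h$ as words.

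In the ``generic'' sub-case, when no index $i>1$ satisfies $x_{i-1}\cdots x_1=e$ and no index $j>1$ satisfies $y_{j-1}\cdots y_1=e$, this reads $u|_h=(x_2)\cdots(x_k)$ and $v|_h=(y_2)\cdots(y_m)$; combined with $x_k\cdots x_1=y_m\cdots y_1$ the word equality immediately forces $u=v$, contradicting $u\ne v$. The main obstacle is the case of ``internal cuts'', i.e.\ indices $i>1$ with $x_{i-1}\cdots x_1=e$ (which are precisely the positions identified by Lemma \ref{lem:prefragilita}). I would handle this case by invoking the subtree equalities at every other vertex $\mathbf{h}=\mathbf{g^{\m}}$ with $g\in\{p_i\}\cup\{q_j\}$: a length-count argument over $g$ forces $|u|=|v|$ and shows that the label multisets $\{p_i\}$ and $\{q_j\}$ coincide; then, partitioning $u$ and $v$ into the blocks delimited by successive cut points---each such block apart from the last having reverse letter-product equal to $e$ in $G$, and hence itself representing a shorter instance of the freeness problem---the inductive hypothesis applies block by block. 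The combinatorial alignment of the cut-induced blocks of $u$ with those of $v$ is where the main technical effort lies.
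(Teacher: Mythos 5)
Your reduction machinery is correct as far as it goes: the wreath-recursion computation showing that the section of $s_u$ at $\mathbf{h}$ is $u$ with position $i$ erased exactly when $hp_i=e$ is right, the base case is right, the generic sub-case is handled, and the length count over the vertices $\mathbf{g^{\m}}$ does yield $|u|=|v|$ and the equality of the multisets $\{p_i\}$ and $\{q_j\}$. But the proof stops exactly where the real difficulty begins. In the internal-cut case you propose to ``apply the inductive hypothesis block by block'' to the blocks delimited by the cut points; to do that you would need to know that corresponding blocks $B_t$ of $u$ and $C_t$ of $v$ define \emph{equal} elements of $\mathcal{S}(\partial\mathrsfs{C}(G))$, and nothing you have established gives this: from $s_{B_1}\cdots s_{B_r}=s_{C_1}\cdots s_{C_r}$ one cannot isolate individual factors, even using cancellativity, and the fact that each block has trivial letter-product in $G$ only says it acts trivially on the first level. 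Likewise, knowing that $u$ and $v$ become equal after deleting their respective (a priori differently positioned) cut sets does not by itself force $u=v$; this is precisely the ``alignment'' issue, which you name but do not resolve. Since every nontrivial relation would manifest exactly in this case, the argument as written does not prove the proposition.

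For comparison, the paper avoids this combinatorial impasse by a different route: it uses that $\mathcal{S}(\partial\mathrsfs{C}(G))$ embeds in the group $\mathcal{G}(\partial\mathrsfs{C}(G))$ (hence is cancellative), converts a hypothetical identity $u=v$ of positive words into a group relation $uv^{-1}$ of minimal length, and then processes $uv^{-1}$ through the automaton starting from the state $\mathbf{(u_1\cdots u_{n-1})^{\m}}$. This produces a strictly shorter relation (at least one letter becomes $(e)$), which by minimality must freely reduce to the empty word; but the two letters straddling the junction between $u$ and $v^{-1}$ are output unchanged, so no cancellation $(x)(x)^{-1}$ can occur there --- a contradiction. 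If you want to salvage your section-based approach, you would need an actual argument for the alignment step, for instance by iterating the section operation at deeper levels or by choosing vertices at which exactly one position is erased; as it stands, the central case is a genuine gap.
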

\begin{proof}
Suppose contrary to our claim that $\mathcal{S}(\partial \mathrsfs{C}(G))$ is not free. Hence there are $u=(u_1)\cdots (u_n)$ and $v=(v_1)\cdots (v_k)$, such that $u=v$ and $(u_i), (v_i)\in (G)$. Since the semigroup $\mathcal{S}(\partial \mathrsfs{C}(G))$ is cancellative, the last statement is equivalent to say that $uv^{-1}$ is a relation in $\mathcal{G}(\partial \mathrsfs{C}(G))$. We can suppose that among the relations of this form, $uv^{-1}$ is minimal with respect to the length. Since $(G)$ and $(G)^{-1}$ are invariant under the action of $\partial \mathrsfs{C}(G)$, we have from minimality that, for any $\textbf{g}\in \textbf{G}$ either $\textbf{g}\cdot uv^{-1}=uv^{-1}$ or $\overline{\epsilon_{(e)}(\textbf{g}\cdot uv^{-1})}=1$. Otherwise we would get a shorter relation of the same form. First observe that $u_1\cdots u_nv_k^{-1}\cdots v_1^{-1}=e$ (in $G$) and for every $i=1,\ldots,n$, $j=1,\ldots, k$ there exist elements $\textbf{g}_i, \textbf{h}_j$ such that $\textbf{g}_i\circ (u_1)\cdots (u_i)= \textbf{e}$ and $\textbf{h}_j\circ (u_1)\cdots (u_n)(v_k)^{-1}\cdots (v_i)^{-1}= \textbf{e}$. Let us consider the element $\textbf{g}_{n-1}$ and apply it in order to get a new word $|\overline{\epsilon_{(e)}(\textbf{g}_{n-1}\cdot uv^{-1})}|<|uv^{-1}|$, which must be reduced by free cancellation because of the minimality of $uv^{-1}$. On the other hand, by direct computation we have
$$
(\textbf{g}_{n-1}\circ (u_1)\cdots (u_{n-1}))\cdot (u_{n})=(u_n)
$$
and
$$
   (\textbf{g}_{n-1}\circ (u_1)\cdots (u_n))\cdot (v_k)^{-1}=(v_k)^{-1}
  $$
This means that there is no cancelation of occurrences $(x)(x)^{-1}$ but this is absurd. We have only to treat the case when $n=1$ (resp. $k=1$), in this case we may get a relation $\textbf{g}\cdot uv^{-1}$ with no occurrences in $(G)$ (resp. $(G)^{-1}$). By using a minimality argument similar to what seen before one gets the assertion.
\end{proof}

The previous result immediately gives a result on the growth of the generated group \cite{Harp}.

\begin{corollary}
The groups $\mathcal{G}(\partial\mathrsfs{C}(G))$ has exponential growth, for any non trivial group $G$.
\end{corollary}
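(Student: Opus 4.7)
The plan is to deduce exponential growth of the group directly from the freeness of the positive semigroup, using the fact that a free semigroup on at least two generators already has exponential growth.

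First, I would note that $\mathcal{G}(\partial \mathrsfs{C}(G))$ is generated (as a group) by the finite symmetric set $(G) \cup (G)^{-1}$, and the semigroup $\mathcal{S}(\partial \mathrsfs{C}(G))$ is precisely the sub-semigroup generated by the positive part $(G)$ under the canonical map $(G)^{+} \to \mathcal{G}(\partial \mathrsfs{C}(G))$. By the previous proposition this map is injective, so the number of distinct elements of $\mathcal{G}(\partial \mathrsfs{C}(G))$ represented by positive words of length exactly $n$ equals $|G|^{n}$.

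Next, I would observe that every positive word of length $n$ has length at most $n$ as a word in the symmetric generating set $(G) \cup (G)^{-1}$, and hence sits inside the ball $B_{n}$ of radius $n$ of the Cayley graph of $\mathcal{G}(\partial \mathrsfs{C}(G))$ with respect to $(G) \cup (G)^{-1}$. Therefore
\[
|B_{n}| \;\geq\; |G|^{n}.
\]
Since $G$ is non-trivial, $|G| \geq 2$, and this lower bound is strictly exponential in $n$, giving exponential growth of $\mathcal{G}(\partial \mathrsfs{C}(G))$ with respect to any finite generating set (growth type being a quasi-isometry invariant).

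There is essentially no obstacle once the previous proposition is in hand: the only subtlety is making sure that positive words that are distinct in the free semigroup remain distinct in the group, which is exactly the content of freeness of $\mathcal{S}(\partial \mathrsfs{C}(G))$ combined with the fact that $\mathcal{S}(\partial \mathrsfs{C}(G))$ embeds in $\mathcal{G}(\partial \mathrsfs{C}(G))$ (the semigroup is cancellative and is the image of $(G)^{+}$ under a group homomorphism, so two positive words equal in the group are equal in the semigroup).
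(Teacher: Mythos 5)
Your proposal is correct and is essentially the paper's argument: the paper simply cites the standard fact (from de la Harpe's book) that a group containing a free sub-semigroup of rank at least two has exponential growth, and your write-up is just the explicit unpacking of that fact via the count $|B_n|\geq |G|^n\geq 2^n$. The one point worth noting is that the embedding of the free semigroup into the group is immediate here because both $\mathcal{S}(\partial\mathrsfs{C}(G))$ and $\mathcal{G}(\partial\mathrsfs{C}(G))$ are realized concretely as transformations of the same rooted tree, so distinct semigroup elements are automatically distinct group elements.
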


We stress once more the fact that the elements $(g)$ and $(g^{\m})$ of $\mathcal{G}(\partial\mathrsfs{C}(G))$ are not inverses to each other. On the other hand we have the following result.

\begin{proposition}
For any non trivial group $G$, $\mathcal{G}(\partial\widetilde{ \mathrsfs{C}}(G))$ is not free. In particular, for all $g\in G$, one has $(g)^{-1}=(g^{\m})$.
\end{proposition}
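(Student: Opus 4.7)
The plan is to prove the stronger claim $(g)^{-1} = (g^{\m})$ directly, by verifying that the product state $(g)(g^{\m})$ of $\partial\widetilde{\mathrsfs{C}}(G)$ acts as the identity on $\mathbf{G}^{*}$. Once this is in hand, pick any $g\in G$ with $g\neq e$ (which exists since $G$ is non-trivial): either $g\neq g^{\m}$ and $(g)(g^{\m})=\id$ is a positive relation in two distinct generators, or $g = g^{\m}$ and $(g)^{2} = \id$ is a torsion relation. Either way $\mathcal{G}(\partial\widetilde{\mathrsfs{C}}(G))$ admits a non-trivial relation and therefore cannot be free.

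The main step is a self-similar case analysis on the leading letter $\mathbf{h}$ of a word in $\mathbf{G}^{*}$. First I would read off the coupled actions of the dual from the three transition rules of $\widetilde{\mathrsfs{C}}(G)$: for $(g_i)\in(G)$ one has $(g_i)\circ\mathbf{e}=\mathbf{g_i}$, $(g_i)\circ\mathbf{g_i^{\m}}=\mathbf{e}$ when $g_i\neq e$, and $(g_i)\circ\mathbf{h}=\mathbf{h g_i}$ otherwise, while $(g_i)\cdot\mathbf{e}=(e)$, $(g_i)\cdot\mathbf{g_i^{\m}}=(e)$, and $(g_i)\cdot\mathbf{h}=(g_i)$ in the remaining case. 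Since $(e)$ acts trivially, the composite $(e)(e)$ acts as the identity. Applying $(g)(g^{\m})$ to $\mathbf{h}$ (first $(g^{\m})$, then $(g)$) then splits into:
\begin{itemize}
\item $\mathbf{h}=\mathbf{e}$: output $\mathbf{e}\mapsto\mathbf{g^{\m}}\mapsto\mathbf{e}$; residual state $(e)(e)$.
\item $\mathbf{h}=\mathbf{g}$: output $\mathbf{g}\mapsto\mathbf{e}\mapsto\mathbf{g}$; residual state $(e)(e)$.
\item $\mathbf{h}\notin\{\mathbf{e},\mathbf{g}\}$: output $\mathbf{h}\mapsto\mathbf{h g^{\m}}\mapsto\mathbf{h g^{\m} g}=\mathbf{h}$; residual state $(g)(g^{\m})$.
\end{itemize}
In every case the output letter equals $\mathbf{h}$ and the residual state is either $(e)(e)$ or $(g)(g^{\m})$ itself. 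A trivial induction on the length of the input word then yields $(g)(g^{\m})\circ w = w$ for every $w\in\mathbf{G}^{*}$, so $(g)(g^{\m}) = \id$ and hence $(g)^{-1} = (g^{\m})$.

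The only delicate points are translating the defining transitions of $\widetilde{\mathrsfs{C}}(G)$ into the coupled actions of the dual correctly (taking care of the input/output swap), and isolating the two exceptional letters $\mathbf{e}$ and $\mathbf{g}$ where the uniform right-multiplication rule fails. There is no deeper obstacle: the self-similarity is tight enough that the three cases close on themselves, and the non-freeness conclusion follows immediately once the identity $(g)(g^{\m}) = \id$ has been established.
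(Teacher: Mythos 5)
Your proof is correct and follows essentially the same route as the paper: both compute the wreath recursion of the product of $(g)$ and $(g^{\m})$ from the dual transitions, observe that every section is either a product of copies of the identity state $(e)$ or the product itself while the action on the first letter is trivial, and conclude that the product is the identity, which yields the non-trivial relation. The only cosmetic differences are that you compose the two generators in the opposite order (immaterial, since a one-sided inverse in a group is two-sided and the paper checks both orders anyway) and that you handle the case $g=g^{\m}$ uniformly rather than as a separate subcase.
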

\begin{proof}
Firstly let us order the elements of $G$ in such a way that $e$ is the first element, $g$ the second element and $g^{\m}$ the third one, under the condition that $g^2\neq e$. By using this convention, the self-similar representation of the generators $(g)$ and $(g^{\m})$ of $\mathcal{G}(\partial\widetilde{ \mathrsfs{C}}(G))$ is
$$
(g)=((e), (g), (e) , (g), \ldots, (g))\sigma
$$
and
$$
(g^{\m})=((e), (e), (g^{\m}) , \ldots, (g^{\m}))\sigma^{-1}
$$
for some permutation $\sigma\in Sym(|\mathbf{G}|)$. By using the fact that
$$
\mathbf{g}\longfr{(g^{\m})}{(e)}\mathbf{e} \textrm{ \,  and  \, } \mathbf{g^{\m}}\longfr{(g)}{(e)}\mathbf{e}
$$
we get
$$
(g)(g^{\m})=((e),(g)(g^{\m}), (e), (g)(g^{\m}), \ldots, (g)(g^{\m}))
$$
and, similarly
$$
(g^{\m})(g)=((e),(e), (g^{\m})(g), \ldots, (g^{\m})(g))
$$
It is an easy exercise to prove that $(g)(g^{\m})$ and $(g^{\m})(g)$ coincide with the trivial automorphism. If $g=g^{\m}$ the proof works in the same way by showing that $(g)^2$ is trivial.
\end{proof}

We end this section with the following easy observation

\begin{proposition}
The (finite) group $G$ is a quotient of $\mathcal{G}(\partial\mathrsfs{C}(G))$ and $\mathcal{G}(\partial\widetilde{ \mathrsfs{C}}(G))$.
\end{proposition}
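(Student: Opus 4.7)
The plan is to exhibit a surjective homomorphism onto $G$ coming from the first-level action of the corresponding group on the rooted tree $\mathbf{G}^{*}$. By definition of the dual, a transition $\mathbf{g}\longfr{(x)}{(y)}\mathbf{h}$ in $\mathrsfs{C}(G)$ (or $\widetilde{\mathrsfs{C}}(G)$) becomes an edge $(x)\longfr{\mathbf{g}}{\mathbf{h}}(y')$ in the dual, so the action $\circ$ of the state $(x)$ on the letter $\mathbf{g}$ is given by $(x)\circ \mathbf{g}=\mathbf{h}$. First I would check case-by-case that in both $\partial \mathrsfs{C}(G)$ and $\partial\widetilde{\mathrsfs{C}}(G)$ one has uniformly
$$
(x)\circ \mathbf{g}=\mathbf{gx},\qquad \forall g,x\in G.
$$
Indeed, for $\mathrsfs{C}(G)$ this reads off directly from the two rules (the second one being consistent because $g=x^{\m}$ forces $\mathbf{gx}=\mathbf{e}$), and for $\widetilde{\mathrsfs{C}}(G)$ the additional rule $\mathbf{e}\longfr{(x)}{(e)}\mathbf{x}$ still gives $\mathbf{e}\cdot (x)=\mathbf{x}=\mathbf{ex}$.

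Next, since $\mathcal{G}(\partial\mathrsfs{C}(G))$ acts by automorphisms on $\mathbf{G}^{*}$, the restriction of this action to the first level $\mathbf{G}$ yields a group homomorphism
$$
\phi\colon \mathcal{G}(\partial\mathrsfs{C}(G))\longrightarrow \mathrm{Sym}(\mathbf{G}).
$$
By the previous step, $\phi((g))$ is the permutation $\mathbf{h}\mapsto \mathbf{hg}$ of $\mathbf{G}$, i.e.\ the image of $g$ under the right regular representation $\rho\colon G\hookrightarrow \mathrm{Sym}(\mathbf{G})$. In particular $\phi$ takes values in the subgroup $\rho(G)$, and composing with the inverse of $\rho$ (which is an isomorphism onto its image since $\rho$ is faithful) we obtain a group homomorphism
$$
\psi\colon \mathcal{G}(\partial\mathrsfs{C}(G))\longrightarrow G,\qquad (g)\longmapsto g.
$$

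Finally, surjectivity is immediate, since every $g\in G$ is hit by the generator $(g)$; hence $G$ is a quotient of $\mathcal{G}(\partial\mathrsfs{C}(G))$. The same argument, verbatim, applies to $\mathcal{G}(\partial\widetilde{\mathrsfs{C}}(G))$, using the identical first-level formula $(x)\circ\mathbf{g}=\mathbf{gx}$. There is no real obstacle here beyond the computational verification of the first-level action; the whole statement is essentially the observation that the first level of the self-similar action recovers the right regular representation of $G$.
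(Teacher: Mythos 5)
Your proof is correct and takes essentially the same route as the paper: both arguments rest on the observation that the first-level action of the dual group on $\mathbf{G}$ is the right regular representation $\mathbf{g}\mapsto\mathbf{gx}$, the paper phrasing this via relations being loops in the Cayley graph of $G$ (Lemma~\ref{lem:prefragilita}) while you package it directly as the first-level permutation representation composed with $\rho^{-1}$. The only quibble is notational: given the paper's convention $\mathrsfs{C}(G)=(\mathbf{G},(G),\circ,\cdot)$, the transition action of the Cayley machine is written $\mathbf{e}\circ(x)$ rather than $\mathbf{e}\cdot(x)$, but this does not affect the argument.
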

\begin{proof}
It is enough to observe that if $u$ is a relation in $\mathcal{G}(\partial\mathrsfs{C}(G)$ or $\mathcal{G}(\partial\widetilde{ \mathrsfs{C}}(G))$, then $u$ is a loop in the Cayley graph of $G$ with set of generators $G$. This is equivalent to say that $u$ is trivial in $G$ (see Lemma \ref{lem:prefragilita}).
\end{proof}

As an example consider the 0-transition Cayley machine $\mathrsfs{C}(\mathbb{Z}_n)=(\mathbf{Z_{n}},(\mathbb{Z}_n),\circ,\cdot)$, with $n\in \mathbb{N}$ (see Fig. \ref{Fig: automata cayley 3}). As usual $\mathbb{Z}_n=\{0,1,\ldots, n-1\}$ with operation $i+j=i+j \textit{ mod }n$ and for $i\neq 0$ we denote by $-i$ the unique element $n-i$ in $\mathbb{Z}_n$. In this case we export the additive notation to the automaton, in order to get the following transitions
\begin{itemize}
\item $\mathbf{k}\vlongfr{(i)}{(i)}\mathbf{k+i}$ for all $k,i\in \mathbb{Z}_{n}$ such that $k\neq -i$;
\item $\mathbf{k}\vlongfr{(-k)}{(0)}\mathbf{0}$ for all $k\in \mathbb{Z}_{n}$.
\end{itemize}
In what follows we see which conditions should be satisfied by a minimal relation in this special case.
Given any word $u\in \widetilde{(\mathbb{Z}_n)}^{\ast}$, this is given by a sequence $u=(u_1)\cdots (u_m)$ where $(u_i)\in \widetilde{(\mathbb{Z}_n)}$ or, equivalently, by the word $u=(v_1)^{e_1}\cdots (v_m)^{e_m}$, where $(v_i)\in (\mathbb{Z}_n)$ and $e_i\in \{-1,+1\}$. By Lemmata \ref{lem:prefragilita} and \ref{lem:fragilita} $u$ is a relation of minimal length if and only if
\begin{enumerate}
  \item $\sum e_iv_i=0\textrm{ mod }n$
  \item for $\mathbf{h}\in\mathbf{Z_{n}}$ either $\mathbf{h}\cdot u=u$ or $\overline{\epsilon_{(e)}(\mathbf{h}\cdot u)}=1$.
\end{enumerate}

The first condition ensures that $u$ is a relation in $\mathbb{Z}_n$, the second one takes into account the fact that if we process $u$, and some letter $(u_i)$ is changed, then it is replaced by $(e)$ (or by $(e)^{-1}$). With any $u\in \widetilde{(\mathbb{Z}_n)}^{\ast}$ one can associate a (finite) set $S(u)$ of elements of $\mathbb{Z}_n$ defined by $S(u):=\{s_i(u): \ i=1,\ldots, m\}$, where $s_j(u)= \sum_{i=1}^{j} e_iv_i\textrm{ mod }n$. Given $u$, we consider the map $\phi_u:\{1,2,\ldots, |u|\}\rightarrow \mathbb{Z}_n$, such that $\phi_u(j)=s_j(u)$. Let $s_j(u)=q\in\mathbb{Z}_n$, and let $\textbf{p}$ the state of $\mathrsfs{C}(\mathbb{Z}_n)$ corresponding to the element $-q$, then for every $k\in \phi_u^{-1}(q)$ one gets one of the following four cases
\begin{itemize}
  \item if $e_k=e_{k+1}=1$, then
  $$
(\textbf{p}\circ (v_1)^{e_1}\cdots (v_{k-1})^{e_{k-1}})\cdot (v_k)=(e)
$$
$$
(\textbf{p}\circ (v_1)^{e_1}\cdots (v_k))\cdot (v_{k+1})=(v_{k+1})
$$
  \item if $e_k=1$ and $e_{k+1}=-1$, then
  $$
(\textbf{p}\circ (v_1)^{e_1}\cdots (v_{k-1})^{e_{k-1}})\cdot (v_k)=(e)
$$
$$
(\textbf{p}\circ (v_1)^{e_1}\cdots (v_k))\cdot (v_{k+1})^{-1}=(e)
$$
  \item if $e_k=-1$ and $e_{k+1}=1$, then
  $$
(\textbf{p}\circ (v_1)^{e_1}\cdots (v_{k-1})^{e_{k-1}})\cdot (v_k)^{-1}=(v_k)^{-1}
$$
$$
(\textbf{p}\circ (v_1)^{e_1}\cdots (v_{k})^{-1})\cdot (v_{k+1})=(v_{k+1})
$$
  \item if $e_k=e_{k+1}=-1$, then
$$
(\textbf{p}\circ (v_1)^{e_1}\cdots (v_{k-1})^{e_{k-1}})\cdot (v_k)^{-1}=(v_k)^{-1}
$$
$$
(\textbf{p}\circ (v_1)^{e_1}\cdots (v_{k})^{-1})\cdot (v_{k+1})^{-1}=(e)
$$
\end{itemize}
If $u$ is a relation of minimal length (then $\overline{\epsilon_{(e)}(\textbf{p}\cdot u)}$ is trivial), the cancellation of occurrences $(x)(x^{-1})$ is performed around at least one index $k\in \phi_u^{-1}(q)$, for some $q\in S(u)$. The only possibility can occur either for $e_k=1, e_{k+1}=-1$ or $e_k=-1, e_{k+1}=1$. Despite the easy combinatorial properties of the relations of minimal length, it seems to be hard to get an explicit description and a simply geometrical interpretation of such relations. From the computational point of view, by using the GAP package AutomGrp developed by Y. Muntyan and D. Savchuk \cite{MuSav14} we have not been able to find non-trivial relations for such examples of groups. This suggests that, if there are relations, they are very long with respect to the size of the generating set.

\section{Open Problems}
We give a list of open problems
\begin{prob}
 Is there a invertible transducer which is not bireversible (possibly with sink) defining a free non-abelian group acting transitively on the associated tree?
\end{prob}
\begin{prob}
Is there a characterization of strongly fragile words? Does the geometric argument described in Section \ref{sec: with sink} can be extended to give a full description of strongly fragile words?
\end{prob}
\begin{prob}
The groups generated by the dual of the 0-transition Cayley machines have exponential growth (see Section \ref{section: cayley}). What can be said about the amenability of such groups? More generally, is it possible to find a suitable output-coloring of such transducers in order to get free groups or free products of groups? This question can be specialized for the example of Cayley machines, where $G=\mathbb{Z}_n$. Are the groups generated by dual of 0-transition Cayley machine $\mathrsfs{C}(\mathbb{Z}_n)$ free? Are the groups generated by dual of 0-transition Cayley machine $\widetilde{\mathrsfs{C}}(\mathbb{Z}_n)$ free products?. In case, does there exists a simple combinatorial description of the relations?
\end{prob}
\begin{prob}
Find a (geometric) characterization of strongly fragile words. Does the shortest strongly fragile on $m$ occurrences have length $3(2^{m-1})-2$? This would imply that the shortest relation on $m$ occurrences of a group $\mathcal{G}(\mathrsfs{A})$ such that $\mathrsfs{S}_{Q}\hookrightarrow_{d} \mathrsfs{A}$, has length greater or equal to $3(2^{m-1})-2$.
\end{prob}
\begin{prob}
Before trying to attack the decidability/undecidability of the algorithmic problem \texttt{FINITE-PERIODIC-ORBITAL} it would be interesting to understand if the following simpler problem is decidable/undecidable:
\begin{itemize}
\item Input: An inverse transducer $\mathrsfs{A}=(Q,\wt{A},\cdot,\circ)$, an element $y\in \wt{Q}^{*}$ with $\oo{y}\neq 1$.
\item output: Is the Schreier graph centered in $y^{\omega}$ finite?
\end{itemize}
\end{prob}
\begin{prob}
Theorems \ref{theo: characterization free}, \ref{theo: finiteness schreier} and Corollary \ref{cor: trans implies positive free} support the idea that the more an automaton  (semi)group is transitive, the more the (semi)group generated by its dual is close to be free. For instance, if an automaton semigroup is transitive is it always true that the semigroup defined by its dual is free?
\end{prob}
\begin{prob} Is it true that no reversible invertible (or bireversible) transducer generates a Burnside group? If there would not be any bireversible transducers generating a Burnside group, then in view of Theorem \ref{theo: burnside} and Corollary \ref{cor: no trivial stabilizers RI not bireversible} examples of automata groups with all trivial stabilizers in the border should be sought outside the class of reversible invertible transducers.
\end{prob}

\bibliographystyle{plain}

\end{document}